\newtheorem{example}{Example}
\newtheorem{definition}{Definition}
\newtheorem{proposition}{Proposition}
\newtheorem{theorem}{Theorem}
\newtheorem{lemma}{Lemma}
\let\footnote=\endnote
\newcommand{\polyopt}{MLP}
\newcommand{\recmcclin}{RML}
\newcommand{\R}{\mathbb{R}}
\newcommand{\B}{\mathbb{B}}
\newcommand{\bfx}{\boldsymbol{x}}
\newcommand{\bfu}{\boldsymbol{u}}
\newcommand{\bfv}{\boldsymbol{v}}
\newcommand{\bfy}{\boldsymbol{y}}
\newcommand{\bfsetV}{\boldsymbol{\cal V}}
\newcommand{\bflambda}{\boldsymbol{\lambda}}
\newcommand{\bfmu}{\boldsymbol{\mu}}
\newcommand{\vhat}{\widehat{v}}
\newcommand{\bfvhat}{\widehat{\bfv}}
\newcommand{\lambdahat}{\widehat{\lambda}}
\newcommand{\muhat}{\widehat{\mu}}
\newcommand{\bflambdahat}{\widehat{\boldsymbol{\lambda}}}
\newcommand{\bfmuhat}{\widehat{\boldsymbol{\mu}}}
\newcommand{\lambdatilde}{\widetilde{\lambda}}
\newcommand{\mutilde}{\widetilde{\mu}}
\newcommand{\bflambdatilde}{\widetilde{\boldsymbol{\lambda}}}
\newcommand{\bfmutilde}{\widetilde{\boldsymbol{\mu}}}
\newcommand{\optlambda}[1]{\lambda^{#1}}
\newcommand{\optmu}[1]{\mu^{#1}}
\newcommand{\optbflambda}[1]{\bflambda^{#1}}
\newcommand{\optbfmu}[1]{\bfmu^{#1}}
\newcommand{\indexset}{J}
\newcommand{\graph}{{\cal G}}
\newcommand{\graphnodes}{{\cal N}}
\newcommand{\grapharcs}{{\cal A}}
\newcommand{\graphnode}{\indexset}
\newcommand{\grapharc}{\mathsf{a}}
\newcommand{\pairarc}{\mathsf{pair}}
\newcommand{\tripleSet}{{\cal T}}
\newcommand{\triple}{\mathsf{t}}
\newcommand{\head}{\mathsf{head}}
\newcommand{\tails}{\mathsf{tails}}
\newcommand{\tailone}{\mathsf{tail1}}
\newcommand{\tailtwo}{\mathsf{tail2}}
\newcommand{\mccormick}{{\mathcal{E}}}
\newcommand{\set}[1]{\left\{#1\right\}}
\newcommand{\nmonomials}{m}
\newcommand{\nvariables}{n}
\begin{document}



\title{Recursive McCormick Linearization of Multilinear Programs}

\author{Arvind U Raghunathan \\
Mitsibushi Electric Research Laboratories, Cambridge, MA 02139
\and
Carlos Cardonha \\
University of Connecticut, Storrs, CT 06269 
\and
David Bergman \\
University of Connecticut, Storrs, CT 06269 \\
\and
Carlos J Nohra \\
Amadeus, Irving, TX 75062
} 
\date{}
\maketitle

\abstract{Linear programming (LP) relaxations are widely employed in exact solution methods for multilinear programs (\polyopt).  One example is the family of Recursive McCormick Linearization (\recmcclin) strategies, where bilinear products are substituted for artificial variables, which deliver a relaxation of the original problem when introduced together with  concave and convex envelopes. In this article, we introduce the first systematic  approach for identifying~\recmcclin{}s, in which we focus on the identification of linear relaxation with a small  number of artificial variables and with strong LP bounds. We present a novel mechanism for representing all the possible \recmcclin{}s, which we use to design an exact  mixed-integer programming (MIP) formulation for the identification of minimum-size \recmcclin{}s; we show that this problem is NP-hard in general, whereas a special case is  fixed-parameter tractable. Moreover, we explore structural properties of our formulation to derive an exact MIP model that identifies~\recmcclin{}s of a given size with the best possible relaxation bound is optimal. Our numerical results on a collection of benchmarks indicate that our algorithms outperform the~\recmcclin{} strategy implemented in state-of-the-art global optimization solvers.}%

\section{Introduction}

This article introduces new techniques for linearizing multilinear terms in optimization problems. We focus on  unconstrained \emph{multilinear programs} (\polyopt)  defined over~$\Omega = [0,1]^n$ or~$\Omega = \{0,1\}^n$, where~$n$ is the number of variables.
An~\polyopt{} is formulated as  
\begin{equation}
\begin{aligned}
    \min\limits_{\bfx \in \Omega}    &\; f(\bfx) 
    = \sum\limits_{i=1}^\nmonomials \alpha_i 
    \prod_{j \in \indexset_i} x_j.
\\
\end{aligned} \label{polyopt}
\end{equation}
We use~$\bfx = (x_1, \ldots, x_\nvariables)$ to denote a vector in~$\Omega$. Function~$f(\bfx)$ consists of~$\nmonomials$ monomials.  Each monomial~$i \in [\nmonomials]$ is composed of  a  coefficient $\alpha_i \in \mathbb{R}$ and a term~$f_i(\bfx) \coloneqq \prod\limits_{j \in \indexset_i} x_j$, i.e., $f_i(\bfx)$ is the product of the variables whose indices are given by a subset~$\indexset_i$ of~$[\nvariables]$. 

\begin{example}\label{example}  Consider the~\polyopt{} $\min\limits_{\bfx \in [0,1]^4}  f(\bfx) = x_1 x_2 x_3 - x_2 x_3 x_4 - x_1 x_3 x_4$.
 This~\polyopt{} consists of~$m = 3$ monomials, which are defined over
 $n = 4$  variables with domain~$[0,1]$. The first monomial of~$f(\bfx)$, represented by~$\alpha_1 f_1(\bfx) = x_1 x_2 x_3$, is described by the coefficient~$\alpha_1 = 1$ and $\indexset_1 = \set{1,2, 3}$, which gives the term~$f_1(\bfx) = x_1 x_2 x_3$.
\end{example}

Exact methods for solving nonlinear programs
typically rely on the derivation of  relaxations of~$f(\bfx)$ (see e.g., \citep{burer2012milp}). 
A popular approach pioneered by \citet{McCormick1976} consists of obtaining a convex relaxation for each monomial~$\alpha_if_i(\bfx)$. This approach can be used in the construction of  a linear programming relaxation of an~\polyopt{} and is employed in state-of-the-art global optimization solvers, such as \texttt{BARON}~(\cite{Sahinidis1996}), \texttt{Couenne}~(\cite{BeLeLiMaWa08}); see also e.g., \citet{FloudasViswa93} and~\citet{SmithPantelides99}. The main idea is to sequentially replace each bilinear product in the~\polyopt{} by an auxiliary variable, which is 
connected with the components of the bilinear product 
through channeling constraints, such as McCormick inequalities (\cite{McCormick1976}), to yield a relaxation of the original problem. By iteratively applying such operations, one can linearize and solve the problem by branch and bound. We refer to a linearization strategy following the iterative procedure described above as a \emph{Recursive McCormick Linearization} (\recmcclin).  
The number of auxiliary variables and the quality of the linear programming (LP) relaxation bound of a linearized model vary across different \recmcclin{}s. We illustrate these differences in Example~\ref{example2}.

\begin{example}\label{example2}
Figures~\ref{ex:rml1} and~\ref{ex:rml2}
depict two~\recmcclin{}s for the~\polyopt{} shown in Example~\ref{example}.
 \begin{figure}[htp]
\begin{minipage}{.47\linewidth}
\centering
\begin{eqnarray*}
    \underbrace{x_1 x_2}x_3 
        &\quad 
    \underbrace{x_2 x_3} x_4 
        &\quad 
    \underbrace{x_1 x_3} x_4 \\
    \underbrace{y_{\{1,2\}} x_3}
        &\quad 
    \underbrace{y_{\{2,3\}} x_4}
        &\quad 
    \underbrace{y_{\{1,3\}}x_4} \\
    y_{\{1,2,3\}}
        &\quad 
    y_{\{2,3,4\}}
        &\quad 
    y_{\{1,3,4\}}
\end{eqnarray*}
\caption{\recmcclin{} with 10 variables and LP bound~$\frac{-4}{3}$.}
\label{ex:rml1}
\end{minipage}%
  \hfill
\begin{minipage}{.47\linewidth}
\centering
\begin{eqnarray*}
    x_2 \underbrace{x_1 x_3}
        &\quad 
    x_2 \underbrace{x_3 x_4}
        &\quad 
    x_1 \underbrace{x_3 x_4} \\
    \underbrace{x_2 y_{\{1,3\}}}
        &\quad 
    \underbrace{x_2 y_{\{3,4\}}}
        &\quad 
    \underbrace{x_1 y_{\{3,4\}}} \\
    y_{\{1,2,3\}}
        &\quad 
    y_{\{2,3,4\}}
        &\quad 
    y_{\{1,3,4\}}
    \end{eqnarray*}
\caption{\recmcclin{} with 9 variables and LP bound~$-1$.}
\label{ex:rml2}
\end{minipage}
\end{figure} 
The~\recmcclin{} in Figure~\ref{ex:rml1} uses ten variables in total (from which six are artificial variables) and delivers an LP bound of~$\frac{-4}{3}$, whereas the~\recmcclin{} in Figure~\ref{ex:rml2} uses five artificial variables and has an LP bound of~$-1$. 
 \end{example}

To the best of authors' knowledge, this article is the first systematic study of~\recmcclin{}s for~\polyopt{}s focused on the number of introduced variables and the relaxation bound of the entire~\polyopt{}, rather than just one of its monomials. In particular, we show exact approaches for the identification of~\recmcclin{}s that have
 (1) a small number of auxiliary variables; or (2)  a tight LP relaxation bound.  Branch-and-bound algorithms benefit from (1) because fewer search nodes need to be explored  and from (2) because tigher relaxations typically result in more pruning  during the solution process. Therefore, in Example~\ref{example2}, the~\recmcclin{} in Figure~\ref{ex:rml2} is preferred over the~\recmcclin{} in Figure~\ref{ex:rml1}.

 Our main contributions can be summarized as follows: 
 \begin{itemize}
    \item \textbf{Problem Definition}: We formalize the study of~\recmcclin{}s as optimization problems with respect to size (number of auxiliary variables introduced by the linearization strategy) and LP bound;
    \item \textbf{Minimum-Size~\recmcclin{}:} We present numerous results about the identification of minimum-size~\recmcclin{}s. We show that the problem is NP-hard even if all monomials have degree at most three; we also present a fixed-parameter tractable algorithm for this special case of the problem. Furthermore, we present a greedy algorithm, which can deliver arbitrarily poor results but typically performs well in practice. Finally, we propose an exact MIP model for finding minimum-size~\recmcclin{}s.
     \item \textbf{Best-Bound \recmcclin{}}: We present an exact MIP model for finding best-bound \recmcclin{}s of any given size. Our results rely on the transformation of a two-level MIP formulation into a single-level MIP based on bounds we derive for dual variables of the inner-level sub-problem.
    \item \textbf{Numerical study}: We compare the performance of our algorithms with the linearization strategies adopted in practice  using benchmark instances that have been traditionally adopted by the global optimization community.
 \end{itemize}

The remainder of this paper is organized as follows. \S\ref{sec:literature} provides an overview of the literature. \S\ref{sec:description} formalizes the problem and introduces the notation used in the paper.
\S\ref{sec:minimumlinearization} and~\S\ref{sec:BB} present our results involving minimum-size~\recmcclin{}s and best-bound~\recmcclin{}s, respectively. \S\ref{sec:computational_results} presents our numerical studies. Finally, \S\ref{sec:Conclusion} concludes the article and discusses directions for future work.

\section{Literature Review}\label{sec:literature}

Multilinear functions appear in a variety of nonconvex optimization problems~\citep{HorstTuy}. In addition, multilinear functions arise when the Reformulation-Linearization Technique~\citep{SheraliAdamsBook} is used to approximate the convex
hull of general classes of mathematical programs, including polynomial optimization problems.  A recent survey by \cite{AhmadiMajumdar2016} presents a number of applications that can be modeled as polynomial optimization problems. 

The construction of convex lower bounding and concave upper bounding functions is key to global optimization of an~\polyopt. A standard approach to solving an~\polyopt{} is to recast~\eqref{polyopt} as
\begin{equation}
\begin{aligned}
    \min\limits_{\bfx \in [0,1]^n, y \in [0,1]^{m} } &\; \sum\limits_{i=1}^m \alpha_i y_{\indexset_i} \\
    \text{s.t.} &\; y_{\indexset_i} = f_i(\bfx) \,\forall\, i = 1,\ldots,m.
\end{aligned} \label{epolyopt}
\end{equation}
The feasible region defined by the nonlinear equalities in~\eqref{epolyopt} are approximated by linear inequalities, in a process that has been termed in the literature as \emph{linearization}.  A popular approach to linearize the nonconvex region defined by $y_{\indexset_i} = f_i(\bfx)$ when the variables are in $\{0,1\}^n$ is to replace it with its convex hull \citep{Glover1974}.  For the case where variables are binary and continuous, the RML  procedure described in the introduction is used to obtain a linearization.  It is known that the McCormick inequalities~\citep{McCormick1976} define the convex hull for a single term when the variables are in $[0,1]^n$~\citep{RyooSahinidis01} or when the bounds are symmetric around zero~\citep{LuedtkeNamzifar2012}.  Global optimization solvers such as \texttt{BARON}~\citep{Sahinidis1996},  \texttt{Couenne}~\citep{BeLeLiMaWa08,couenne}  and other approaches~(\cite{FloudasViswa93,SmithPantelides99}) solve the \polyopt{} by constructing an LP relaxation using a \recmcclin.  However, such a relaxation  is known to be weak for an~\polyopt{}~\citep{LuedtkeNamzifar2012} and can be strictly contained inside the convex hull of the feasible region of~\eqref{epolyopt}.

An explicit characterization of the convex hull of~\eqref{epolyopt} is known to be polyhedral~\citep{Crama1993,Rikun1997,Sherali1997,Floudas00,baronbook,Tawarmalani10}.  However, it is computationally prohibitive to directly incorporate the convex hull characterization in the LP relaxations since the size of the formulation is exponential in $n$. Hence, it is desirable to find a relaxation that combines the strengths of the \recmcclin{}-based LP relaxation and the convex hull-based LP relaxation. A number of articles~\citep{Bao2009,DelPia2018,DelPia2021} derive cutting planes to strengthen the LP relaxation obtained from an~\recmcclin.    
\cite{dpks:20} report improved computational performance from using the cuts identified in~\cite{DelPia2021}  at the root node LP relaxation obtained from full sequential \recmcclin. 

The preceding discussion clearly demonstrates the fundamental role played by \recmcclin{} in the global optimization of \polyopt.  Interestingly, as shown in Example~\ref{example}, a given MLP can yield a wide range of \recmcclin{}s, i.e. linearizations are not necessarily unique. Missing from the literature is a systematic study of how different \recmcclin{}s can be obtained and, more importantly, how one can construct the smallest possible linearization, in terms of the number of introduced variables.  Note that we need $|\indexset_i| -1$ auxiliary variables to linearize a given monomial~$f_i(\bfx)$. However, when considering a polynomial with several terms, a judicious choice of linearization can lead to a significant reduction in the number of auxiliary variables by exploiting commonality in the bilinear terms among the monomials. Unfortunately, a greedy approach does not necessarily yield the best results (see e.g., Example~\ref{example}), so the identification of a minimum-size~\recmcclin{} relies on more sophisticated strategies.

Another aspect that has not been explored is the question of identifying best-bound \recmcclin{}s, i.e.,  \recmcclin{}s that yields the best relaxation bound when the  number of auxiliary variables introduced by the linearization is constrained. 
In a related line of work, \citet{Cafieri2010} and \citet{BelottiCafieri2013} consider different ways of computing convex hulls of a quadrilinear term by exploiting associativity; in particular, they prove that having fewer groupings of longer terms yields tighter convex relaxations. The work of~\cite{SpeakmanLee17}, \cite{SpeakmanYu2017}, \cite{LeeSkipper2018}, and~\cite{SpeakmanAverkov22} study the polyhedral relaxations by comparing the volumes of the resulting relaxations, but do not consider the identification of  best-bound \recmcclin{}s.

%

\section{Linearization of Multilinear Programs}
\label{sec:description}

The typical algorithm for solving an \polyopt, which is commonly employed in solvers, is to sequentially reduce the number of variables in each multilinear term.
Consider any index $i \in [\nmonomials]$  and the corresponding term $f_i(\bfx) = \prod\limits_{j \in \indexset_i} x_j$.
One can reduce the number of variables in this expression through an iterative introduction of artficial variables. First, select any two indices $j_1,j_2 
\in \indexset_i$. 
Then, introduce a variable $y_{\set{j_1, j_2}}$ that corresponds to the bilinear product $x_{j_1} x_{j_2}$ and rewrite~$f_i(\bfx)$ as
\[
f_i(\bfx) = 
y_{\set{j_1, j_2}}  \prod\limits_{j \in \indexset_i \backslash \set{j_1,j_2}} x_j.
\]
The equality above and, in particular, directly expressing $y_{\set{j_1, j_2}} = x_{j_1}  x_{j_2}$ does not eliminate nonlinearity, but we can use McCormick convex and concave envelopes to relax this expression (\cite{McCormick1976}):
\begin{subequations}\label{eq:McCormickEnvelopes}
\begin{align}
     y_{\set{j_1, j_2}} & \geq 0 \\
     y_{\set{j_1, j_2}} &- x_{j_1} - x_{j_2} + 1 \geq 0 \\
     y_{\set{j_1, j_2}} &- x_{j_1}  \leq 0 \\
     y_{\set{j_1, j_2}} &- x_{j_2} \leq 0 
\end{align}
\label{envelopes}
\end{subequations}
 We denote the McCormick inequality system that linearizes the bilinear product~$x_{j_1}  x_{j_2}$ by introducing an auxiliary variable $y_{\set{j_1, j_2}}$ and the convex and concave envelopes in~\eqref{envelopes} as $\mccormick\left(\triple\right)$ with $\triple = \left(x_{j_1},x_{j_2},y_{\set{j_1, j_2}}\right)$.
 This procedure can be recursively applied to the remaining bilinear products of original and artificial variables until~$f_i(\bfx)$ is completely linearized.
 To simplify the notation, we refer to the variable $x_j$ also as $y_{\set{j}}$.  Therefore, the variables in our models are given by~$y_{\indexset}$, where~$\indexset$ is an index set $\indexset \subseteq [\nvariables]$.  


\subsection{Recursive McCormick Relaxation~(\recmcclin{})}

  For any $i \in [\nmonomials]$, let $\graphnodes_i \coloneqq \set{\indexset: \indexset \subseteq \indexset_i, \indexset \neq \emptyset}$ be the family of non-empty subsets of indices of the variables in monomial $i$, and let $\graphnodes = \bigcup\limits_{i \in [\nmonomials]} \graphnodes_i$.  For any~$\indexset''$ in~$\graphnodes$ such that~$|\indexset''| \geq 2$, a \textit{triple}~$\triple = (\indexset,\indexset',\indexset'')$ describes a partition of~$\indexset''$ into two non-empty sets~$\indexset$ and~$\indexset'$ such that 
  $\indexset \cap \indexset' = \emptyset$ and $\indexset \cup \indexset' = \indexset''$. 
  We assume that 
the first two elements of any triple are arranged in lexicographical order.  In this way, we can uniquely define $\tailone(\triple)$, $\tailtwo(\triple)$, and~$\head(\triple)$ as the first, second, and third element of $\triple$, respectively. Finally, let   $\tripleSet_i \coloneqq \{\triple: \head(\triple) \in \graphnodes_i \}$ and $\tripleSet = \bigcup\limits_{i \in [\nmonomials]} \tripleSet_i$ be the set of all possible triples associated with~$\graphnodes_i$ and~$\graphnodes$, respectively, and let~$\tails(\triple) \coloneqq \{\tailone(\triple),\tailtwo(\triple)\}$.

\begin{definition}
\label{def:rmr}
A \emph{Proper Triple Set}
for an~\polyopt{} is a set of triples $T \subseteq \tripleSet$ for which there exists a subset $T' \subseteq T$ satisfying the following conditions:
\begin{enumerate}[align = left,label = \textbf{RMP \arabic*}]
    \item \label{rmpDef1} Every set $\indexset_i$ with $|\indexset_i| > 1$  is the third element of a triple $\triple \in T'$; and
    \item \label{rmpDef2} If a set $\indexset$ such that $|\indexset| > 1$ is the first or second element of a triple $\triple \in T'$, then $\indexset$ is the third element of a different triple in $T'$.
\end{enumerate}
\end{definition}
A proper triple set~$T$ defines a \textit{Recursive McCormick Relaxation} (\recmcclin{}) of an~\polyopt{}  over the set of variables~$y_{\indexset}$ for each~$\indexset$ in~$\{\head(\triple) : \triple \in T\}$ and subject to the convex and concave envelopes of~$\mccormick(\triple)$ associated with each triple~$\triple$ in~$T$. Observe that Condition~\ref{rmpDef1} enforces the linearization of all monomials of two or more variables, and Condition~\ref{rmpDef2} extends the same condition to artificial variables, which always represent the product of two or more original variables.

Given a proper triple set~$T$ for an~\polyopt{}, the associated~\recmcclin{}  is given by
\begin{equation}
\begin{aligned}
   & \min && \sum_{i \in [\nmonomials]} \alpha_i y_{\indexset_i} \\
    & \text{s.t.} && \mccormick(\triple), && \forall \triple \in T  \\
    &&& y_{\indexset} \in [0,1], && \forall \indexset \in \graphnodes.
\end{aligned} \label{polyoptrml}
\end{equation}
Finally, we refer to the size of a~\recmcclin{} as the cardinality of the associated proper triple set~$T$.

\subsection{Full Sequential~\recmcclin{}}\label{sec:sequentiallinearization}

\newcommand{\nonl}{\renewcommand{\nl}{\let\nl\oldnl}}
\newcommand{\pushline}{\Indp}
\newcommand{\popline}{\Indm\dosemic}

Algorithm~\ref{algo:full_sequential} describes the full sequential~\recmcclin{} (\texttt{Seq}), a linearization strategy that is currently used by state-of-the-art global optimization solvers.
\begin{algorithm}
\scriptsize
    

    $T \coloneqq \emptyset$  \quad  \textit{Set of triples}

    \For{$i \in [\nmonomials]$}{
        $A_i \coloneqq \{ \{j\}: j \in \indexset_i  \}$   \quad     \textit{Families of subsets of indices associated with each monomial}
    }
     \While{$\exists A_i: |A_i| > 1$}{

        \textbf{Pick} $\indexset,\indexset' \in A_i$ \textbf{with}  $|A_i| > 1$ \quad         \textit{Select an arbitrary pair of index sets of an arbitrary monomial with $|A_i| > 1$}
        
        $\indexset'' \coloneqq \indexset \cup \indexset' $
        
        $F \coloneqq F \cup \{ \indexset'' \}$
        
        $T \coloneqq T \cup \{(\indexset,\indexset',\indexset'')\}$

        \For{$i' \in [\nmonomials]$ }{
        
            \If{$\{\indexset,\indexset'\} \subseteq A_{i'}$}{
                $A_{i'} \coloneqq A_{i'} \setminus \{\indexset,\indexset'\}$
                
                $A_{i'} \coloneqq A_{i'} \cup \{\indexset''\}$
            }
        }
    }
  \caption{Full Sequential~\recmcclin{} }\label{algo:full_sequential}
\end{algorithm}
\texttt{Seq} is an iterative procedure that, 
in each step, identifies a pair of (original or artificial) variables~$y_{\indexset}$ and~$y_{\indexset'}$ occurring in the same term, where~$\indexset$ and~$\indexset'$ are disjoint subsets of some~$\indexset_i$, and replaces the bilinear product $y_{\indexset} y_{\indexset'}$
for a new auxiliary variable~$y_{\indexset''}$, where~$\indexset'' = \indexset \cup \indexset'$. This substitution is applied to all terms containing both~$y_{\indexset}$ and~$y_{\indexset'}$.
This strategy is termed the \emph{recursive arithmetic interval} in~\cite{RyooSahinidis01}. \texttt{Seq} relies on an (arbitrary) ordering of the variables when deciding on  the bilinear terms that are replaced by auxiliary variables. We show the implications of this behavior in the example below.

\begin{example}  The linearizations  of $f(\bfx) = x_1 x_2 x_3 - x_2 x_3 x_4 - x_1 x_3 x_4$ presented in Figures~\ref{ex:rml1} and~\ref{ex:rml2} can be derived by~\texttt{Seq}. Namely, the linearization in Figure~\ref{ex:rml1} is obtained when \texttt{Seq} adopts the ordering~$(x_1,x_2,x_3,x_4)$, which leads to the substitution of the bilinear terms~$x_1 x_2$, $x_2 x_3$, and $x_1x_3$, in this order. Observe that~$x_2 x_3$ occurs on the first two monomials, but \texttt{Seq} does not do this substitution on both because it replaces~$x_1 x_2$ first. In contrast, the linearization in Figure~\ref{ex:rml2} is derived by \texttt{Seq} based on the ordering~$(x_3,x_4,x_1,x_2)$; first, $x_3 x_4$ is replaced in the last two monomials, and then~$x_1x_3$ is replaced in the first. Therefore, the linearization produced by~\texttt{Seq} is not unique, and as we show in Example~\ref{example2}, both the size and the LP bounds produced by distinct linearizations of~\texttt{Seq} may be different.

\end{example}

\section{Minimum Linearization}\label{sec:minimumlinearization}

Next, we investigate strategies to derive minimum-size~\recmcclin{}s.
In~\S\ref{sec:heuristic} we present a simple greedy approach, which we prove to be suboptimal. In~\S\ref{sec:exact} we  present an exact algorithm to find a minimum-size~\recmcclin{}. Finally, we conclude this section showing that finding a minimum-size~\recmcclin{}  is NP-hard and that a special case  is fixed-parameter tractable.

\subsection{Greedy Linearization}\label{sec:heuristic}

Algorithm~\ref{algo:greedy} describes \texttt{Greedy}, a simple, yet effective, \recmcclin{} strategy that selects in each iteration a pair of (original or artificial) variables~$y_{\indexset}$ and~$y_{\indexset'}$ that appear in as many terms as possible. Then, as in \texttt{Seq}, the bilinear product~$y_{\indexset} y_{\indexset'}$ is replaced in each monomial where it occurs by the artificial variable~$y_{\indexset \cup \indexset'}$. We remark that the main difference between \texttt{Seq} and \texttt{Greedy} is in the selection of pairs; namely, whereas \texttt{Seq} chooses the pairs in an arbitrary way, \texttt{Greedy} tries to reduce as many monomials as possible in each step.
\begin{algorithm}
\scriptsize
\SetAlgoLined
    
    
    $T \coloneqq \emptyset$  \quad  \textit{Set of triples}

    \For{$i \in [\nmonomials]$}{
    
        $A_i \coloneqq \{ \{j\}: j \in \indexset_i  \}$ \quad     \textit{Families of subsets of indices associated with each monomial}
    }
     \While{$\exists A_i: |A_i| > 1$}{
    
        \textbf{Pick $\indexset,\indexset'$ such that $|\{i \in [\nmonomials] : \{ \indexset,\indexset' \} \in A_i\} |$ is maximum} 
        
        $\indexset'' \coloneqq \indexset \cup \indexset' $
        

        $T \coloneqq T \cup \{(\indexset,\indexset',\indexset'')\}$
        
        \For{$i' \in [\nmonomials]$}{
        
            \If{$\{\indexset,\indexset'\} \subseteq A_{i'}$}{
                $A_{i'} \coloneqq A_{i'} \setminus \{\indexset,\indexset'\}$
                
                $A_{i'} \coloneqq A_{i'} \cup \{\indexset''\}$
            }
        }
    }
  \caption{\texttt{Greedy}}\label{algo:greedy}
\end{algorithm}

\texttt{Greedy} frequently performs well,  but worst-case performance can be observed in practice. Example~\ref{ex:vision_greedy} shows why \texttt{Greedy} is outperformed by other linearization strategies on the \texttt{vision} instances, used as benchmark in our experiments (see~\S\ref{sec:computational_results}). More generally, Proposition~\ref{prop:greedy_bad} shows that~\texttt{Greedy} can produce linearizations with  arbitrarily more variables than a minimum-size~\recmcclin{}.

\begin{example}\label{ex:vision_greedy} The \texttt{vision} instances are multilinear polynomials with quadratic, cubic, and quartic terms. The variables represent cells in a  grid, and the quadratic, cubic, and quartic terms are associated with variables forming a diagonal, a right angle, and a square of adjacent cells, respectively. Figure~\ref{ex:visionexample} shows some of the terms in an instance of the problem defined over a 3-by-3 grid. An $n$-by-$n$ instance has $2(n-1)^2$ quadratic terms, $4(n-1)^2$ cubic terms, and $(n-1)^2$ quartic terms.
\newcounter{row}
\newcounter{col}
\newcommand\setrow[3]{
  \setcounter{col}{1}
  \foreach \n in {#1, #2, #3} {
    \edef\x{\value{col} - 0.5}
    \edef\y{2.5 - \value{row}}
    \node[anchor=center] at (\x, \y) {\n};
    \stepcounter{col}
  }
  \stepcounter{row}
}

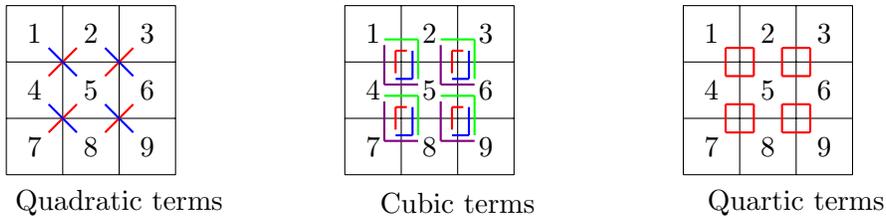
\begin{figure}[ht!]
\centering
\begin{tikzpicture}[scale=0.75]

  \begin{scope}
    \draw (0, 0) grid (3, 3);
    \setrow {1}{2}{3}
    \setrow {4}{5}{6}
    \setrow {7}{8}{9}

    \draw[red,thick] (0.75,0.75) -- (1.25,1.25);
    \draw[blue,thick] (1.25,0.75) -- (0.75,1.25);

    \draw[red,thick] (1.75,0.75) -- (2.25,1.25);
    \draw[blue,thick] (2.25,0.75) -- (1.75,1.25);

    \draw[red,thick] (0.75,1.75) -- (1.25,2.25);
    \draw[blue,thick] (1.25,1.75) -- (0.75,2.25);

    \draw[red,thick] (1.75,1.75) -- (2.25,2.25);
    \draw[blue,thick] (2.25,1.75) -- (1.75,2.25);

    \node[anchor=center] at (2.0, -0.5) {Quadratic terms};
  \end{scope}

    \begin{scope}[xshift=6cm]
        \draw (0, 0) grid (3, 3);
        \setcounter{row}{0}
        \setrow {1}{2}{3}
        \setrow {4}{5}{6}
        \setrow {7}{8}{9}

    \draw[red,thick] (0.9,1.8) -- (0.9,2.2);
    \draw[red,thick] (0.9,2.2) -- (1.1,2.2);    

    \draw[blue,thick] (0.9,1.7) -- (1.2,1.7);
    \draw[blue,thick] (1.2,1.7) -- (1.2,2.2);

    \draw[green,thick] (0.7,2.4) -- (1.3,2.4);
    \draw[green,thick] (1.3,2.4) -- (1.3,1.7);    

    \draw[violet,thick] (0.7,2.3) -- (0.7,1.6);
    \draw[violet,thick] (0.7,1.6) -- (1.3,1.6);

    \draw[red,thick] (0.9,0.8) -- (0.9,1.2);
    \draw[red,thick] (0.9,1.2) -- (1.1,1.2);    

    \draw[blue,thick] (0.9,0.7) -- (1.2,0.7);
    \draw[blue,thick] (1.2,0.7) -- (1.2,1.2);

    \draw[green,thick] (0.7,1.4) -- (1.3,1.4);
    \draw[green,thick] (1.3,1.4) -- (1.3,0.7);    

    \draw[violet,thick] (0.7,1.3) -- (0.7,0.6);
    \draw[violet,thick] (0.7,0.6) -- (1.3,0.6);

    \draw[red,thick] (1.9,1.8) -- (1.9,2.2);
    \draw[red,thick] (1.9,2.2) -- (2.1,2.2);    

    \draw[blue,thick] (1.9,1.7) -- (2.2,1.7);
    \draw[blue,thick] (2.2,1.7) -- (2.2,2.2);

    \draw[green,thick] (1.7,2.4) -- (2.3,2.4);
    \draw[green,thick] (2.3,2.4) -- (2.3,1.7);    

    \draw[violet,thick] (1.7,2.3) -- (1.7,1.6);
    \draw[violet,thick] (1.7,1.6) -- (2.3,1.6);

    \draw[red,thick] (1.9,0.8) -- (1.9,1.2);
    \draw[red,thick] (1.9,1.2) -- (2.1,1.2);    

    \draw[blue,thick] (1.9,0.7) -- (2.2,0.7);
    \draw[blue,thick] (2.2,0.7) -- (2.2,1.2);

    \draw[green,thick] (1.7,1.4) -- (2.3,1.4);
    \draw[green,thick] (2.3,1.4) -- (2.3,0.7);    

    \draw[violet,thick] (1.7,1.3) -- (1.7,0.6);
    \draw[violet,thick] (1.7,0.6) -- (2.3,0.6);





        \node[anchor=center] at (2.0, -0.5) {Cubic terms};
  \end{scope}

    \begin{scope}[xshift=12cm]
        \draw (0, 0) grid (3, 3);
        \setcounter{row}{0}
        \setrow {1}{2}{3}
        \setrow {4}{5}{6}
        \setrow {7}{8}{9}

    \draw[red,thick] (0.75,1.75) -- (0.75,2.25);
    \draw[red,thick] (0.75,2.25) -- (1.25,2.25);    
    \draw[red,thick] (1.25,2.25) -- (1.25,1.75);
    \draw[red,thick] (0.75,1.75) -- (1.25,1.75);

    \draw[red,thick] (0.75,0.75) -- (0.75,1.25);
    \draw[red,thick] (0.75,1.25) -- (1.25,1.25);    
    \draw[red,thick] (1.25,1.25) -- (1.25,0.75);
    \draw[red,thick] (0.75,0.75) -- (1.25,0.75);

    \draw[red,thick] (1.75,1.75) -- (1.75,2.25);
    \draw[red,thick] (1.75,2.25) -- (2.25,2.25);    
    \draw[red,thick] (2.25,2.25) -- (2.25,1.75);
    \draw[red,thick] (1.75,1.75) -- (2.25,1.75);

    \draw[red,thick] (1.75,0.75) -- (1.75,1.25);
    \draw[red,thick] (1.75,1.25) -- (2.25,1.25);    
    \draw[red,thick] (2.25,1.25) -- (2.25,0.75);
    \draw[red,thick] (1.75,0.75) -- (2.25,0.75);

        \node[anchor=center] at (2.0, -0.5) {Quartic terms};
  \end{scope}
\end{tikzpicture}
\caption{All terms in a 3-by-3 example of the \texttt{vision} instances, distinguished by color.} 
\label{ex:visionexample}
\end{figure}
A minimum-size~\recmcclin{} has  one artificial variable per term, starting with the quadratic ones and then proceeding with the cubic and quartic terms.
In contrast, \texttt{Greedy} adds an artificial variable~$y_{\{i,i+1\}}$ for each~$1 \leq i \leq n^2$ such that~$i \mod n \neq 0$ first, representing pairs of cells that appear in the cubic and quartic terms but do not in the quadratic terms. \texttt{Greedy} still needs to add one artificial variable for each term, so the first batch of artificial variables is added in addition to the same number of variables used by a minimum-size~\recmcclin{}.
\end{example}

\begin{proposition}\label{prop:greedy_bad}
The~\recmcclin{} identified by \texttt{Greedy} can be arbitrarily larger than a minimum-size~\recmcclin{}.
\end{proposition}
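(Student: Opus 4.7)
The plan is to exhibit an infinite family of instances, indexed by a size parameter $n$, on which the gap between the size of the \recmcclin{} produced by \texttt{Greedy} and that of a minimum-size \recmcclin{} grows without bound. The \texttt{vision} family from Example~\ref{ex:vision_greedy} is the natural candidate, and I would formalize the informal argument sketched there.

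First, I would establish a tight bound of $7(n-1)^2$ on the size of any minimum-size \recmcclin{} for the $n \times n$ \texttt{vision} instance. The lower bound is immediate from Condition~\ref{rmpDef1}: each of the $2(n-1)^2$ quadratic, $4(n-1)^2$ cubic, and $(n-1)^2$ quartic monomials must be the head of a triple, and since distinct monomials have distinct index sets, at least $7(n-1)^2$ distinct triples are required. The matching upper bound follows from an explicit construction based on diagonals: within each $2 \times 2$ subgrid, introduce the two diagonal auxiliary variables (which are exactly the quadratic monomials), linearize each right-angle cubic monomial as the product of the diagonal variable it contains with the remaining cell, and linearize each quartic square monomial as the product of its two diagonal variables. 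This yields exactly one auxiliary variable per monomial, so an optimal \recmcclin{} consists of precisely the monomial heads and uses no edge variables.

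Next, I would analyze \texttt{Greedy}. A direct count of the monomials containing each candidate pair shows that horizontally and vertically adjacent pairs of cells appear in strictly more monomials than diagonal pairs: an interior edge pair occurs in $4$ cubic and $2$ quartic monomials, whereas a diagonal pair occurs in $1$ quadratic, $2$ cubic, and $1$ quartic monomial. Consequently, \texttt{Greedy} selects edge pairs in its initial iterations and commits at least $n(n-1)$ such auxiliary variables (as made explicit in Example~\ref{ex:vision_greedy}), none of which are used by any minimum-size \recmcclin{} by the uniqueness established above. After this initial phase, each diagonal quadratic monomial still consists of two original variables disjoint from every edge pair already introduced, so it requires a fresh auxiliary variable; each cubic and quartic monomial, being a monomial head, still requires at least one further auxiliary variable. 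Summing yields at least $n(n-1) + 7(n-1)^2$ auxiliary variables for \texttt{Greedy}, so the gap is at least $n(n-1)$, which is unbounded in $n$.

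The main obstacle is to formalize \texttt{Greedy}'s selection rule, particularly tie-breaking: several pairs may tie for the maximum coverage at any iteration, and the downstream consequences depend on which tie is broken. I plan to sidestep this by adopting an adversarial tie-breaking rule consistent with Algorithm~\ref{algo:greedy} (which suffices for the worst-case claim of the proposition), and to verify that no reuse of the initial edge variables can reduce the subsequent auxiliary count for the diagonal quadratic monomials, since diagonal pairs share no variable with any edge pair. A secondary subtlety is to argue that the frequency ranking remains consistent as \texttt{Greedy} proceeds through its initial edge-picking phase; this follows because introducing an edge variable only reduces the counts of pairs that coincide with that edge, not of the remaining edge pairs that cover distinct subsets of monomials.
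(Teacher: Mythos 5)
Your route is genuinely different from the paper's, and it is weaker in a way that matters. The paper does not formalize the \texttt{vision} example; instead it builds, via the dominating-set formulation of the 3-\polyopt{} (Section~\ref{sec:dominatingset}) and the gadget from the proof of Theorem~\ref{thm:nphard}, the classical worst-case family for greedy covering: a bipartite graph $B=(U,V,E)$ with $|U|=k$ and $V$ partitioned into classes $V_i$ of size $\lfloor k/i\rfloor$, on which \texttt{Greedy} is driven to select all of $V$, of size $\Theta(k\ln k)$, while the optimum selects the $k$ pairs associated with $U$. That yields an unbounded \emph{multiplicative} gap of order $\ln k$. Your \texttt{vision}-based argument, even if completed, gives \texttt{Greedy} at most roughly $n(n-1)$ extra triples on top of the optimum $7(n-1)^2$, i.e.\ an arbitrarily large \emph{additive} gap but a ratio bounded by a constant (consistent with the paper's observation that \texttt{Greedy} uses about $15\%$ more variables on \texttt{vision}). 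Under the reading of ``arbitrarily larger'' that the paper itself establishes --- \texttt{Greedy} is not within any constant factor of optimal --- your construction does not prove the proposition; it only reproves (a formalized version of) Example~\ref{ex:vision_greedy}.

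There is also a concrete flaw inside your argument for \texttt{Greedy}'s trajectory. The claim that ``introducing an edge variable only reduces the counts of pairs that coincide with that edge, not of the remaining edge pairs'' is false: when \texttt{Greedy} replaces an interior edge pair, every overlapping pair loses occurrences in the monomials where the substitution is performed --- other edges sharing a cell of that edge drop (e.g.\ from $6$ to $4$), and the diagonals of the affected subgrids drop from $4$ to $2$ --- so the frequency ranking does not stay fixed, and boundary edges (initial count $3$, below a diagonal's $4$) are never maximal until after these interactions. With adversarial tie-breaking one can likely still force \texttt{Greedy} to commit $\Theta(n^2)$ edge variables, but this requires an explicit induction on the evolving counts that your sketch does not supply. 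The part of your proposal that is solid is the lower/upper bound $7(n-1)^2$ for the minimum (each monomial must be a head, heads are distinct, and the diagonal-based construction attains it), and the observation that no edge can be a head in any minimum-size \recmcclin{}; but to prove the proposition as the paper intends, you need an instance family on which the ratio, not just the difference, diverges --- which is exactly what the set-cover-style construction provides.
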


\subsection{Exact Model}\label{sec:exact}

Next, we present an exact MIP formulation for the identification of minimum-size~\recmcclin{}s, whose feasible solutions form a bijection with the collection of~\recmcclin{}s for an arbitrary~\polyopt{}. 
\begin{subequations}
\begin{align}
    \min
    &\; \sum_{\triple \in \tripleSet} v_\triple \label{minlin.obj} \\
    \text{s.t.} 
    &\; \sum\limits_{\triple \in \tripleSet_i : \head(\triple) = \indexset_i}  u_{i,\triple} = 1 &\,\forall\, i \in [\nmonomials] \textnormal{ with } |\indexset_i| > 1,  \label{minlin.con1} \\
    &\; \sum_{\triple \in \tripleSet_i : \head(\triple) = \graphnode} u_{i,\triple} 
    = \sum_{\triple \in \tripleSet_i : \graphnode \in \tails(\triple) } u_{i,\triple} &\,\forall\, i \in [\nmonomials], \forall\, \graphnode \in \graphnodes_i : 2 \leq |\graphnode| < |\indexset_i|,
    \label{minlin.con2} \\
    &\; u_{i,\triple} \leq v_{\triple} &\,\forall\, \triple \in \tripleSet_i, i \in [\nmonomials], \label{minlin.con3} \\
    &\; \bfu_i \in \B^{|\tripleSet_i|} &\,\forall\, i \in [\nmonomials], \label{minlin.con4} \\
    &\; \bfv \in \B^{|\tripleSet|} & \label{minlin.con5}
\end{align} \label{minlin}
\end{subequations}
The variables of our model are $\bfu_i \in \B^{|\tripleSet_i|}$ and $\bfv \in \B^{|\tripleSet|}$, where:
\begin{itemize}
    \item $u_{i,\triple}$ 
    indicates whether triple $\triple$ is used in the exact linearization of monomial $i$; and
    \item $v_\triple$ 
    indicates whether the triple $\triple$ can be used in the exact linearization of any monomial $i \in [\nmonomials]$.
\end{itemize}
These variables are used to represent the selection of the triples composing a proper triple set~$T$. Variables~$u_{i,\triple}$ are used to select a linearization of monomial~$i$, and variables~$v_\triple$ indicate that a triple  belongs to $T$ and can therefore be used in the linearization of one or more monomials in~$[m]$. 

The constraints~\eqref{minlin.con1}-\eqref{minlin.con2} model conditions~\ref{rmpDef1} and~\ref{rmpDef2}, respectively. Namely, if the index set~$\indexset_{i}$ of monomial~$i$ containing two or more elements, then~$\head(\triple) = \indexset_{i}$ for at least one  triple used in the linearization of~$i$. Similarly, if some index set~$\indexset$ containing two or more elements is such that~$\indexset \in \tails(\triple)$ for some selected triple~$\triple$, then there must exist another selected triple~$\triple'$ such that~$\indexset = \head(\triple')$. The constraints~\eqref{minlin.con3} couple variables~$u_{i,\triple}$ and~$v_\triple$, i.e., if we use~$\triple$ in the linearization of some triple~$\triple$, then we must set~$v_\triple$ to one. Finally, the objective function~\eqref{minlin.obj} minimizes the number of activated triples across the entire~\polyopt{}.

Finally, observe that the variables~$v_\triple$ are necessary in our formulation because a triple~$\triple = (\indexset,\indexset',\indexset'')$ used in the linearization of a monomial~$i$ may not be used in the linearization of another monomial~$i'$ even if both~$\indexset$ and~$\indexset'$ belong to~$\indexset_{i'}$. Observe that this is in agreement with the definition of proper triple sets (see Definition~\ref{def:rmr}), which allows a triple set~$T$ to contain not only a subset that establishes conditions~\ref{rmpDef1} and~\ref{rmpDef2}, but also to include other triples.

\subsection{Complexity and Tractability Results}\label{sec:complexity}

\newcommand{\trinomials}{\mathbb{V}}
\newcommand{\trinomial}{v}
\newcommand{\pairs}{\mathbb{U}}
\newcommand{\pair}{u}

This section investigates the computational complexity of identifying minimum-size~\recmcclin{}s. We restrict our attention to the~3-\polyopt{}s, a special case where all monomials have degree at most 3. We show that finding a minimum-size~\recmcclin{} the 3-\polyopt{} is NP-hard, but also fixed-parameter tractable.


\subsubsection{Dominating Set Formulation of the 3-\polyopt{}}\label{sec:dominatingset}

Let~$f(\bfx)$ be a~3-\polyopt{} with a monomial~$f_i(\bfx) = x_{j}x_{k}  x_{l}$. Any~\recmcclin{} of~$f_i(\bfx)$ must contain one triple $\triple = (x_{j'},x_{k'},y_{\set{j',k'}})$ for some~$\{j',k'\} \subset \{j,k,l\}$, $j' \neq k'$, and one triple~$\triple' = (x_{l'}, y_{\set{j',k'}}, y_{\set{j',k',l'}} )$, $l' \in \{j,k,l\} \setminus \{j',k'\}$, where the first represents the creation of an artificial variable~$y_{\set{j',k'}}$ that linearizes the product of (any) two variables~$x_{j'}$ and~$x_{k'}$ of~$\indexset_i$, and the second linearizes  the product of~$y_{\set{j',k'}}$ and the third  variable~$x_{l'}$. Based on this observation, we can cast an instance~$I'$ of the 3-\polyopt{} as an instance~$I$ of a variation of the dominating set problem over a bipartite graph~$G = (\pairs,\trinomials,E)$. 

Each vertex~$\pair$ of~$\pairs$ is associated with an index set~$\indexset_{\pair}$ that contains with two elements of~$[n]$, and each vertex~$\trinomial$ is associated with an index set~$\indexset_{\trinomial} = \indexset_i$ of some monomial~$i$. We adopt set-theoretical notation to represent the relationships between the elements of~$\pairs$ and~$\trinomials$ based on their associated index sets. For example, $\pair \cap \pair' = \emptyset$ if~$\indexset_\pair \cap \indexset_\trinomial = \emptyset$.
Set~$E$ contains an edge~$(\pair,\trinomial)$ if and only if~$\indexset_\pair \subset \indexset_\trinomial$. For any vertex~$\trinomial$ of~$\trinomials$, we say that the vertices in~$\pairs(\trinomial) \coloneqq \{\pair \in \pairs: (\pair,\trinomial) \in E\}$ \textit{cover}~$\trinomial$. 
The identification of a minimum-size~\recmcclin{} for a~3-\polyopt{} reduces to solving a special case of the dominating set problem on the graph~$G$ constructed as defined above, where \textit{all the dominating vertices must be chosen from~$\pairs$}. See Figure~\ref{fig:dominatingset} for an example of this construction.
\begin{figure}[ht!]
    \centering
        \resizebox{0.45\textwidth}{!}{
\begin{tikzpicture}[main/.style = {draw, circle},square/.style={regular polygon,regular polygon sides=4}]

\node[main] (x1x2)[fill=red] at (0,0) {$x_1  x_2$};
\node[main] (x1x3) at (2,0) {$x_1  x_3$}; 
\node[main] (x2x3) at (4,0) {$x_2  x_3$}; 
\node[main] (x1x4) at (6,0) {$x_1  x_4$};
\node[main] (x2x4) at (8,0) {$x_2  x_4$}; 

\node (x1x2x3) at (3.0,2)[draw] {$x_1 x_2  x_3$};
\node (x1x2x4)[draw] at (6,2) {$x_1  x_2  x_4$};

  \node at (-2.5,2){\textsc{Set $\trinomials$}};
  \node at (-2.5,0){\textsc{Set $\pairs$}};

\path[every node/.style={font=\sffamily\small}]
        (x1x2) edge node [above] {} (x1x2x3)
        (x1x2) edge node [above] {} (x1x2x4)
        (x1x3) edge node [above] {} (x1x2x3)
        (x2x3) edge node [above] {} (x1x2x3)
        (x1x4) edge node [above] {} (x1x2x4)
        (x2x4) edge node [above] {} (x1x2x4)
        ;
\end{tikzpicture} 
}
\caption{Dominating set representation of  $f(\bfx) = x_1 x_2  x_3 + x_1  x_2  x_4$. We have~$\pairs = \{x_1  x_2, x_1  x_3, x_2  x_3, x_1 x_4, x_2  x_4\}$ and
$\trinomials = \{ x_1  x_2  x_3, x_1 x_2  x_4 \}$. Observe that the node associated with~$\indexset_{\set{1,2}}$  covers both nodes in~$\trinomials$. }
    \label{fig:dominatingset}
\end{figure}


\subsubsection{Reduction Rules and NP-hardness}\label{sec:reductionrules}

The connection with the dominating set problem allows us to derive a set of reduction rules to remove elements from~$\pairs$ and~$\trinomials$.
The sequential and iterative application of these rules 
yield a kernelization algorithm, as used
parameterized complexity theory (see e.g., \cite{fomin2019kernelization}). 

\begin{theorem}[Reduction Rules]\label{thm:reductionrules} The application of the following set of rules preserves at least one dominating set in~$G$ associated with a minimum linearization of~$f(\bfx)$:

\begin{enumerate}[align=left,label=\textbf{Rule \arabic*}]
\item \label{rule1} For every~$\trinomial \in \trinomials$ such that~$\trinomial \cap \trinomial' = \emptyset$ for every~$\trinomial' \in \trinomials \setminus \{\trinomial\}$:
\begin{itemize}
    \item Select an arbitrary pair~$\pair \in \pairs(\trinomial)$;
    \item Remove~$\trinomial$ from~$\trinomials$ and all elements of~$\pairs(\trinomial)$ from~$\pairs$. 
\end{itemize}
\item \label{rule2}
Remove all elements of~$\pairs$ of degree 1.
\item \label{rule3} For each element~$\trinomial$ of~$\trinomials$ with a single neighbor~$\pair$, select~$\pair$.
\item \label{rule4} Remove all elements of~$\pairs$ without neighbors.
\item \label{rule5} The problem can be decomposed by its connected components in~$G$.
\end{enumerate}
\end{theorem}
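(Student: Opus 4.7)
The plan is to prove each rule individually defines a \emph{safe} reduction: given an optimal dominating set $D^\star$ in the original bipartite graph $\graph$ (with dominators restricted to $\pairs$), there exists an optimal dominating set in the reduced instance whose union with the pair selections forced by the rule yields an optimal dominating set in $\graph$. Safety of each rule in isolation suffices, because the reduced graph is itself the dominating-set graph associated with some 3-\polyopt{}, so the rules can be applied iteratively to produce a kernel.

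Rules 1, 3, 4, and 5 admit short exchange arguments. For Rule 1, the hypothesis that $\indexset_\trinomial$ is disjoint from $\indexset_{\trinomial'}$ for every $\trinomial' \neq \trinomial$ forces every $\pair \in \pairs(\trinomial)$ to be a 2-subset of $\indexset_\trinomial$ that is not a subset of any other $\indexset_{\trinomial'}$; hence each such $\pair$ has degree one and contributes only to covering $\trinomial$. Any feasible dominating set therefore contains at least one element of $\pairs(\trinomial)$, these choices are interchangeable, and an arbitrary pick $\pair \in \pairs(\trinomial)$ together with deletion of $\trinomial$ and $\pairs(\trinomial)$ preserves optimality. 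Rule 3 is immediate: if $|\pairs(\trinomial)|=1$, the unique covering pair is forced. Rule 4 removes isolated pairs, which are never part of a minimum dominating set because they contribute cost without coverage. Rule 5 is the standard observation that a minimum dominating set in a disconnected graph is the disjoint union of minimum dominating sets of its components.

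The subtle case is Rule 2. Suppose $\pair$ has degree one in the current graph with unique covered trinomial $\trinomial$, and let $D^\star$ be an optimal dominating set with $\pair \in D^\star$. If some other $\pair' \in \pairs(\trinomial)$ also lies in $D^\star$, then $D^\star \setminus \{\pair\}$ still dominates (it drops only the coverage of $\trinomial$, which is still covered by $\pair'$), contradicting optimality. Otherwise, pick any $\pair' \in \pairs(\trinomial) \setminus \{\pair\}$ and set $D' := (D^\star \setminus \{\pair\}) \cup \{\pair'\}$; then $|D'| = |D^\star|$ and $D'$ dominates because $\pair$ only covered $\trinomial$ and $\pair'$ covers $\trinomial$ (and possibly additional trinomials). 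Hence some optimal dominating set avoids $\pair$, justifying its removal from $\pairs$.

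The main obstacle is verifying the precondition $|\pairs(\trinomial)| \geq 2$ required in the second case of Rule 2: if $\pair$ were the only pair covering $\trinomial$, the swap would leave $\trinomial$ uncovered. This is handled by the interaction among the rules — Rule 3 must be applied to exhaustion before Rule 2, so that every remaining $\trinomial$ with $|\pairs(\trinomial)| = 1$ is already eliminated together with its forced selection. Under this application order (Rules 1, 3, 4 before Rule 2, and Rule 5 used for decomposition), each reduction preserves at least one minimum dominating set, and the composed kernelization procedure preserves at least one minimum-size~\recmcclin{} as claimed.
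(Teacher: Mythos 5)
Your treatment of Rules 1, 3, 4 and 5 coincides with the paper's exchange arguments, so the only point of substance is Rule 2, and there your proposal has a genuine gap. Rule 2 deletes \emph{all} degree-one elements of $\mathbb{U}$ at once, so its safety requires a minimum dominating set that avoids \emph{every} degree-one pair simultaneously. Your exchange only shows that, for each individual degree-one pair $u$, some minimum dominating set avoids $u$: the replacement $u' \in \mathbb{U}(v)\setminus\{u\}$ you swap in may itself have degree one, so iterating your exchange need not produce a solution disjoint from the whole removed set. Your proposed remedy --- applying Rule 3 to exhaustion before Rule 2 --- only guarantees $|\mathbb{U}(v)| \geq 2$ for the surviving trinomials; it does not guarantee that any neighbor of $v$ has degree at least two, which is what the swap actually needs in order to land on a pair that survives Rule 2. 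Concretely, take $f(\bfx) = x_1 x_2 x_3 + x_3 x_4 x_5$: the two trinomials intersect, so Rule 1 does not fire; each has three neighbors, so Rule 3 does not fire; yet every pair has degree one, and Rule 2 as you justify it would delete all of $\mathbb{U}$, leaving both trinomials uncoverable, so no dominating set of the original instance is preserved.

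The paper closes exactly this hole by a different mechanism: it argues that after the exhaustive application of Rule 1 every remaining trinomial has at least one neighbor of degree at least two, and its exchange replaces a degree-one neighbor specifically by such a degree-$\geq 2$ neighbor, i.e., by a pair that Rule 2 does not remove; the validity of Rule 2 is thus argued relative to the structure left by Rule 1, not via a reordering with Rule 3. To repair your argument you would need an analogous structural statement identifying, for each trinomial whose coverage your swap disturbs, a pair of degree at least two to swap to; as written, your per-pair exchange plus the Rule-3-first ordering does not supply it.
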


\begin{example}
We illustrate the application of the reduction rules on~$f(\bfx) = x_1   x_2   x_3 + x_4   x_5   x_6 + x_4   x_6   x_8 + x_7   x_8   x_9 + x_8   x_9   x_{10} + x_7   x_9   x_{10}$ in Figure~\ref{fig:reductionexample}. 
\begin{figure}[ht!]
\centering

\subfloat[Dominating set representation of $f(\bfx)$.
    \label{fig:ex1}]{
\resizebox{0.45\textwidth}{!}{
\begin{tikzpicture}[main/.style = {draw, circle},square/.style={regular polygon,regular polygon sides=4}] 
\node (x1x2x3)[,draw] at (0,1) {$x_1   x_2   x_3$};
\node[main] (x1x2) at (-1,2) {$x_1   x_2$}; 
\node[main] (x2x3) at (0,0) {$x_2   x_3$}; 
\node[main] (x1x3) at (1,2) {$x_1   x_3$};

\node (x4x5x6)[,draw] at (4,1) {$x_4   x_5   x_6$};
\node[main] (x4x5) at (3,2) {$x_4   x_5$}; 
\node[main] (x4x6) at (4,0) {$x_4   x_6$}; 
\node[main] (x5x6) at (5,2) {$x_5   x_6$}; 

\node (x4x6x7)[,draw] at (4,-1) {$x_4   x_6   x_7$};

\node[main] (x4x7) at (3,-2) {$x_4   x_7$}; 
\node[main] (x6x7) at (5,-2) {$x_6   x_7$};

\node[main] (x7x8) at (7,2) {$x_7   x_8$}; 
\node[main] (x8x10) at (9,2) {$x_8   x_{10}$}; 
\node[main] (x7x10) at (8,-2) {$x_7   x_{10}$};

\node[main] (x7x9) at (6,0) {$x_7   x_9$}; 
\node[main] (x8x9) at (8,0) {$x_8   x_{9}$}; 
\node[main] (x9x10) at (10,0) {$x_9   x_{10}$};

\node (x7x8x9)[,draw] at (7,1) {$x_7   x_8   x_9$};
\node (x8x9x10)[,draw] at (9,1) {$x_8   x_9   x_{10}$};
\node (x7x9x10)[,draw] at (8,-1) {$x_7   x_9   x_{10}$};

\path[every node/.style={font=\sffamily\small}]
        (x1x2) edge node [above] {} (x1x2x3)
        (x2x3) edge node [above] {} (x1x2x3)
        (x1x3) edge node [above] {} (x1x2x3)
        (x4x5) edge node [above] {} (x4x5x6)
        (x5x6) edge node [above] {} (x4x5x6)
        (x4x6) edge node [above] {} (x4x5x6)
        (x4x6) edge node [above] {} (x4x6x7)
        (x4x7) edge node [above] {} (x4x6x7)
        (x6x7) edge node [above] {} (x4x6x7)
        (x7x8) edge node [above] {} (x7x8x9)
        (x8x10) edge node [above] {} (x8x9x10)
        (x7x9) edge node [above] {} (x7x8x9)
        (x8x9) edge node [above] {} (x7x8x9)
        (x8x9) edge node [above] {} (x8x9x10)
        (x9x10) edge node [above] {} (x8x9x10)
        (x7x9) edge node [above] {} (x7x9x10)
        (x9x10) edge node [above] {} (x7x9x10)
        (x7x10) edge node [above] {} (x7x9x10)
        ;

\end{tikzpicture} 
}
}
\subfloat[Application of Rule 1 to $f(x)$.
    \label{fig:ex2}]{
        \resizebox{0.45\textwidth}{!}        {
\begin{tikzpicture}[main/.style = {draw, circle},square/.style={regular polygon,regular polygon sides=4}]

\node (x1x2x3)[,draw] at (0,1) {$x_1   x_2   x_3$};
\node[main] (x1x2)[fill=gray] at (-1,2) {$x_1   x_2$}; 
\node[main] (x2x3)[fill=red] at (0,0) {$x_2   x_3$}; 
\node[main] (x1x3)[fill=gray] at (1,2) {$x_1   x_3$};

\node (x4x5x6)[,draw] at (4,1) {$x_4   x_5   x_6$};
\node[main] (x4x5) at (3,2) {$x_4   x_5$}; 
\node[main] (x4x6) at (4,0) {$x_4   x_6$}; 
\node[main] (x5x6) at (5,2) {$x_5   x_6$}; 

\node (x4x6x7)[,draw] at (4,-1) {$x_4   x_6   x_7$};

\node[main] (x4x7) at (3,-2) {$x_4   x_7$}; 
\node[main] (x6x7) at (5,-2) {$x_6   x_7$};

\node[main] (x7x8) at (7,2) {$x_7   x_8$}; 
\node[main] (x8x10) at (9,2) {$x_8   x_{10}$}; 
\node[main] (x7x10) at (8,-2) {$x_7   x_{10}$};

\node[main] (x7x9) at (6,0) {$x_7   x_9$}; 
\node[main] (x8x9) at (8,0) {$x_8   x_{9}$}; 
\node[main] (x9x10) at (10,0) {$x_9   x_{10}$};

\node (x7x8x9)[,draw] at (7,1) {$x_7   x_8   x_9$};
\node (x8x9x10)[,draw] at (9,1) {$x_8   x_9   x_{10}$};
\node (x7x9x10)[,draw] at (8,-1) {$x_7   x_9   x_{10}$};

\path[every node/.style={font=\sffamily\small}]
        (x1x2) edge node [above] {} (x1x2x3)
        (x2x3) edge node [above] {} (x1x2x3)
        (x1x3) edge node [above] {} (x1x2x3)
        (x4x5) edge node [above] {} (x4x5x6)
        (x5x6) edge node [above] {} (x4x5x6)
        (x4x6) edge node [above] {} (x4x5x6)
        (x4x6) edge node [above] {} (x4x6x7)
        (x4x7) edge node [above] {} (x4x6x7)
        (x6x7) edge node [above] {} (x4x6x7)
        (x7x8) edge node [above] {} (x7x8x9)
        (x8x10) edge node [above] {} (x8x9x10)
        (x7x9) edge node [above] {} (x7x8x9)
        (x8x9) edge node [above] {} (x7x8x9)
        (x8x9) edge node [above] {} (x8x9x10)
        (x9x10) edge node [above] {} (x8x9x10)
        (x7x9) edge node [above] {} (x7x9x10)
        (x9x10) edge node [above] {} (x7x9x10)
        (x7x10) edge node [above] {} (x7x9x10)
        ;

\end{tikzpicture} 
}
}

\subfloat[Application of Rule 2 to $f(x)$.
    \label{fig:ex3}]{
        \resizebox{0.45\textwidth}{!}{
\begin{tikzpicture}[main/.style = {draw, circle},square/.style={regular polygon,regular polygon sides=4}]

\node (x1x2x3)[,draw] at (0,1) {$x_1   x_2   x_3$};
\node[main] (x1x2)[fill=gray] at (-1,2) {$x_1   x_2$}; 
\node[main] (x2x3)[fill=red] at (0,0) {$x_2   x_3$}; 
\node[main] (x1x3)[fill=gray] at (1,2) {$x_1   x_3$};

\node (x4x5x6)[,draw] at (4,1) {$x_4   x_5   x_6$};
\node[main] (x4x5)[fill=gray] at (3,2) {$x_4   x_5$}; 
\node[main] (x4x6) at (4,0) {$x_4   x_6$}; 
\node[main] (x5x6)[fill=gray] at (5,2) {$x_5   x_6$}; 

\node (x4x6x7)[,draw] at (4,-1) {$x_4   x_6   x_7$};

\node[main] (x4x7)[fill=gray] at (3,-2) {$x_4   x_7$}; 
\node[main] (x6x7)[fill=gray] at (5,-2) {$x_6   x_7$};

\node[main] (x7x8)[fill=gray] at (7,2) {$x_7   x_8$}; 
\node[main] (x8x10)[fill=gray] at (9,2) {$x_8   x_{10}$}; 
\node[main] (x7x10)[fill=gray] at (8,-2) {$x_7   x_{10}$};

\node[main] (x7x9) at (6,0) {$x_7   x_9$}; 
\node[main] (x8x9) at (8,0) {$x_8   x_{9}$}; 
\node[main] (x9x10) at (10,0) {$x_9   x_{10}$};

\node (x7x8x9)[,draw] at (7,1) {$x_7   x_8   x_9$};
\node (x8x9x10)[,draw] at (9,1) {$x_8   x_9   x_{10}$};
\node (x7x9x10)[,draw] at (8,-1) {$x_7   x_9   x_{10}$};

\path[every node/.style={font=\sffamily\small}]
        (x1x2) edge node [above] {} (x1x2x3)
        (x2x3) edge node [above] {} (x1x2x3)
        (x1x3) edge node [above] {} (x1x2x3)
        (x4x5) edge node [above] {} (x4x5x6)
        (x5x6) edge node [above] {} (x4x5x6)
        (x4x6) edge node [above] {} (x4x5x6)
        (x4x6) edge node [above] {} (x4x6x7)
        (x4x7) edge node [above] {} (x4x6x7)
        (x6x7) edge node [above] {} (x4x6x7)
        (x7x8) edge node [above] {} (x7x8x9)
        (x8x10) edge node [above] {} (x8x9x10)
        (x7x9) edge node [above] {} (x7x8x9)
        (x8x9) edge node [above] {} (x7x8x9)
        (x8x9) edge node [above] {} (x8x9x10)
        (x9x10) edge node [above] {} (x8x9x10)
        (x7x9) edge node [above] {} (x7x9x10)
        (x9x10) edge node [above] {} (x7x9x10)
        (x7x10) edge node [above] {} (x7x9x10)
        ;

\end{tikzpicture}
}
}
\subfloat[Application of Rule 3 to $f(x)$.
    \label{fig:ex4}]{
    \resizebox{0.45\textwidth}{!}{
\begin{tikzpicture}[main/.style = {draw, circle},square/.style={regular polygon,regular polygon sides=4}]

\node (x1x2x3)[,draw] at (0,1) {$x_1   x_2   x_3$};
\node[main] (x1x2)[fill=gray] at (-1,2) {$x_1   x_2$}; 
\node[main] (x2x3)[fill=red] at (0,0) {$x_2   x_3$}; 
\node[main] (x1x3)[fill=gray] at (1,2) {$x_1   x_3$};

\node (x4x5x6)[,draw] at (4,1) {$x_4   x_5   x_6$};
\node[main] (x4x5)[fill=gray] at (3,2) {$x_4   x_5$}; 
\node[main] (x4x6)[fill=red] at (4,0) {$x_4   x_6$}; 
\node[main] (x5x6)[fill=gray] at (5,2) {$x_5   x_6$}; 

\node (x4x6x7)[,draw] at (4,-1) {$x_4   x_6   x_7$};

\node[main] (x4x7)[fill=gray] at (3,-2) {$x_4   x_7$}; 
\node[main] (x6x7)[fill=gray] at (5,-2) {$x_6   x_7$};

\node[main] (x7x8)[fill=gray] at (7,2) {$x_7   x_8$}; 
\node[main] (x8x10)[fill=gray] at (9,2) {$x_8   x_{10}$}; 
\node[main] (x7x10)[fill=gray] at (8,-2) {$x_7   x_{10}$};

\node[main] (x7x9) at (6,0) {$x_7   x_9$}; 
\node[main] (x8x9) at (8,0) {$x_8   x_{9}$}; 
\node[main] (x9x10) at (10,0) {$x_9   x_{10}$};

\node (x7x8x9)[,draw] at (7,1) {$x_7   x_8   x_9$};
\node (x8x9x10)[,draw] at (9,1) {$x_8   x_9   x_{10}$};
\node (x7x9x10)[,draw] at (8,-1) {$x_7   x_9   x_{10}$};

\path[every node/.style={font=\sffamily\small}]
        (x1x2) edge node [above] {} (x1x2x3)
        (x2x3) edge node [above] {} (x1x2x3)
        (x1x3) edge node [above] {} (x1x2x3)
        (x4x5) edge node [above] {} (x4x5x6)
        (x5x6) edge node [above] {} (x4x5x6)
        (x4x6) edge node [above] {} (x4x5x6)
        (x4x6) edge node [above] {} (x4x6x7)
        (x4x7) edge node [above] {} (x4x6x7)
        (x6x7) edge node [above] {} (x4x6x7)
        (x7x8) edge node [above] {} (x7x8x9)
        (x8x10) edge node [above] {} (x8x9x10)
        (x7x9) edge node [above] {} (x7x8x9)
        (x8x9) edge node [above] {} (x7x8x9)
        (x8x9) edge node [above] {} (x8x9x10)
        (x9x10) edge node [above] {} (x8x9x10)
        (x7x9) edge node [above] {} (x7x9x10)
        (x9x10) edge node [above] {} (x7x9x10)
        (x7x10) edge node [above] {} (x7x9x10)
        ;

\end{tikzpicture} 
}
}

\caption{Reduction rules applied to $f(x) \coloneqq x_1   x_2   x_3 + x_4   x_5   x_6 + x_4   x_6   x_8 + x_7   x_8   x_9 + x_8   x_9   x_{10} + x_7   x_9   x_{10}$.}
\label{fig:reductionexample}
\end{figure}
The dominating set formulation of~$f(\bfx)$ is depicted in Figure~\ref{fig:ex1}. Nodes of~$\pairs$ incorporated to the optimal solution are shaded in red; eliminated nodes are shaded in gray. The monomial  $f_1(\bfx) = x_1   x_2   x_3$ does not share variables with other monomials, so  we can apply~\ref{rule1} and  select~$x_2   x_3$ to cover $x_1   x_2   x_3$ while excluding~$x_1   x_2$ and~$x_1   x_3$ (see Figure~\ref{fig:ex2}). Next, \ref{rule2} eliminates~$x_4   x_5$, $x_5   x_6$, $x_4   x_7$, $x_6   x_7$, $x_7   x_8$, $x_8   x_{10}$, and $x_7   x_{10}$ (see Figure~\ref{fig:ex3}). Finally, Figure~\ref{fig:ex4} shows the result of~\ref{rule3}, where we select~$x_4   x_6$ to cover both~$x_4   x_5   x_6$ and~$x_4   x_6   x_7$.
\end{example}

\begin{proposition}[Structure of the Kernel]\label{prop:structurereduced}
Let~$G^r = (\pairs^r,\trinomials^r,E^r)$ denote the graph resulting from the exhaustive application of the rules in Theorem~\ref{thm:reductionrules}. $G^r$ satisfies the following properties:
\begin{enumerate}[align=left,label=\textbf{Property \arabic*}]
\item \label{property1}
Each element of~$\trinomials^r$ has 2 or 3 neighbors in~$\pairs^r$.
\item \label{property2}
Each element of~$\pairs^r$ has at least 2 neighbors in~$\trinomials^r$.
\item \label{property3}
$G^r$ is a~$K_{2,2}-$free graphs.
\item \label{property4}
If~$\pair$ is not selected, then any solution must contain one~$\pair'$ for each neighbor of~$\pair$.
\end{enumerate}
\end{proposition}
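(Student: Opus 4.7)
The plan is to verify each of the four properties independently, treating them as direct consequences of either the exhaustion of the reduction rules or of the elementary combinatorics of 2-subsets of a 3-set.

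For Property~\ref{property1}, I would first observe that every trinomial $\trinomial \in \trinomials$ initially has exactly three neighbors in $\pairs$, since $\indexset_\trinomial$ has three distinct 2-element subsets and each gives rise to one pair. The reduction process only deletes vertices, so the degree of $\trinomial$ cannot increase. If at any stage the degree of $\trinomial$ drops to $1$, then~\ref{rule3} is applicable and removes $\trinomial$ along with its sole neighbor; if it drops to $0$, the instance would admit no linearization. Thus every trinomial surviving in $G^r$ has either $2$ or $3$ neighbors. Property~\ref{property2} follows even more directly: since~\ref{rule2} and~\ref{rule4} have been applied to exhaustion, no pair of degree $1$ or $0$ remains.

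For Property~\ref{property3}, the key combinatorial lemma is that any two distinct 2-subsets of a 3-set have union equal to the entire 3-set. Suppose for contradiction that $G^r$ contains a $K_{2,2}$ with parts $\{\pair_1,\pair_2\} \subseteq \pairs^r$ and $\{\trinomial_1,\trinomial_2\} \subseteq \trinomials^r$. Applying the lemma twice, once for $\trinomial_1$ and once for $\trinomial_2$, gives $\indexset_{\trinomial_1} = \indexset_{\pair_1} \cup \indexset_{\pair_2} = \indexset_{\trinomial_2}$, so the two trinomials correspond to identical index sets. Since monomials with identical index sets can be merged into a single entry of $\trinomials$ at preprocessing (their coefficients simply add up), we may assume all trinomials carry distinct index sets, yielding a contradiction.

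For Property~\ref{property4}, fix any feasible selection $S \subseteq \pairs^r$ with $\pair \notin S$. Every neighbor $\trinomial$ of $\pair$ must still be covered, so $S$ must contain at least one element of $\pairs(\trinomial) \setminus \{\pair\}$. These selected pairs are moreover pairwise distinct across different neighbors of $\pair$: if a single $\pair' \in S$ served as the covering pair for two distinct neighbors $\trinomial, \trinomial'$ of $\pair$, then $\{\pair,\pair'\}$ and $\{\trinomial,\trinomial'\}$ would form a $K_{2,2}$ in $G^r$, contradicting Property~\ref{property3}. Hence $|S| \geq \deg_{G^r}(\pair)$, which is the claim.

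The main obstacle I anticipate is formalizing Property~\ref{property3} cleanly. Properties~\ref{property1},~\ref{property2}, and~\ref{property4} reduce to direct bookkeeping on the rules and the covering requirement, but Property~\ref{property3} hinges on the implicit assumption that distinct trinomials have distinct underlying index sets, which should be stated as a benign preprocessing convention. Once that convention is in place, the two-2-subsets-of-a-3-set argument closes the proof and, as a bonus, enables the short derivation of Property~\ref{property4}.
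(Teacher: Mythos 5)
Your proof is correct and follows essentially the same route as the paper: degree counting plus \ref{rule3} for Property~\ref{property1}, \ref{rule2} for Property~\ref{property2}, the two-2-subsets-of-a-3-set observation for Property~\ref{property3}, and deriving Property~\ref{property4} from Property~\ref{property3}. Your only addition is making explicit the (implicit in the paper) convention that distinct elements of $\trinomials$ carry distinct index sets, which is a reasonable clarification rather than a different argument.
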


The structural results presented in Proposition~\ref{prop:structurereduced} allow us to derive a connection  of the 3-\polyopt{} with the vertex cover problem. We explore this connection to show that \polyopt{} is NP-hard. 
\begin{theorem}\label{thm:nphard}
The minimum linearization of 3-\polyopt{} is NP-hard.
\end{theorem}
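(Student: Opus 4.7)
The plan is to reduce from the Vertex Cover (VC) problem, which is classically NP-hard. Given an instance $H=(V,E)$ of VC, I build a 3-\polyopt{} with variables $\{z\}\cup\{x_v:v\in V\}$ and, for each edge $(u,v)\in E$, a single monomial $z\cdot x_u\cdot x_v$. All monomials have degree three, so this is a valid 3-\polyopt{} and the construction is polynomial-time. The claim I aim to establish is that the minimum-size \recmcclin{} of this 3-\polyopt{} equals $|E|+\tau(H)$, where $\tau(H)$ is the vertex-cover number of $H$.

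First, I would use the dominating-set reformulation of \S\ref{sec:dominatingset} to argue that, for any 3-\polyopt{} in which every monomial has degree exactly three, the minimum \recmcclin{} size is $m+|S^\ast|$, where $m$ is the number of monomials and $S^\ast\subseteq\pairs$ is a minimum set of pairs dominating every monomial in $G$. This identity follows from unpacking the definition of a proper triple set: by \ref{rmpDef1}, each cubic monomial $\indexset_i$ requires one ``head triple'' (whose head $\indexset_i$ is monomial-specific, so these triples cannot be shared across monomials), and by \ref{rmpDef2} the 2-element tail chosen by that head triple requires a corresponding ``pair triple'', which is shared by any two monomials that pick the same tail. For the constructed instance, the candidate tails of monomial $\{z,x_u,x_v\}$ are $\{z,x_u\}$, $\{z,x_v\}$, and $\{x_u,x_v\}$. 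The pair $\{x_u,x_v\}$ has degree one in $G$ and by \ref{rule2} may be omitted from some minimum $S^\ast$ without loss; this restricts attention to dominating sets consisting solely of pairs $\{z,x_v\}$.

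The equivalence with VC then proceeds in two directions. From any vertex cover $C\subseteq V$, the set $\{\{z,x_v\}:v\in C\}$ dominates every monomial (since each edge $(u,v)\in E$ is covered in $H$ by $u$ or $v$), giving $|S^\ast|\leq\tau(H)$. Conversely, given any dominating set $S\subseteq\pairs$, define a projection sending $\{z,x_v\}\mapsto v$ and $\{x_u,x_v\}\mapsto u$; the image is a vertex cover of $H$ of cardinality at most $|S|$, yielding $\tau(H)\leq|S^\ast|$. Combining gives minimum-linearization size $=|E|+\tau(H)$, so a polynomial-time algorithm for minimum linearization would solve VC, establishing NP-hardness. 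The main obstacle is justifying the identity min-linearization $=m+|S^\ast|$ rigorously: a priori one might hope to share head triples or exploit more intricate multi-level decompositions, but because the heads of cubic monomials are pairwise distinct and each chosen tail requires its own unique additional triple, the count is tight. Once that identity is in place, the remainder of the argument is bookkeeping.
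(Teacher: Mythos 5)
Your proposal is correct and follows essentially the same route as the paper: a reduction from Vertex Cover using one cubic monomial $z\,x_u\,x_v$ per edge with a shared auxiliary variable, discarding the degree-one pairs $\{x_u,x_v\}$ via the reduction rules, and identifying dominating pairs $\{z,x_v\}$ with cover vertices. The explicit identity (minimum linearization size $=m+\tau(H)$) is just a spelled-out version of the dominating-set correspondence the paper invokes from \S\ref{sec:dominatingset}, so no new ideas are needed beyond what the paper already uses.
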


We conclude by showing that the 3-\polyopt{} is fixed-parameter tractable. Namely, given a fixed value~$k$ and the graph~$G^r = (\pairs^r,\trinomials^r,E)$ associated with a reduced instance~$I$ of the 3-\polyopt{}, one may decide whether~$I$ admits a linearization with at most~$k$ elements in time~$O(k^6 + 3^kk^2)$.
\begin{theorem}\label{thm:fpt}
The 3-\polyopt{} is fixed-parameter tractable.
\end{theorem}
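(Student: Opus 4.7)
The plan is to establish the bound by combining a polynomial kernelization with a bounded search tree. I would proceed in two main phases.

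First, I would refine $G^r$ further to obtain a kernel of size polynomial in $k$. The key observation combines Property 4 with $K_{2,2}$-freeness (Property 3): if an unselected pair $\pair$ has $d$ neighbors in $\trinomials^r$, then each of those neighbors must be dominated by a \emph{distinct} pair in any feasible solution, because two pairs cannot share two common trinomial neighbors without creating a $K_{2,2}$. Consequently, any pair of degree at least $k+1$ must belong to every solution of size at most $k$, so we can safely force such a pair into the solution, remove all trinomials it covers, decrement the budget, and re-apply the reduction rules from Theorem~\ref{thm:reductionrules}. After at most $k$ such forcings, every remaining pair has degree at most $k$ in $\trinomials^r$. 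Since each selected pair covers at most $k$ trinomials and the surviving solution has at most $k$ pairs, $|\trinomials^r| \leq k^2$; combined with Property 2 and the edge-count bound $2|\pairs^r| \leq 3|\trinomials^r|$, we also get $|\pairs^r| = O(k^2)$. A straightforward implementation of these reductions and forcings on the reduced instance runs in $O(k^6)$ time, accounting for the first term.

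Second, I would run a depth-bounded branching procedure on the polynomial kernel. As long as some trinomial $\trinomial$ is uncovered, pick one; by Property 1, $\trinomial$ has at most 3 neighbors in $\pairs^r$, and by the definition of a dominating set at least one of them must be included in the solution. Branch on each of these (at most three) neighbors: tentatively select it, remove all trinomials it covers and the pair itself, and recurse with budget $k-1$. Each path from root to leaf has length at most $k$ and branches at most three ways, so the search tree has at most $3^k$ leaves. Because the kernel has $O(k^2)$ vertices and edges, the bookkeeping at each node of the tree (choosing an uncovered $\trinomial$, updating covered sets, checking budget) can be done in $O(k^2)$ time, which gives the $3^k k^2$ term. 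Correctness of the branching follows from the fact that in every feasible solution, every uncovered trinomial is dominated by one of its at-most-three neighbors, so exhaustive branching over these choices explores a superset of all optimal solutions.

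The main obstacle is making the degree-reduction argument tight enough to yield a polynomial kernel. In particular, one must invoke $K_{2,2}$-freeness (Property 3) to argue that distinct pairs in the solution are required to cover the neighbors of an unselected high-degree pair; otherwise, a single pair could cover many neighbors simultaneously and the $k+1$ degree threshold would not suffice. Once the kernel bound $|\trinomials^r|, |\pairs^r| = O(k^2)$ is secured, combining it with the shallow three-way branching yields the claimed $O(k^6 + 3^k k^2)$ running time and therefore fixed-parameter tractability with parameter $k$.
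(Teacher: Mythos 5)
Your proposal is correct and follows essentially the same route as the paper: a Buss-style high-degree rule (the paper's Rule 6, justified via Property 4, which itself rests on the $K_{2,2}$-freeness of Property 3) yielding an $O(k^2)$-size kernel via Properties 1 and 2, followed by a depth-$k$, three-way bounded search tree, giving the same $O(k^6 + 3^k k^2)$ bound.
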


\section{Best Bound LP Relaxation}\label{sec:BB}

Let~$\bfvhat \in \B^{|\tripleSet|}$ be a binary indicator vector representing a proper triple set~$T$, i.e., $\hat{v}_\triple = 1$ if and only if~$\triple \in T$. For a given~$\bfvhat \in \B^{|\tripleSet|}$, the formulation  presented in~\eqref{polyoptrml} can be rewritten as the following linear program (LP):
\begin{subequations}
\begin{align}
    \min\limits_{\bfy \in [0,1]^{|\graphnodes|}} 
    &\; \sum_{\graphnode \in \graphnodes} \beta_\graphnode y_{\graphnode}
    \label{lp.obj} \\
    \text{s.t.} &\; 
    \underbrace{\begin{pmatrix}
        -1 & 0 & 1 \\
        0 & -1 & 1 \\
        1 & 1 & -1 
    \end{pmatrix}}_{\eqqcolon B} 
    \underbrace{\begin{pmatrix}
        y_\graphnode \\
        y_{\graphnode'} \\
        y_{\graphnode''}
    \end{pmatrix}}_{\eqqcolon \bfy_\triple}
    \leq 
    \underbrace{\begin{pmatrix} 
        1 \\
        1 \\
        2
    \end{pmatrix}}_{\coloneqq \textbf{b}} + 
    \underbrace{\begin{pmatrix}
        -1 \\
        -1 \\
        -1
    \end{pmatrix}}_{\eqqcolon \textbf{c}} \hat{v}_\triple &\,\forall\, \triple \in \tripleSet \label{lp.con1} \\
    & y_{\graphnode}  \leq 1 &\,\forall\, \graphnode \in \graphnodes \label{lp.con2}
\end{align} \label{lp}
\end{subequations}
Coefficient~$\beta_\graphnode = \alpha_i$ if $\indexset = \indexset_i$ for some~$i \in [m]$ 
and $\beta_\graphnode = 0$ otherwise. Lemma~\ref{lemma:LPisBnLD} shows that the optimal solution to~\eqref{lp.con1} is bounded for any choice of $\bfvhat$.


\begin{lemma}\label{lemma:LPisBnLD}
The optimal objective value of~\eqref{lp} lies in the interval $[-\eta,0]$ where $\eta = -\sum\limits_{i \in [\nmonomials]} \min(0,\alpha_i)$. 
\end{lemma}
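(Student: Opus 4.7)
The plan is to prove the two bounds separately by exploiting only the box constraint $\bfy \in [0,1]^{|\graphnodes|}$ and the definition of the objective, without engaging the McCormick inequalities in a delicate way.

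For the upper bound, I would exhibit the feasible point $\bfy = \boldsymbol{0}$ and argue that it attains objective value $0$. The only work is checking feasibility against~\eqref{lp.con1} for every triple $\triple \in \tripleSet$. When $\hat{v}_\triple = 0$, the right-hand side is $\mathbf{b} = (1,1,2)^\top$, which is satisfied by $\bfy = \boldsymbol{0}$ trivially. When $\hat{v}_\triple = 1$, the right-hand side is $\mathbf{b} + \mathbf{c} = (0,0,1)^\top$, so the three rows reduce to $y_{\graphnode''} \leq y_{\graphnode}$, $y_{\graphnode''} \leq y_{\graphnode'}$, and $y_{\graphnode} + y_{\graphnode'} - y_{\graphnode''} \leq 1$, each of which holds at $\bfy = \boldsymbol{0}$. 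Hence the LP is feasible and its optimum is at most $0$.

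For the lower bound, I would bound the objective term-by-term using only that $y_{\indexset_i} \in [0,1]$. For each monomial $i \in [\nmonomials]$, the contribution $\alpha_i y_{\indexset_i}$ satisfies
\begin{equation*}
\alpha_i y_{\indexset_i} \;\geq\; \min(0,\alpha_i),
\end{equation*}
since if $\alpha_i \geq 0$ the left side is nonnegative, while if $\alpha_i < 0$ the minimum over $y_{\indexset_i} \in [0,1]$ is attained at $y_{\indexset_i} = 1$, yielding $\alpha_i$. Because $\beta_\graphnode = 0$ whenever $\graphnode$ is not the index set of an original monomial, summing over $i \in [\nmonomials]$ gives
\begin{equation*}
\sum_{\graphnode \in \graphnodes} \beta_\graphnode y_\graphnode
\;=\; \sum_{i \in [\nmonomials]} \alpha_i y_{\indexset_i}
\;\geq\; \sum_{i \in [\nmonomials]} \min(0,\alpha_i) \;=\; -\eta,
\end{equation*}
which is valid for every feasible $\bfy$ and hence for the optimum.

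There is no real obstacle here: both bounds follow directly from the structure of the objective and the box constraints; the McCormick inequalities are only used to verify feasibility of the point $\bfy = \boldsymbol{0}$, which is immediate because the right-hand side vector $\mathbf{b} + \mathbf{c} \hat{v}_\triple$ has nonnegative entries regardless of $\hat{v}_\triple \in \{0,1\}$. The statement therefore relies on no property of the proper triple set beyond $\bfvhat \in \B^{|\tripleSet|}$, so the bounds are uniform in $\bfvhat$, as claimed.
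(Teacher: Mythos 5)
Your proof is correct and follows essentially the same route as the paper: the upper bound comes from feasibility of $\bfy = \boldsymbol{0}$, and the lower bound from minimizing the objective over the box constraints alone, which yields $\sum_{i \in [\nmonomials]} \min(0,\alpha_i) = -\eta$. Your explicit verification of feasibility of $\bfy = \boldsymbol{0}$ against~\eqref{lp.con1} is slightly more detailed than the paper's, but the argument is the same.
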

\begin{proof}
Observe that the optimal objective value of~\eqref{lp}
must be less than or equal to zero since $\bfy = 0$ is feasible for any choice of~$\bfvhat$. A lower bound of $\sum\limits_{\graphnode \in \graphnodes} \min(0,\beta_\graphnode)$ 
can be attained by setting $y_{\indexset} = 1$ if $\beta_\indexset < 0$ and $y_{\indexset} = 0$ if $\beta_\indexset \geq 0$.  Since $\beta_\indexset \neq 0$ only for $\indexset \in \{\indexset_1,\ldots,\indexset_\nmonomials\}$, the lower bound simplifies to $\sum\limits_{i \in [\nmonomials]} \min(0,\alpha_i)$, so the result follows. 
\end{proof}

Let $\bflambda_\triple \coloneqq (\lambda_{\triple,1},\lambda_{\triple,2},\lambda_{\triple,3})$ denote the vector with 
the dual multipliers for the three inequalities in~\eqref{lp.con1} associated with the triple $\triple$.  Let $\mu_\graphnode$ denote the multiplier for the bound constraint $y_\graphnode \leq 1$ in~\eqref{lp.con2} associated with the index set $\graphnode$. Moreover,  
let~$\bflambda$ be a vector containing~$\bflambda_\triple$ for all    $\triple \in \tripleSet$, 
and $\bfmu$ denote the collection of
multipliers $\mu_\graphnode$ for all $\graphnode \in \graphnodes$. 
The dual of~\eqref{lp} can be written as follows:
\begin{subequations}
\begin{align}
    \max\limits_{(\bflambda,\bfmu) \in \R^{|\tripleSet|} \times \R^{|\graphnodes|}} 
    &\;\; Obj(\bflambda,\bfmu) \coloneqq  -\sum\limits_{\triple \in \tripleSet} (\textbf{b}^T\bflambda_\triple +\textbf{c}^T\bflambda_\triple \vhat_{\triple}) - \sum\limits_{\graphnode \in \graphnodes} \mu_\graphnode &\, \label{dual.obj} \\
    \text{s.t.} 
    &\; \beta_\graphnode + \sum\limits_{\triple : \graphnode = \tailone(\triple)}
    (-\lambda_{\triple,1} + \lambda_{\triple,3}) +
    \sum\limits_{\triple : \graphnode = \tailtwo(\triple)}
    (-\lambda_{\triple,2} + \lambda_{\triple,3}) \nonumber \\
    &\;\; + \sum\limits_{\triple : \graphnode = \head(\triple)} 
    (\lambda_{\triple,1} + \lambda_{\triple,2} - \lambda_{\triple,3})
    + \mu_\graphnode \geq 0 &\,\forall\, \graphnode \in \graphnodes 
    \label{dual.con1} \\
    &\;\; \bflambda,\bfmu \geq 0 &\, \label{dual.con3}
\end{align} \label{dual}
\end{subequations}
 Similarly to the LP~\eqref{lp}, the optimal solution to~\eqref{dual} is bounded given a fixed~$\bfvhat$.
\begin{lemma}\label{lemma:bnLDualobj}
The optimal objective value of~\eqref{dual} lies in the interval $[-\eta,0]$.
\end{lemma}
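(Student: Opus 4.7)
The plan is to invoke LP strong duality. Since the primal LP~\eqref{lp} is a linear program on the polyhedron defined by~\eqref{lp.con1}--\eqref{lp.con2}, which is always feasible (the point $\bfy = \mathbf{0}$ satisfies every McCormick inequality with right-hand side nonnegative), and since Lemma~\ref{lemma:LPisBnLD} already shows that its optimal objective value is finite and lies in $[-\eta,0]$, strong LP duality implies that the dual~\eqref{dual} is also feasible and bounded, and its optimal value equals that of~\eqref{lp}. In particular, the optimal value of~\eqref{dual} lies in $[-\eta,0]$.

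The only genuine verification needed is that~\eqref{dual} is, in fact, the correct Lagrangian dual of~\eqref{lp}. I would quickly check this by writing the primal in standard form $\min \boldsymbol{\beta}^T \bfy$ subject to $A\bfy \leq \textbf{b} + \textbf{c}\vhat_\triple$ (aggregating all the blocks in~\eqref{lp.con1}) and $\bfy \leq \mathbf{1}$ with $\bfy$ free of sign restrictions beyond those bounds (the lower bound $\bfy \geq 0$ is implied by the nonnegativity inequality in the McCormick system). Then the dual associates the nonnegative multipliers $\bflambda_\triple$ with the three McCormick inequalities per triple and $\mu_\graphnode$ with each upper bound, giving the objective $-(\textbf{b}^T\bflambda_\triple + \textbf{c}^T \bflambda_\triple \vhat_\triple) - \mu_\graphnode$ summed appropriately. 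The dual constraint~\eqref{dual.con1} is then obtained by collecting, for each index set $\graphnode$, all contributions from triples in which $\graphnode$ appears as $\tailone$, $\tailtwo$, or $\head$, read off from the matrix $B$ in~\eqref{lp.con1}. This is mechanical and recovers~\eqref{dual} exactly.

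With duality verified, the conclusion is immediate: by strong duality the optimal value of~\eqref{dual} equals the optimal value of~\eqref{lp}, and by Lemma~\ref{lemma:LPisBnLD} the latter lies in $[-\eta,0]$. The main (and essentially only) obstacle is the bookkeeping in the dualization step, which is routine; no new structural insight is required beyond what Lemma~\ref{lemma:LPisBnLD} already provides.
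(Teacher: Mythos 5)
Your proposal is correct and follows essentially the same route as the paper, which also deduces the result directly from Lemma~\ref{lemma:LPisBnLD} together with strong LP duality. The extra verification that~\eqref{dual} is the correct dual of~\eqref{lp} is routine bookkeeping that the paper takes for granted.
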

\begin{proof}
This follows from Lemma~\ref{lemma:LPisBnLD} and strong duality. 
\end{proof}


%
\subsection{Bounds on the dual multipliers}

In this section, we prove that, for any given~$\bfvhat$, $\optbflambda{\bfvhat}$ and~$\optbfmu{\bfvhat}$ have finite bounds that are independent of $\bfvhat$; this result is stated in Proposition~\ref{prop:dual_bounds} and follows from Lemmas~\ref{lemma:complSlackness}, $\ref{lemma:bndlambda3}$, and \ref{lemma:bndlambda12}. For the purposes of this section we assume that~$\bfvhat$ is fixed and omit it from the notation for brevity.


\begin{proposition}\label{prop:dual_bounds}
Let $\optbflambda{\bfvhat},\optbfmu{\bfvhat}$ denote an optimal solution to~\eqref{dual} for a fixed~$\bfvhat$. We have
$\lambda^{\bfvhat}_{\triple,j} \leq M_{\triple,j}$ for all $\triple \in \tripleSet$, $j = 1,2,3$ with $M_{\triple,j} \in [0,\infty)$ and $\mu^{\bfvhat}_\graphnode \leq M_\graphnode$ for all $\graphnode \in \graphnodes$ with $M_\graphnode \in [0,\infty)$.
\end{proposition}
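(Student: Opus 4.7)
The proposition should follow by combining Lemmas \ref{lemma:complSlackness}, \ref{lemma:bndlambda3}, and \ref{lemma:bndlambda12}. As a starting point I would expand the dual objective \eqref{dual.obj} as
\[
Obj(\bflambda,\bfmu) = -\sum_{\triple\in\tripleSet}\bigl[(1-\vhat_\triple)(\lambda_{\triple,1}+\lambda_{\triple,2}) + (2-\vhat_\triple)\lambda_{\triple,3}\bigr] - \sum_{\graphnode\in\graphnodes}\mu_\graphnode.
\]
Since $\bflambda,\bfmu\geq 0$ and $\vhat_\triple\in\{0,1\}$, every summand is nonpositive. Lemma \ref{lemma:bnLDualobj} then bounds $Obj\geq-\eta$ at optimality, so every individual summand has magnitude at most $\eta$. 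This immediately yields $\mu_\graphnode\leq\eta$, $\lambda_{\triple,3}\leq\eta$ for all $\triple$, and $\lambda_{\triple,1},\lambda_{\triple,2}\leq\eta$ for every $\triple$ with $\vhat_\triple=0$. I anticipate that Lemma \ref{lemma:bndlambda3} is essentially this observation isolated for $\lambda_{\triple,3}$.

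The only missing case is $\lambda_{\triple,1}$ and $\lambda_{\triple,2}$ when $\vhat_\triple=1$: their coefficients vanish in the objective, so the objective bound alone gives no information. This is the content of Lemma \ref{lemma:bndlambda12}. I would obtain such a bound by rearranging the dual feasibility constraint \eqref{dual.con1} at $\graphnode=\tailone(\triple)$ (and symmetrically at $\tailtwo(\triple)$), isolating $\lambda_{\triple,1}$ and dropping the nonpositive terms on the right. The resulting upper bound takes the form
\[
\lambda_{\triple,1} \leq |\beta_\graphnode| + \mu_\graphnode + \lambda_{\triple,3} + \sum_{\triple'\neq\triple:\;\graphnode\in\tails(\triple')}\lambda_{\triple',3} + \sum_{\triple'':\;\head(\triple'')=\graphnode}(\lambda_{\triple'',1}+\lambda_{\triple'',2}).
\]
All terms but the last sum are already bounded by the first step, and the last sum ranges over triples whose head is the strictly smaller set $\graphnode$; induction on $|\graphnode|$ grounded at singletons (which cannot be the head of any triple) then produces a finite bound. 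Lemma \ref{lemma:complSlackness} is probably used to streamline this by arguing, via complementary slackness, that an optimal dual may be chosen with $\lambda_{\triple',j}=0$ whenever $\vhat_{\triple'}=0$, restricting the recursive sums to the finitely many active triples.

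Given these three lemmas, the proof of the proposition itself is a short case split: take $M_\graphnode\coloneqq\eta$, set $M_{\triple,j}$ equal to the bound from Lemma \ref{lemma:bndlambda3} when $j=3$ or $\vhat_\triple=0$, and to the bound from Lemma \ref{lemma:bndlambda12} otherwise. These constants are finite and depend only on the instance data $(\alpha_i,\indexset_i)_{i\in[\nmonomials]}$, not on the specific choice of $\bfvhat$. The main obstacle is the inductive core of Lemma \ref{lemma:bndlambda12}: because the recursive upper bound on $\lambda_{\triple,1}$ aggregates multiplier pairs $\lambda_{\triple'',1}+\lambda_{\triple'',2}$ over all triples with head $\graphnode$, one must verify that this aggregation does not blow up uncontrollably as the recursion walks down the hierarchy of index sets. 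Since $M_{\triple,j}$ is permitted to depend on the combinatorial size of $\tripleSet$, the bound can grow with the instance, but checking finiteness at each recursive step is the delicate part.
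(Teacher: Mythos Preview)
Your proposal is correct and follows the same overall architecture as the paper: bound $\lambda_{\triple,3}$ and $\mu_\graphnode$ from the dual objective together with Lemma~\ref{lemma:bnLDualobj}, then bound $\lambda_{\triple,1},\lambda_{\triple,2}$ by rearranging~\eqref{dual.con1} and inducting on $|\graphnode|$. The inductive argument you sketch for Lemma~\ref{lemma:bndlambda12} is essentially the paper's~\eqref{bndlambda12.3}--\eqref{bndlambda12.5}.

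Where you differ is in the handling of Lemma~\ref{lemma:complSlackness}, and your route is in fact cleaner. Your direct expansion of~\eqref{dual.obj} shows every term of $-Obj$ is nonnegative, so $Obj\geq-\eta$ alone forces $\sum_\triple\lambda_{\triple,3}+\sum_\graphnode\mu_\graphnode\leq\eta$ (since $2-\vhat_\triple\geq 1$) for \emph{every} optimal dual. The paper instead proves Lemma~\ref{lemma:complSlackness} first (showing some optimal dual has $\bflambda_\triple=0$ whenever $\vhat_\triple=0$), then simplifies the objective to $-\sum_{\triple:\vhat_\triple=1}\lambda_{\triple,3}-\sum_\graphnode\mu_\graphnode$ to get the same inequality; see~\eqref{simplifydualobj2}. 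So you obtain Lemma~\ref{lemma:bndlambda3} and the key estimate~\eqref{bndlambda12.2} without invoking Lemma~\ref{lemma:complSlackness} at all. Your guess about its role is therefore off: it is not used to make the recursive sums finite ($\tripleSet$ is finite regardless), but rather to collapse the objective in the paper's version of the argument---a step your expansion makes unnecessary. (Lemma~\ref{lemma:complSlackness} is still needed later in Theorem~\ref{lemma:mipbb} to justify the big-$M$ constraints~\eqref{maxmipform.con2}, which require $\bflambda_\triple=0$ when $v_\triple=0$, not merely bounded.)

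One small slip: in your final case split you set $M_{\triple,j}$ depending on whether $\vhat_\triple=0$, which would make $M_{\triple,j}$ a function of $\bfvhat$, contradicting your next sentence. The fix is trivial: for $j=1,2$ always take $M_{\triple,j}$ to be the recursive bound from Lemma~\ref{lemma:bndlambda12}, which is built only from $\eta$, the coefficients $\beta_\graphnode$, and the combinatorics of $\tripleSet$.
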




\begin{lemma}
\label{lemma:complSlackness}
There exists an optimal solution $\optbflambda{\bfvhat},\optbfmu{\bfvhat}$ to~\eqref{dual} with $\optbflambda{\bfvhat}_\triple = 0$ for all $\triple$ such that $\vhat_\triple = 0$.
\end{lemma}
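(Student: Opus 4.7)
The plan is to exploit a redundancy that the assumption $\vhat_\triple = 0$ creates in the primal LP~\eqref{lp}. When $\vhat_\triple = 0$, the three McCormick inequalities in~\eqref{lp.con1} for triple $\triple$ specialize to $-y_{\tailone(\triple)} + y_{\head(\triple)} \leq 1$, $-y_{\tailtwo(\triple)} + y_{\head(\triple)} \leq 1$, and $y_{\tailone(\triple)} + y_{\tailtwo(\triple)} - y_{\head(\triple)} \leq 2$. I would verify directly that each of these is implied by the box constraints $0 \leq y_\graphnode \leq 1$: for instance, the third one is the sum of $(y_{\tailone(\triple)} \leq 1)$, $(y_{\tailtwo(\triple)} \leq 1)$, and $(-y_{\head(\triple)} \leq 0)$, and the first two arise analogously from a single upper-bound constraint and a single sign constraint.

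With redundancy established, the key step is to pass to a reduced LP. Let $P'$ be obtained from~\eqref{lp} by deleting every McCormick block~\eqref{lp.con1} whose triple satisfies $\vhat_\triple = 0$. Since only redundant constraints have been removed, $P'$ has the same feasible set and optimal value as~\eqref{lp}; by Lemma~\ref{lemma:LPisBnLD} this optimum lies in $[-\eta, 0]$ and is in particular finite. Strong LP duality then yields an optimal solution $(\bflambdatilde, \bfmutilde)$ of the dual $D'$ of $P'$, in which $\bflambdatilde$ is indexed only by the retained triples.

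Next I would extend $(\bflambdatilde, \bfmutilde)$ to a candidate solution of~\eqref{dual} by setting $\bar\bflambda_\triple = \bflambdatilde_\triple$ when $\vhat_\triple = 1$, $\bar\bflambda_\triple = 0$ when $\vhat_\triple = 0$, and $\bar\bfmu = \bfmutilde$. Feasibility follows because deleting the primal McCormick block for a triple in the passage from~\eqref{lp} to $P'$ simply deletes the dual variable $\bflambda_\triple$ from~\eqref{dual} without altering any other dual constraint; thus, after zero-padding, the constraints~\eqref{dual.con1} for $(\bar\bflambda, \bar\bfmu)$ reduce pointwise to the corresponding constraints of $D'$, which are satisfied. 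For the objective, each zeroed block contributes $0$ to the sums in~\eqref{dual.obj}, so $Obj(\bar\bflambda,\bar\bfmu)$ equals the optimal value of $D'$, which by strong duality equals the common optimum of $P'$ and~\eqref{lp}, and hence equals the optimum of~\eqref{dual}. Therefore $(\bar\bflambda,\bar\bfmu)$ is the required optimal solution.

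The main obstacle I anticipate is keeping the redundancy accounting precise under the paper's sign conventions — in particular, checking that the three rows of $B$ in~\eqref{lp.con1} with $\vhat_\triple = 0$ on the right-hand side match exactly what one obtains as nonnegative combinations of $y_\graphnode \leq 1$ and $y_\graphnode \geq 0$, with no slack on the right-hand side. This matching is what guarantees that $P$ and $P'$ have identical feasible sets (and hence identical optimal values), rather than merely nested ones. Once the bookkeeping is in place, the remainder is a routine application of LP duality to $P'$.
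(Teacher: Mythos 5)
Your proof is correct, but it takes a genuinely different route from the paper. You argue on the primal side: when $\vhat_\triple = 0$ the three rows of~\eqref{lp.con1} for that triple are nonnegative combinations of the box constraints $0 \leq y_\graphnode \leq 1$ (indeed $-y_{\tailone(\triple)}+y_{\head(\triple)} \leq 1$ and $y_{\tailone(\triple)}+y_{\tailtwo(\triple)}-y_{\head(\triple)} \leq 2$ follow exactly as you state), so deleting those blocks leaves the feasible set and optimal value of~\eqref{lp} unchanged; strong duality on the reduced LP (finite by Lemma~\ref{lemma:LPisBnLD}) plus zero-padding of the deleted multipliers then gives a feasible point of~\eqref{dual} attaining the common optimal value, hence optimal, with the desired zero pattern. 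The paper instead stays entirely on the dual side: starting from any feasible $(\bflambdatilde,\bfmutilde)$ with $\bflambdatilde_{\triple'} \neq 0$ for a deactivated triple, it explicitly shifts the weight of $\bflambdatilde_{\triple'}$ onto the bound multipliers $\mu$ of the three index sets in $\triple'$ (adding $\lambdatilde_{\triple',3}$ to the two tails and $\lambdatilde_{\triple',1}+\lambdatilde_{\triple',2}$ to the head) and verifies by direct computation that~\eqref{dual.con1} still holds and the objective is unchanged. Your argument is shorter and more conceptual, replacing the term-by-term feasibility and objective checks with a redundancy observation and one application of LP duality; the paper's argument is constructive in a slightly stronger sense, since it shows how to transform an arbitrary given dual-feasible (in particular optimal) solution into one with the required support without passing to a different LP. Both establish exactly the statement of the lemma.
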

\begin{proof}
Suppose $(\bflambdatilde,\bfmutilde)$ is  feasible for~\eqref{dual} and $\bflambdatilde_{\triple'} \neq 0$ for some $\triple'$ such that $\vhat_{\triple'} = 0$.  The result follows from the fact that there exists a solution~$(\bflambdahat,\bfmuhat)$ that is feasible for~\eqref{dual}  with $\bflambdahat_{\triple'} = 0$ and $Obj(\bflambdahat,\bfmuhat) = Obj(\bflambdatilde,\bfmutilde)$. Let $\triple' = (\graphnode',\graphnode'',\graphnode''')$.  
Define $\bflambdahat$ and $\bfmuhat$ as follows:
\begin{equation}
\begin{aligned}
    \bflambdahat_{\triple} = \left\{ \begin{aligned}
        \bflambdatilde_{\triple} & \text{ if } \triple \neq \triple' \\
        0 & \text{ if } \triple = \triple'
    \end{aligned} \right. \text{  and  }
    \muhat_\graphnode = \left\{\begin{aligned}
        \mutilde_\graphnode & \text{ if } \graphnode \notin \{\graphnode',\graphnode'',\graphnode'''\} \\
        \mutilde_{\graphnode'} + \lambdatilde_{\triple',3} 
        & \text{ if } \graphnode = \graphnode' \\
        \mutilde_{\graphnode''} + \lambdatilde_{\triple',3} 
        & \text{ if } \graphnode = \graphnode'' \\
        \mutilde_{\graphnode'''} + \lambdatilde_{\triple',1} 
        + \lambdatilde_{\triple',2} & \text{ if } \graphnode = \graphnode'''.
    \end{aligned} \right.
\end{aligned}\label{defmultiplierhat}
\end{equation}
By construction, we have
$\bflambdahat, \bfmuhat \geq 0$. Moreover, $\bflambdahat_{\triple'}$, $\bfmuhat_{\graphnode'}$, $\bfmuhat_{\graphnode''}$, and~$\bfmuhat_{\graphnode'''}$ only figure in the inequalities in~\eqref{dual.con1} for $\graphnode \in \{\graphnode',\graphnode'',\graphnode'''\}$.  Hence, the inequality~\eqref{dual.con1} holds for all $\graphnode \setminus \{\graphnode',\graphnode'',\graphnode'''\}$.  Therefore, we just need to show that~$(\bflambdahat, \bfmuhat)$  satisfy~\eqref{dual.con1} for $\{\graphnode',\graphnode'',\graphnode'''\}$.

First, consider the left hand side of~\eqref{dual.con1} for $\graphnode = \graphnode'$; the analysis for~$\graphnode = \graphnode''$ is identical.  
We have
    \begin{subequations}
    \begin{align}
        &\; \beta_{\graphnode'} + \sum\limits_{\triple : \graphnode' = \tailone(\triple)}
    (-\lambdahat_{\triple,1} + \lambdahat_{\triple,3}) +
    \sum\limits_{\triple : \graphnode' = \tailtwo(\triple)}
    (-\lambdahat_{\triple,2} + \lambdahat_{\triple,3}) 
    + \sum\limits_{\triple : \graphnode' = \head(\triple)} 
    (\lambdahat_{\triple,1} + \lambdahat_{\triple,2} - \lambdahat_{\triple,3})
    + \muhat_{\graphnode'} \label{ineq1.1} \\
    =&\; \beta_{\graphnode'} + (-\lambdahat_{\triple',1} + \lambdahat_{\triple',3}) + \sum\limits_{\triple : \graphnode' = \tailone(\triple) \setminus \{\triple'\}}
    (-\lambdahat_{\triple,1} + \lambdahat_{\triple,3}) +
    \sum\limits_{\triple : \graphnode' = \tailtwo(\triple)}
    (-\lambdahat_{\triple,2} + \lambdahat_{\triple,3}) \nonumber \\
    &\; + \sum\limits_{\triple : \graphnode' = \head(\triple)} 
    (\lambdahat_{\triple,1} + \lambdahat_{\triple,2} - \lambdahat_{\triple,3})
    + \muhat_{\graphnode'} \label{ineq1.2} \\
    =&\; \beta_{\graphnode'} + 0 + \sum\limits_{\triple : \graphnode' = \tailone(\triple) \setminus \{\triple'\}} 
    (-\lambdatilde_{\triple,1} + \lambdatilde_{\triple,3}) +
    \sum\limits_{\triple : \graphnode' = \tailtwo(\triple)}
    (-\lambdatilde_{\triple,2} + \lambdatilde_{\triple,3}) \nonumber \\
    &\; + \sum\limits_{\triple : \graphnode' = \head(\triple)} 
    (\lambdatilde_{\triple,1} + \lambdatilde_{\triple,2} - \lambdatilde_{\triple,3})
    + \mutilde_{\graphnode'} + \lambdatilde_{\triple',3} \label{ineq1.3} \\
    =&\; \beta_{\graphnode'} + \sum\limits_{\triple : \graphnode' = \tailone(\triple) }
    (-\lambdatilde_{\triple,1} + \lambdatilde_{\triple,3}) +
    \sum\limits_{\triple : \graphnode' = \tailtwo(\triple)}
    (-\lambdatilde_{\triple,2} + \lambdatilde_{\triple,3}) \nonumber \\
    &\; + \sum\limits_{\triple : \graphnode' = \head(\triple)} 
    (\lambdatilde_{\triple,1} + \lambdatilde_{\triple,2} - \lambdatilde_{\triple,3})
    + \mutilde_{\graphnode'} + \lambdatilde_{\triple',1} \label{ineq1.4} \\
    \geq &\; \lambdatilde_{\triple',1} \geq 0 \label{ineq1.5}
    \end{align}
    \end{subequations}
     In the first equality, we move the terms associated with~$\triple'$ out of the first summation.  The equality in~\eqref{ineq1.3} is obtained by substituting~\eqref{defmultiplierhat}, and~\eqref{ineq1.4} follows by adding and subtracting the term $\lambdatilde_{\triple',1}$ and collecting the term $(-\lambdatilde_{\triple',1} + \lambdatilde_{\triple',3})$ into the first summation.  
    The first inequality in~\eqref{ineq1.5} is obtained from~\eqref{dual.con1} holding for the variables $(\bflambdatilde,\bfmutilde)$, and the final inequality follows from $\bflambdatilde \geq 0$.  
    
    For $\graphnode = \graphnode'''$, we have
    \begin{subequations}
    \begin{align}
        &\; \beta_{\graphnode'''} + \sum\limits_{\triple : \graphnode''' = \tailone(\triple)}
    (-\lambdahat_{\triple,1} + \lambdahat_{\triple,3}) +
    \sum\limits_{\triple : \graphnode''' = \tailtwo(\triple)}
    (-\lambdahat_{\triple,2} + \lambdahat_{\triple,3}) 
    + \sum\limits_{\triple : \graphnode''' = \head(\triple)} 
    (\lambdahat_{\triple,1} + \lambdahat_{\triple,2} - \lambdahat_{\triple,3})
    + \muhat_{\graphnode'''} \label{ineq3.1} \\
    =&\; \beta_{\graphnode'''} + \sum\limits_{\triple : \graphnode''' = \tailone(\triple)}
    (-\lambdahat_{\triple,1} + \lambdahat_{\triple,3}) +
    \sum\limits_{\triple : \graphnode''' = \tailtwo(\triple)}
    (-\lambdahat_{\triple,2} + \lambdahat_{\triple,3}) \nonumber \\
    &\; +     (\lambdahat_{\triple',1} + \lambdahat_{\triple',2} - \lambdahat_{\triple',3}) + \sum\limits_{\triple : \graphnode''' = \head(\triple) \setminus \{\triple'\}} 
    (\lambdahat_{\triple,1} + \lambdahat_{\triple,2} - \lambdahat_{\triple,3}) + 
    \muhat_{\graphnode'''} \label{ineq3.2} \\
    =&\; \beta_{\graphnode'''} + \sum\limits_{\triple : \graphnode' = \tailone(\triple)}
    (-\lambdatilde_{\triple,1} + \lambdatilde_{\triple,3}) +
    \sum\limits_{\triple : \graphnode''' = \tailtwo(\triple)  \setminus \{\triple'\}}
    (-\lambdatilde_{\triple,2} + \lambdatilde_{\triple,3}) \nonumber \\
    &\; + \sum\limits_{\triple : \graphnode'' = \head(\triple) \setminus \{\triple'\}} 
    (\lambdatilde_{\triple,1} + \lambdatilde_{\triple,2} - \lambdatilde_{\triple,3}) + 0 
    + \mutilde_{\graphnode'''} + \lambdatilde_{\triple',1} + \lambdatilde_{\triple',2} \label{ineq3.3} \\
    =&\; \beta_{\graphnode'''} + \sum\limits_{\triple : \graphnode''' = \tailone(\triple) }
    (-\lambdatilde_{\triple,1} + \lambdatilde_{\triple,3}) +
    \sum\limits_{\triple : \graphnode''' = \tailtwo(\triple)}
    (-\lambdatilde_{\triple,2} + \lambdatilde_{\triple,3}) \nonumber \\
    &\; + \sum\limits_{\triple : \graphnode''' = \head(\triple)} 
    (\lambdatilde_{\triple,1} + \lambdatilde_{\triple,2} - \lambdatilde_{\triple,3})
    + \mutilde_{\graphnode'''} + \lambdatilde_{\triple',3} \label{ineq3.4} \\
    \geq &\; \lambdatilde_{\triple',3} \geq 0 \label{ineq3.5}
    \end{align}
    \end{subequations}
    Equality~\eqref{ineq3.2} follows from splitting the last summation over~$\triple'$. The equality in~\eqref{ineq3.3} is obtained by substituting~\eqref{defmultiplierhat}, and~\eqref{ineq3.4} follows by adding and subtracting the term $\lambdatilde_{\triple',3}$ and collecting the term $(\lambdatilde_{\triple',1} + \lambdatilde_{\triple',2} - \lambdatilde_{\triple',3})$ into the last summation.    The first inequality in~\eqref{ineq3.5} is obtained from~\eqref{dual.con1} holding for the variables $(\bflambdatilde,\bfmutilde)$, and the final inequality follows from $\bflambdatilde \geq 0$. Therefore, $(\bflambdahat,\bfmuhat)$ is  feasible for~\eqref{dual.con1} and $\bflambda_{\triple'} = 0$.  
    
    Finally, we show that $Obj(\bflambdahat,\bfmuhat) = Obj(\bflambdatilde,\bfmutilde)$. This follows from:
\begin{subequations}
\begin{align}
    & Obj(\bflambdahat,\bfmuhat) = -\sum\limits_{\triple \in \tripleSet} (b^T\bflambdahat_\triple +c^T\bflambdahat_\triple \vhat_{\triple}) - \sum\limits_{\graphnode \in \graphnodes} \muhat_\graphnode \label{obj.1} \\
    =&\; -\sum\limits_{\triple \in \tripleSet \setminus \{\triple'\}} (b^T\bflambdahat_\triple +c^T\bflambdahat_\triple \vhat_{\triple}) - 
    (b^T\bflambdahat_{\triple'} +c^T\bflambdahat_{\triple'} \vhat_{\triple'}) 
    - \sum\limits_{\graphnode \in \graphnodes \setminus \{\graphnode',\graphnode'',\graphnode'''\}} \muhat_\graphnode 
    - \muhat_{\graphnode'} - \muhat_{\graphnode''} - \muhat_{\graphnode'''} \label{obj.2} \\
    =&\; -\sum\limits_{\triple \in \tripleSet \setminus \{\triple'\}} (b^T\bflambdatilde_\triple +c^T\bflambdatilde_\triple \vhat_{\triple}) - 0 
    - \sum\limits_{\graphnode \in \graphnodes \setminus \{\graphnode',\graphnode'',\graphnode'''\}} \mutilde_\graphnode 
    - \mutilde_{\graphnode'} - \mutilde_{\graphnode''} - \mutilde_{\graphnode'''} - \lambdatilde_{\triple',1} - \lambdatilde_{\triple',2} - 2\lambdatilde_{\triple',3} \label{obj.3} \\
    =&\; -\sum\limits_{\triple \in \tripleSet \setminus \{\triple'\}} (b^T\bflambdatilde_\triple +c^T\bflambdatilde_\triple \vhat_{\triple}) 
    - \sum\limits_{\graphnode \in \graphnodes} \mutilde_\graphnode -b^T\bflambdatilde_{\triple'} = Obj(\bflambdatilde,\bfmutilde) \label{obj.4} 
\end{align}
\end{subequations}
Equality~\eqref{obj.2} follows by splitting the first sum in~$\triple'$ and the second sum in 
$\{\graphnode',\graphnode'',\graphnode'''\}$.
The equality in~\eqref{obj.3} follows by substituting~\eqref{defmultiplierhat}.  The equality in~\eqref{obj.4} follows from the definition of $b$ in~\eqref{lp.con1}. The final equality follows by noting that $b^T\bflambdatilde_{\triple'} = b^T\bflambdatilde_{\triple'} + c^T\bflambdatilde_{\triple'}v_{\triple'}$ since $v_{\triple'} = 0$ and collecting the terms into the summation over $\triple$ in $\tripleSet \setminus \{\triple'\}$.  
\end{proof}

\begin{lemma}\label{lemma:bndlambda3}
Let $(\optbflambda{\bfvhat},\optbfmu{\bfvhat})$ be an optimal solution to~\eqref{dual} 
as stated in Lemma~\ref{lemma:complSlackness}. 
Then 
$\optlambda{\bfvhat}_{\triple,3}, \optmu{\bfvhat}_\graphnode \leq \eta$.
\end{lemma}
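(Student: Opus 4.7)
The plan is to bound each $\lambda^{\bfvhat}_{\triple,3}$ and $\mu^{\bfvhat}_\graphnode$ individually by expressing them as nonnegative summands in the dual objective $Obj(\bflambda,\bfmu)$, which is itself bounded below by $-\eta$. Because Lemma~\ref{lemma:complSlackness} lets us assume $\optbflambda{\bfvhat}_\triple = 0$ whenever $\vhat_\triple = 0$, only the triples with $\vhat_\triple = 1$ contribute, and at those triples the coefficient vector $\textbf{b} + \textbf{c}\vhat_\triple$ collapses nicely.

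First, I would substitute the given values $\textbf{b} = (1,1,2)^T$ and $\textbf{c} = (-1,-1,-1)^T$ to observe that $\textbf{b} + \textbf{c}\vhat_\triple = (1-\vhat_\triple,\,1-\vhat_\triple,\,2-\vhat_\triple)^T$. When $\vhat_\triple = 0$ the complementary-slackness choice of Lemma~\ref{lemma:complSlackness} makes the whole triple term vanish; when $\vhat_\triple = 1$ the vector reduces to $(0,0,1)^T$. Hence the dual objective simplifies to
\begin{equation*}
Obj(\optbflambda{\bfvhat},\optbfmu{\bfvhat}) \;=\; -\sum_{\triple : \vhat_\triple = 1} \optlambda{\bfvhat}_{\triple,3} \;-\; \sum_{\graphnode \in \graphnodes} \optmu{\bfvhat}_\graphnode.
\end{equation*}

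Second, I invoke Lemma~\ref{lemma:bnLDualobj}, which gives $Obj(\optbflambda{\bfvhat},\optbfmu{\bfvhat}) \geq -\eta$. Rearranging yields
\begin{equation*}
\sum_{\triple : \vhat_\triple = 1} \optlambda{\bfvhat}_{\triple,3} \;+\; \sum_{\graphnode \in \graphnodes} \optmu{\bfvhat}_\graphnode \;\leq\; \eta.
\end{equation*}
Since all summands are nonnegative by the dual feasibility constraint~\eqref{dual.con3}, each individual term is at most $\eta$. This gives $\optlambda{\bfvhat}_{\triple,3} \leq \eta$ for every $\triple$ with $\vhat_\triple = 1$ and $\optmu{\bfvhat}_\graphnode \leq \eta$ for every $\graphnode \in \graphnodes$. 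For the remaining triples with $\vhat_\triple = 0$ the bound is trivial, since by Lemma~\ref{lemma:complSlackness} we have $\optlambda{\bfvhat}_{\triple,3} = 0 \leq \eta$ (noting that $\eta \geq 0$ by its definition as $-\sum_i \min(0,\alpha_i)$).

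There is no serious obstacle here; the entire argument is a one-line manipulation once the two prior lemmas are in place. The only subtle point worth highlighting explicitly in the write-up is that we are allowed to pick the particular optimal dual solution singled out by Lemma~\ref{lemma:complSlackness}, so the restriction to $\triple$ with $\vhat_\triple = 1$ in the simplified objective is legitimate and does not lose generality for the purpose of establishing the existence of an optimal solution respecting the claimed bounds.
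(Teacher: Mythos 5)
Your argument is correct and matches the paper's proof essentially verbatim: simplify the objective using $\textbf{b}+\textbf{c}=(0,0,1)^T$ together with Lemma~\ref{lemma:complSlackness}, invoke Lemma~\ref{lemma:bnLDualobj} to bound the resulting nonnegative sum by $\eta$, and conclude termwise, handling $\vhat_\triple=0$ via $\optlambda{\bfvhat}_{\triple,3}=0\leq\eta$. No differences worth noting.
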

\begin{proof}
Consider the term involving $\optbflambda{\bfvhat}_{\triple}$ 
in~\eqref{dual.obj} for some triple~$\triple$. This can be simplified as
\begin{equation}
\begin{aligned}
    \textbf{b}^T\optbflambda{\bfvhat}_{\triple} + \textbf{c}^T \optbflambda{\bfvhat}_{\triple} \vhat_\triple
    = \left\{ \begin{aligned}
        \optlambda{\bfvhat}_{\triple,3} & \text{ if } \vhat_\triple = 1 \\
        \textbf{b}^T\optbflambda{\bfvhat}_\triple = 0 & \text{ if } \vhat_\triple = 0    
    \end{aligned} \right.
\end{aligned}\label{simplifydualobj}
\end{equation}
which follows by substituting for $\textbf{b},\textbf{c}$ from~\eqref{lp.con1} and  from Lemma~\ref{lemma:complSlackness}.
Thus, the optimal value of the objective in~\eqref{dual.obj} can be reduced to 
\begin{equation}
Obj(\bflambdahat,\bfmuhat) =     -\sum_{\triple \in \tripleSet} (\textbf{b}^T\optbflambda{\bfvhat}_\triple +\textbf{c}^T\optbflambda{\bfvhat}_\triple \vhat_{\triple}) - \sum\limits_{\graphnode \in \graphnodes} \optmu{\bfvhat}_\graphnode 
    = - \sum_{\triple \in \tripleSet : \vhat_\triple = 1} \optlambda{\bfvhat}_{\triple,3} - \sum_{\graphnode \in \graphnodes} \optmu{\bfvhat}_\graphnode 
    \geq -\eta,
    \label{simplifydualobj2}
\end{equation}
where the first equality follows from~\eqref{simplifydualobj} and the inequality from Lemma~\ref{lemma:bnLDualobj}. Combining $\optbflambda{\bfvhat}, \optbfmu{\bfvhat} \geq 0$ with~\eqref{simplifydualobj2} yields that $\optlambda{\bfvhat}_{\triple,3} \leq \eta$ for all $\triple$ in $\tripleSet$ such that $\vhat_\triple = 1$ and $\optmu{\bfvhat}_\graphnode \leq \eta$ for all $\graphnode$ in $\graphnodes$. To complete the proof it suffices to recall that, by Lemma~\ref{lemma:complSlackness},  $\optlambda{\bfvhat}_{\triple,3} = 0 \leq \eta$ for all $\triple$ in $\tripleSet$ such that $\vhat_\triple = 0$.
\end{proof}

Lemma~\ref{lemma:bndlambda3} yields that $M_{\triple,3} = \eta$ for all $\triple$ in $\tripleSet$ and $M_{\graphnode} = \eta$ for all $\graphnode$ in $\graphnodes$. Next, we show that 
the bounds for~$\lambda^{\bfvhat}_{\triple,1}$ and~$\lambda^{\bfvhat}_{\triple,2}$ 
are also finite for all $\triple$ in $\tripleSet$.

\begin{lemma}\label{lemma:bndlambda12}
Let $(\optbflambda{\bfvhat},\optbfmu{\bfvhat})$ be an optimal solution to~\eqref{dual} as stated in Lemma~\ref{lemma:complSlackness}. 
There exists a finite $M_{\triple,j}$ for each $\triple \in \tripleSet$ and  $j = 1,2$ such that $\optlambda{\bfvhat}_{\triple,j} \leq M_{\triple,j}$.
\end{lemma}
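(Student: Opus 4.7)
The plan is to upper-bound $\optlambda{\bfvhat}_{\triple,1}$ (the case $j=2$ is symmetric) by isolating it from the dual feasibility constraint~\eqref{dual.con1} at $\graphnode_1 \coloneqq \tailone(\triple)$ and inducting on $|\graphnode_1|$. The variable $\optlambda{\bfvhat}_{\triple,1}$ appears in that constraint with coefficient $-1$; rearranging and discarding the non-positive terms $-\optlambda{\bfvhat}_{\triple',1}$, $-\optlambda{\bfvhat}_{\triple',2}$, and $-\optlambda{\bfvhat}_{\triple',3}$ (which are $\leq 0$ because $\bflambda \geq 0$) yields
\begin{equation*}
\optlambda{\bfvhat}_{\triple,1} \leq \beta_{\graphnode_1} + \optmu{\bfvhat}_{\graphnode_1} + \sum_{\triple' \in \tripleSet : \graphnode_1 \in \tails(\triple')} \optlambda{\bfvhat}_{\triple',3} + \sum_{\triple' \in \tripleSet : \head(\triple') = \graphnode_1} \left(\optlambda{\bfvhat}_{\triple',1} + \optlambda{\bfvhat}_{\triple',2}\right).
\end{equation*}
By Lemma~\ref{lemma:bndlambda3}, every $\optlambda{\bfvhat}_{\triple',3}$ and $\optmu{\bfvhat}_{\graphnode_1}$ appearing above is bounded uniformly by $\eta$, so only the final summation requires inductive control.

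For the base case $|\graphnode_1| = 1$, the set $\{\triple' \in \tripleSet : \head(\triple') = \graphnode_1\}$ is empty because every head has cardinality at least two, so the bound reduces to a finite quantity depending only on the problem data. For the inductive step with $|\graphnode_1| = s \geq 2$, assume finite bounds $M_{\triple',j}$ for $j = 1, 2$ have been established whenever $\max(|\tailone(\triple')|, |\tailtwo(\triple')|) < s$. Any $\triple'$ with $\head(\triple') = \graphnode_1$ has tails $\graphnode'_1, \graphnode'_2$ satisfying $\graphnode'_1 \cup \graphnode'_2 = \graphnode_1$ and $\graphnode'_1 \cap \graphnode'_2 = \emptyset$, hence $|\graphnode'_1|, |\graphnode'_2| < s$, so the inductive hypothesis bounds each $\optlambda{\bfvhat}_{\triple',1}$ and $\optlambda{\bfvhat}_{\triple',2}$ in the remaining summation. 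The symmetric argument, applied at $\tailtwo(\triple)$, yields $M_{\triple,2}$.

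The main subtlety is to make the inductive bound manifestly independent of $\bfvhat$. The raw dual constraint mixes multipliers of all three types with both signs, so one must first exploit non-negativity to isolate the sign-definite contributions and then appeal to Lemma~\ref{lemma:bndlambda3} to substitute the $\bfvhat$-independent bound $\eta$. Since $|\tailone(\triple)|, |\tailtwo(\triple)| < |\head(\triple)| \leq n$, the induction terminates in at most $n - 1$ steps, producing a finite $M_{\triple,j}$ for every $\triple \in \tripleSet$ and $j \in \{1,2\}$.
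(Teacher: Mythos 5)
Your proof is correct and takes essentially the same route as the paper's: both rearrange the dual feasibility constraint~\eqref{dual.con1} at the tail index set, bound the $\lambda_{\triple,3}$ and $\mu$ contributions via Lemma~\ref{lemma:bndlambda3} (the paper uses the aggregated bound~\eqref{simplifydualobj2} for a single $+\eta$, you use the individual $\eta$ bounds, which only changes the constant), and then induct on the cardinality of the tail set, with base case $|\graphnode|=1$ where no triple can have that set as its head. The only cosmetic difference is that you isolate a single multiplier whereas the paper bounds the whole nonnegative left-hand-side sum, which is equivalent for establishing finiteness of $M_{\triple,j}$ independent of $\bfvhat$.
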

\begin{proof}
Consider the inequality in~\eqref{dual.con1} for $\graphnode \in \graphnodes$. This can be rewritten for $(\optbflambda{\bfvhat},\optbfmu{\bfvhat})$ as
\begin{equation}
    \sum\limits_{\triple : \graphnode = \tailone(\triple)}
    \optlambda{\bfvhat}_{\triple,1} + \sum\limits_{\triple : \graphnode = \tailtwo(\triple)} \optlambda{\bfvhat}_{\triple,2} \leq \beta_\graphnode + 
    \sum\limits_{\triple : \graphnode = \tailone(\triple)}
    \optlambda{\bfvhat}_{\triple,3} +
    \sum\limits_{\triple : \graphnode = \tailtwo(\triple)}
    \optlambda{\bfvhat}_{\triple,3} + \sum\limits_{\triple : \graphnode = \head(\triple)} 
    (\optlambda{\bfvhat}_{\triple,1} + \optlambda{\bfvhat}_{\triple,2} - \optlambda{\bfvhat}_{\triple,3})
    + \optmu{\bfvhat}_\graphnode 
    \label{bndlambda12.1}
\end{equation}
From~\eqref{simplifydualobj2} 
we have that $\sum\limits_{\triple \in \tripleSet} \optlambda{\bfvhat}_{\triple,3} + \sum\limits_{\graphnode \in \graphnodes} \optmu{\bfvhat}_\graphnode \leq \eta$. Then we can upper bound the terms involving $\optlambda{\bfvhat}_{\triple,3}$ and $\optmu{\bfvhat}_\graphnode$ on the right hand side of~\eqref{bndlambda12.1} as
\begin{equation}
    \sum\limits_{\triple : \graphnode = \tailone(\triple)}
    \optlambda{\bfvhat}_{\triple,3} +
    \sum\limits_{\triple : \graphnode = \tailtwo(\triple)}
    \optlambda{\bfvhat}_{\triple,3} + \optmu{\bfvhat}_\graphnode     
    \leq 
    \sum\limits_{\triple : \tripleSet}
    \optlambda{\bfvhat}_{\triple,3} + \sum_{\graphnode \in \graphnodes} \optmu{\bfvhat}_\graphnode     
    \leq \eta 
    \label{bndlambda12.2}
\end{equation}
where the first inequality follows by noting that either $\graphnode = \tailone(\triple)$ or $\graphnode = \tailtwo(\triple)$ but not both, and from the non-negativity of multipliers. The second inequality follows from~\eqref{simplifydualobj2} and Lemma~\ref{lemma:complSlackness}.  Thus, the inequality~\eqref{bndlambda12.1} can be simplified to 
\begin{equation}
    \sum\limits_{\triple : \graphnode = \tailone(\triple)}
    \optlambda{\bfvhat}_{\triple,1} + \sum\limits_{\triple : \graphnode = \tailtwo(\triple)} \optlambda{\bfvhat}_{\triple,2} \leq 
    \beta_\graphnode + 
    \eta + \sum\limits_{\triple : \graphnode = \head(\triple)} 
    (\optlambda{\bfvhat}_{\triple,1} + \optlambda{\bfvhat}_{\triple,2} - \optlambda{\bfvhat}_{\triple,3}) \leq \beta_\graphnode + 
    \eta + \sum\limits_{\triple : \graphnode = \head(\triple)} 
    (\optlambda{\bfvhat}_{\triple,1} + \optlambda{\bfvhat}_{\triple,2})    
    \label{bndlambda12.3}
\end{equation}
where the first inequality follows from~\eqref{bndlambda12.2} and the second inequality from the nonnegativity of $\optlambda{\bfvhat}_{\triple,3}$. Observe that the right hand side of~\eqref{bndlambda12.3} involves the multipliers $\optlambda{\bfvhat}_{\triple,1}$ and $\optlambda{\bfvhat}_{\triple,2}$ for all $\triple$ such that $\graphnode = \head(\triple)$ i.e., the triples~$\triple$ for which $\graphnode$ is the head. If an upper bound is available for such multipliers then we can use~\eqref{bndlambda12.3} to derive an upper bound on the arcs in which~$\graphnode$ is a tail.  

We 
show by induction that~$M_{\triple,1}$ and~$M_{\triple,2}$ are finite; the argument delivers an iterative procedure to construct these bounds. First, consider $\mathcal{J}_1 \coloneqq \{ \graphnode \in \graphnodes \,|\, |\graphnode| = 1\}$.  We have $\{\triple \in \tripleSet \,|\, \graphnode = \head(\triple)\} = \emptyset$ for each~$\graphnode$ in~$\mathcal{J}_1$, i.e., $\graphnode$ cannot be the head of any triple. Therefore, the inequality~\eqref{bndlambda12.3} for~$\indexset \in \mathcal{J}_1$
becomes 
    \begin{equation}
        \sum\limits_{\triple : \graphnode = \tailone(\triple)}
        \optlambda{\bfvhat}_{\triple,1} + \sum\limits_{\triple : \graphnode = \tailtwo(\triple)} \optlambda{\bfvhat}_{\triple,2} \leq \beta_\graphnode + \eta.
        \label{bndlambda12.4}
    \end{equation}
    Therefore, $M_{\triple,1} \leq \beta_\graphnode + \eta$ and
    $M_{\triple,2} \leq \beta_\graphnode + \eta$ for each $\triple$ in $\tripleSet$ such that $\tailone(\triple) \in \mathcal{J}_1$ or $\tailtwo(\triple) \in \mathcal{J}_1$, respectively.  Next, we consider $\mathcal{J}_2 \coloneqq \{ \graphnode \in \graphnodes \,|\, |\graphnode| = 2\}$. For each~$J$ in $\mathcal{J}_2$, any~$\triple$ in $\{\triple \in \tripleSet \,|\, \graphnode = \head(\triple)\}$ is such that 
    $|\tailone(\triple)| = 1$ and $|\tailtwo(\triple)| = 1$. Therefore, upper bounds $M_{\triple,1}$ and $M_{\triple,2}$ have been identified for~$\optlambda{\bfvhat}_{\triple,1}$ and~$\optlambda{\bfvhat}_{\triple,2}$, respectively, in the first iteration. Hence~\eqref{bndlambda12.3} can be written  as 
    \begin{equation}
        \sum\limits_{\triple : \graphnode = \tailone(\triple)}
        \optlambda{\bfvhat}_{\triple,1} + \sum\limits_{\triple : \graphnode = \tailtwo(\triple)} \optlambda{\bfvhat}_{\triple,2} \leq 
        \beta_\graphnode + \eta + \sum\limits_{\triple : \graphnode = \head(\triple)} 
        (M_{\triple,1} + M_{\triple,2}).   
        \label{bndlambda12.5}
    \end{equation}
    Thus $M_{\triple,1}$ for $\tailone(\triple) \in \mathcal{J}_2$ and $M_{\triple,2}$ for $\tailtwo(\triple) \in \mathcal{J}_2$ can be obtained from the right hand side of~\eqref{bndlambda12.5}.  We can repeat the above for $\mathcal{J}_k \coloneqq \{ \graphnode \in \graphnodes \,|\, |\graphnode| = k\}$, $3 \leq k \leq \nvariables$, by considering sets of increasing cardinality to determine all the bounds $M_{\triple,j}$ for $j = 1,2$.  
\end{proof}

\subsection{Best Bound MIP}

Let~$\bfsetV$ denote the set of vectors~$\bfv$ in $\B^{|\tripleSet|}$ composing a feasible solution to~\eqref{minlin}, and let
$\bfsetV_k \coloneqq \{ \bfv  \,|\, \bfv \in \bfsetV, \|\bfv\|_1 \leq k\}$, i.e., the elements of~$\bfsetV_K$ represent the proper triple sets containing at most~$k$ elements. 
We consider the following bilevel formulation to identify an element of~$\bfsetV_k$
that yields the best LP relaxation bound.
\begin{subequations}
\begin{align}
    \max\limits_{\bfv \in \bfsetV_k} \min\limits_{\bfy \in [0,1]^{|\graphnodes|}} 
    &\; \sum\limits_{i = 1}^{\nmonomials} \alpha_i y_{\indexset_i} &\, \label{maxminform.obj} \\
    \text{s.t.} &\; B\bfy_\triple \leq \textbf{b} + \textbf{c} v_\triple 
    &\,\forall\, \triple = (\graphnode,\graphnode',\graphnode'') \in \tripleSet \label{maxminform.con1}
\end{align}\label{maxminform}
\end{subequations}
Note that $\bfy$ and $\bfv$ variables are defined over~$\tripleSet$ and~$\graphnodes$, 
respectively, as in~\eqref{lp}. Further, we use~$\bfy_\triple$ to denote the collection of variables $(y_{\graphnode},y_{\graphnode'},y_{\graphnode''})$, where~$\triple = (\indexset,\indexset',\indexset'')$. We use strong duality to cast 
\eqref{maxminform} as a single-level maximization MIP.

\begin{theorem}\label{lemma:mipbb}
The max-min problem in~\eqref{maxminform} is equivalent to the following MIP:
%
\begin{eqnarray}
    \max\limits_{\bfv \in \bfsetV_K, \bflambda \in \R^{|\tripleSet|}, \bfmu \in \R^{|\graphnodes|}} 
    &\; -\sum\limits_{\triple \in \tripleSet} \lambda_{\triple,3} - \sum\limits_{\graphnode \in \graphnodes} \mu_\graphnode &\, \label{maxmipform.obj} \\
    \text{s.t.} 
    &\; \eqref{dual.con1}-\eqref{dual.con3} \label{maxmipform.con1} \\
    &\; \lambda_{\triple,j} \leq M_{\triple,j} v_\triple &\, \triple \in \tripleSet, j = 1,2,3. \label{maxmipform.con2} 
\end{eqnarray}\label{maxmipform}
\end{theorem}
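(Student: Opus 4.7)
The plan is to prove the equivalence through standard LP duality applied to the inner minimization, followed by a linearization of the resulting bilinear objective using the big-M constants furnished by Proposition~\ref{prop:dual_bounds}. For any fixed $\bfv \in \bfsetV_k$, the inner problem in \eqref{maxminform} is precisely the LP \eqref{lp} with $\bfvhat = \bfv$, whose feasible region is nonempty ($\bfy = 0$ works) and whose optimum lies in $[-\eta,0]$ by Lemma~\ref{lemma:LPisBnLD}. Strong duality therefore applies, and the inner $\min$ can be replaced with the $\max$ over $(\bflambda,\bfmu)$ of the dual objective \eqref{dual.obj} subject to \eqref{dual.con1}--\eqref{dual.con3}. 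This converts \eqref{maxminform} into a single-level $\max$ over $(\bfv,\bflambda,\bfmu)$, at the cost of introducing the bilinear terms $\textbf{c}^T\bflambda_\triple v_\triple = -(\lambda_{\triple,1}+\lambda_{\triple,2}+\lambda_{\triple,3})v_\triple$ in the objective.

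The next step is to remove this bilinearity by appending the big-M constraints \eqref{maxmipform.con2}. I would argue that these constraints do not remove any optimal solution. Given any $\bfv \in \bfsetV_k$, Lemma~\ref{lemma:complSlackness} furnishes an optimal dual $(\optbflambda{\bfv},\optbfmu{\bfv})$ with $\optbflambda{\bfv}_\triple = 0$ whenever $v_\triple = 0$, so \eqref{maxmipform.con2} is automatically satisfied in that case. When $v_\triple = 1$, Proposition~\ref{prop:dual_bounds} (built from Lemmas~\ref{lemma:bndlambda3} and~\ref{lemma:bndlambda12}) guarantees $\optlambda{\bfv}_{\triple,j} \leq M_{\triple,j} = M_{\triple,j} v_\triple$. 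Hence the MIP's feasible region at each $\bfv$ still contains a dual optimum, so restricting to \eqref{maxmipform.con2} preserves the optimal value of the max-max reformulation.

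Next, I would show that under \eqref{maxmipform.con2} the original dual objective collapses to the linear expression in~\eqref{maxmipform.obj}. Rewriting
\begin{equation*}
-\sum_{\triple \in \tripleSet}\bigl(\textbf{b}^T\bflambda_\triple + \textbf{c}^T\bflambda_\triple v_\triple\bigr) - \sum_{\graphnode\in\graphnodes}\mu_\graphnode = -\sum_{\triple \in \tripleSet}\bigl[(1-v_\triple)(\lambda_{\triple,1}+\lambda_{\triple,2}) + (2-v_\triple)\lambda_{\triple,3}\bigr] - \sum_{\graphnode\in\graphnodes}\mu_\graphnode,
\end{equation*}
one sees that whenever $v_\triple = 0$ the constraints $\lambda_{\triple,j} \leq M_{\triple,j} v_\triple = 0$ zero out every $\lambda_{\triple,j}$, so the $\triple$-contribution vanishes; whenever $v_\triple = 1$ the coefficients on $\lambda_{\triple,1},\lambda_{\triple,2}$ are zero and the remaining term is $-\lambda_{\triple,3}$. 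Both cases produce the same summand $-\lambda_{\triple,3}$, which is exactly the simplification already recorded in~\eqref{simplifydualobj}. Aggregating yields the objective \eqref{maxmipform.obj}.

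Combining the two directions gives the equivalence: for each $\bfv \in \bfsetV_k$, the MIP value at $\bfv$ equals the inner-dual optimum (by the no-cut argument above plus the objective identity), and strong duality identifies that with the inner-primal optimum in \eqref{maxminform}; conversely, any MIP feasible $(\bfv,\bflambda,\bfmu)$ has $(\bflambda,\bfmu)$ dual-feasible at $\bfv$ so weak duality bounds its objective by the inner-primal value. Maximizing over $\bfv \in \bfsetV_k$ on both sides completes the proof. The main delicate point, which is entirely discharged by the earlier lemmas, is ensuring the big-M values $M_{\triple,j}$ used in \eqref{maxmipform.con2} are simultaneously \emph{finite}, \emph{independent of $\bfv$}, and \emph{large enough} to preserve at least one optimal dual solution; everything else is routine LP-duality bookkeeping.
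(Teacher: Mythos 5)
Your proposal is correct and follows essentially the same route as the paper's proof: dualize the inner LP via strong duality (justified by Lemma~\ref{lemma:LPisBnLD}), collapse to a single-level maximization, use Lemma~\ref{lemma:complSlackness} together with the bounds of Lemmas~\ref{lemma:bndlambda3} and~\ref{lemma:bndlambda12} to validate the big-M constraints~\eqref{maxmipform.con2}, and simplify the objective exactly as in~\eqref{simplifydualobj}. Your explicit weak-duality argument for the converse direction is a useful touch that the paper leaves implicit, but it does not change the substance of the argument.
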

\begin{proof} First, we show 
that~\eqref{maxminform} can be cast as the following single-level maximization problem:
\begin{subequations}
\begin{align}
    \max\limits_{\bfv \in \bfsetV_k, \bflambda \in \R^{|\tripleSet|}, \bfmu \in \R^{|\graphnodes|}} 
    &\; -\sum\limits_{\triple \in \tripleSet} (\textbf{b}^T\bflambda_\triple +\textbf{c}^T\bflambda_\triple v_{\triple}) - \sum\limits_{\graphnode \in \graphnodes} \mu_\graphnode &\, \label{maxform.obj} \\
    \text{s.t.} 
    &\; \eqref{dual.con1} - \eqref{dual.con3} \label{maxform.con1}
\end{align}\label{maxform}
\end{subequations}
From Lemma~\ref{lemma:LPisBnLD} we have that the inner minimization problem in~\eqref{maxminform}, given by~\eqref{lp},  attains a finite optimal value for any $\bfv$. By strong duality of LP, the optimal objective value of the inner minimization problem is equal to the optimal objective value of the dual~\eqref{dual}. 
Substituting~\eqref{lp} by~\eqref{dual}
and noting that $\max\limits_{\bfv \in \bfsetV_K}$ and $\max\limits_{\bflambda,\bfmu}$ can be combined into a single level proves the claim.
%
%

Formulation~\eqref{maxform} has linear constraints but a bilinear objective, since~$v_\triple$ multiplies~$\textbf{c}^T\lambda_\triple$. 
By Lemma~\ref{lemma:complSlackness}, the optimal solution to~\eqref{dual} satisfies $\optbflambda{\bfv}_\triple = 0$ for each~$\triple$ such that $v_\triple = 0$. Lemmas~\ref{lemma:bndlambda3} and~\ref{lemma:bndlambda12} provide upper bounds on the optimal multipliers~$(\optbflambda{\bfvhat},\optbfmu{\bfvhat})$. Hence, the constraints~\eqref{maxmipform.con2} are valid. Finally, the simplification of the objective function follows from~\eqref{simplifydualobj2}, so the result follows. 
\end{proof}

\section{Computational results}
\label{sec:computational_results}

In this section, we present the results of a numerical study conducted to demonstrate the performance of the algorithms introduced in this paper. The baseline algorithm is~\texttt{Seq}, the sequential approach adopted by state-of-the-art global optimization solvers, in which the variables in each term are arbitrarily ordered and products of variables are linearized sequentially (throughout all the monomials in which they occur). We evaluate the performance of three algorithms to solve the~\polyopt{}:  \texttt{MinLin}, the MIP formulation presented in~\S\ref{sec:exact}; \texttt{Greedy}, the sub-optimal algorithm for the minimum size~\recmcclin{} presented in~\S\ref{sec:heuristic}; and \texttt{BB}, the best bound MIP of~\S\ref{sec:BB}. In some experiments, we use \texttt{Full} to refer to the linearization containing all the triples.

We solve each instance of our data set by identifying a proper triple set~$T$ first and then use~$T$ to solve the optimization problem. The time limit allotted to all these operations is 10 minutes. We set a time limit of 30 seconds for each execution of~\texttt{MinLin} and~\texttt{BB}; the runtimes of~\texttt{Seq} and~\texttt{Greedy} are negligible. The linearization of~\texttt{Greedy} is given as warm start to~\texttt{MinLin}, and the best linearization~$T$ identified by~\texttt{MinLin} is used as warm start for~\texttt{BB}. Additionally, the size of~$T$ is given as cardinality constraint for~\texttt{BB} ($K = |T|$). 

We implement and run our experiments using  Python 3.9 on a 4.20 GHz Intel(R) Core(TM) i7-7700K processor with a single thread and 32GB of RAM. We use \textbf{Gurobi} 9.1 (\cite{gurobi}) to solve all the optimization problems. For the second step (solution of the original problem based on the linearization identified in the first step), we deactivate the generation of cuts by setting the parameter \texttt{Cuts} to zero.

\subsection{Instances}
\label{sec:test_set}

We use four families of instances in our experiments. The first benchmark contains instances of multilinear optimization problems introduced by~\cite{dpks:20} (see also~\cite{bao2015global});  The second is a traditional benchmark data set used in computer vision~(\cite{crh:17}). Finally, the \texttt{autocorr} instances were extracted from {POLIP}, a library of polynomially constrained mixed-integer programming, (\url{http://polip.zib.de}). 


\paragraph{Multilinear optimization problems:} The first data set consists of 330 unconstrained multilinear problems, which is divided into two categories: \texttt{mult3} and~\texttt{mult4}. For each combination of~$n \in \{20,25,30,35,40\}$ and $m \in \{50,60,\ldots,150\}$, we generate 3 instances in which all monomials are of degree~$3$ (for \texttt{mult3}) and 3 instances with monomials of degree~$4$ (for \texttt{mult4}). The variables in each monomial are chosen independently and uniformly at random (and without replacement). The coefficients of each monomial are integer values chosen uniformly from the interval~$[-100,100]$.



\paragraph{Vision Instances}

The~\texttt{vision} instances model an image restoration problem, which has been widely studied in computer vision (see e.g., \cite{crh:17}).  
The problem can be modeled as a \polyopt{} $f(\bfx) = L(\bfx) + H(\bfx)$, with $L(\bfx)$ being an affine function and $H(\bfx)$ a multilinear function of degree four. In our experiments, we use the 45 instances generated in~\cite{crh:17}, for which $n \in \{100, 150, 225\}$. The instances of a given size share the same multilinear function $H(\bfx)$, i.e., they only differ  in the coefficients of~$L(\bfx)$.

\paragraph{Auto-correlation Instances}

The \texttt{autocorr} instances are from POLIP (\url{http://polip.zib.de}). We consider instances with~$n \in \{20,25,30,35,40,45\}$ in our experiments.

\subsection{Linearization Size}
\label{sec:experiments_linearization_size}

Figure~\ref{fig:linearizationsize} shows 
by how much~\texttt{MinLin} and \texttt{Greedy} change the size of the linearizations in comparison with~\texttt{Seq} for the \texttt{mult3} and \texttt{mult4} instances. All plots are cumulative and show the proportion of instances (in the~$x$-axis) achieving a reduction that is at least as large as the value indicated in the~$y$-axis. 
\begin{figure}[ht!]
\centering
\subfloat[][\centering \texttt{mult3} instances \label{fig:size_mult3}]{%
  \includegraphics[scale=0.35]{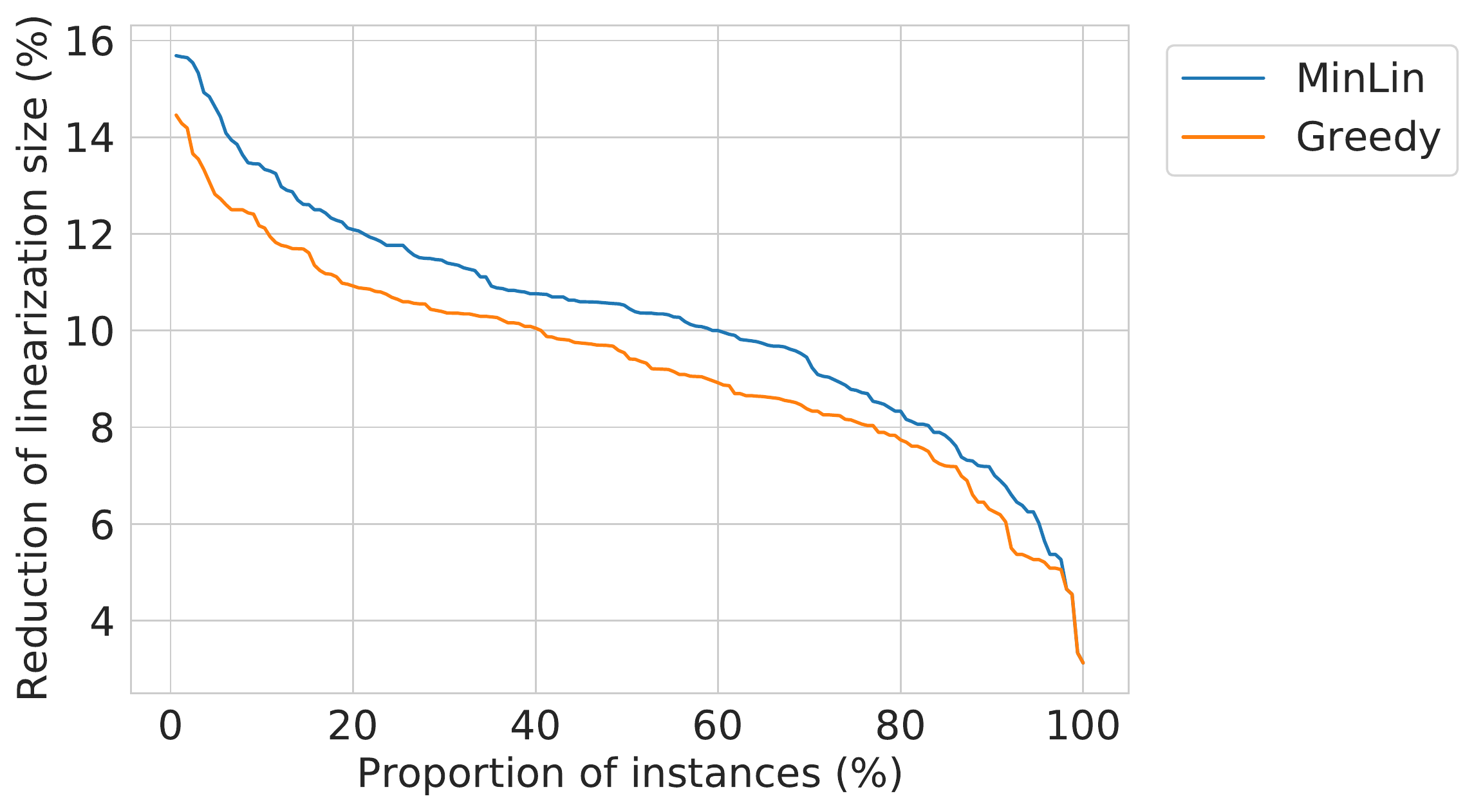}%
}
\subfloat[][\centering  \texttt{mult4} instances \label{fig:size_mult4}]{%
  \includegraphics[scale=0.35]{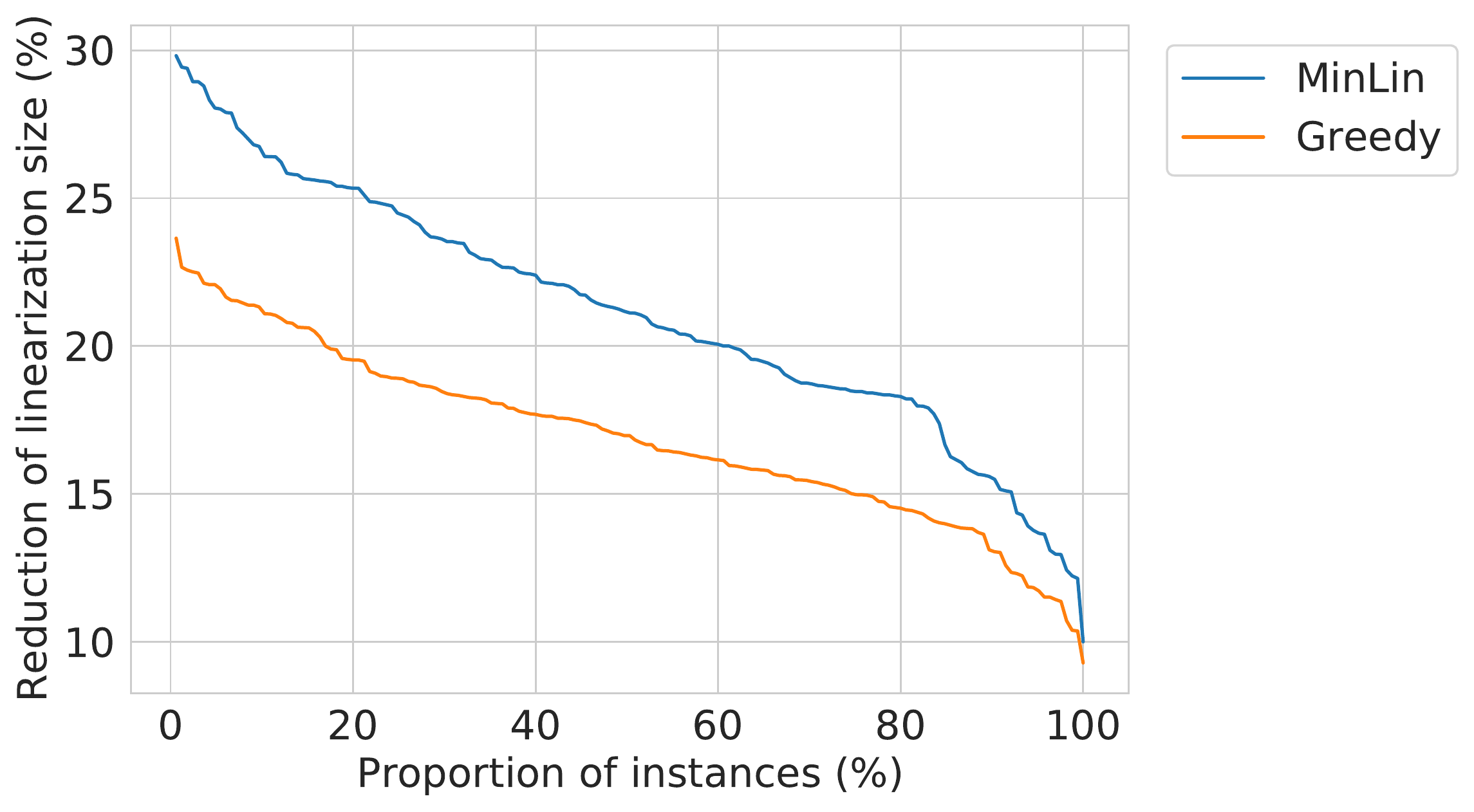}%
}
\caption{Reduction in the number of variables in comparison with \texttt{Seq} categorized by data set.}
\label{fig:linearizationsize}
\end{figure}
The results 
show that both \texttt{MinLin} and \texttt{Greedy} identify linearizations that are significantly smaller than the linearizations of \texttt{Seq}. Moreover, \texttt{MinLin} is consistently better than \texttt{Greedy}, with more pronounced differences in the~\texttt{mult4} instances.
In contrast, the structure of the \texttt{vision} and \texttt{autocorr} instances lead to stable results, i.e., the impact of the dimensions of these instances on the relative performance of the algorithms is negligible, so we omit these plots. In the case of \texttt{vision}, \texttt{Seq} delivers minimum linearizations already, so \texttt{MinLin} brings no gains; in contrast, the linearizations produced by~\texttt{Greedy} have 15\% more variables. Finally, all algorithms deliver linearizations of the same size for all \texttt{autocorr} instances.

Figure~\ref{fig:linearizationtime} shows the performance profiles of~\texttt{MinLin} for each data set. Each plot is divided into two parts. On the left, we report the percentage of instances that were solved to optimality (in the $y$-axis) within the amount of time indicated in the $x$-axis; the largest value of~$x$ is 30 seconds, which is the time limit we set for~\texttt{MinLin}. On the right, we indicate the percentage of instances for which \textbf{Gurobi} obtained an optimality gap inferior to the value indicated in the~$x$-axis; we assume that instances solved to optimality have an optimality gap equal to 0, so the rightmost part of the plot is the natural extension of the leftmost part. 
\begin{figure}[ht!]
\centering
\subfloat[][\centering \texttt{mult3} instances \label{fig:linearizationtime_mult3}]{%
  \includegraphics[scale=0.35]{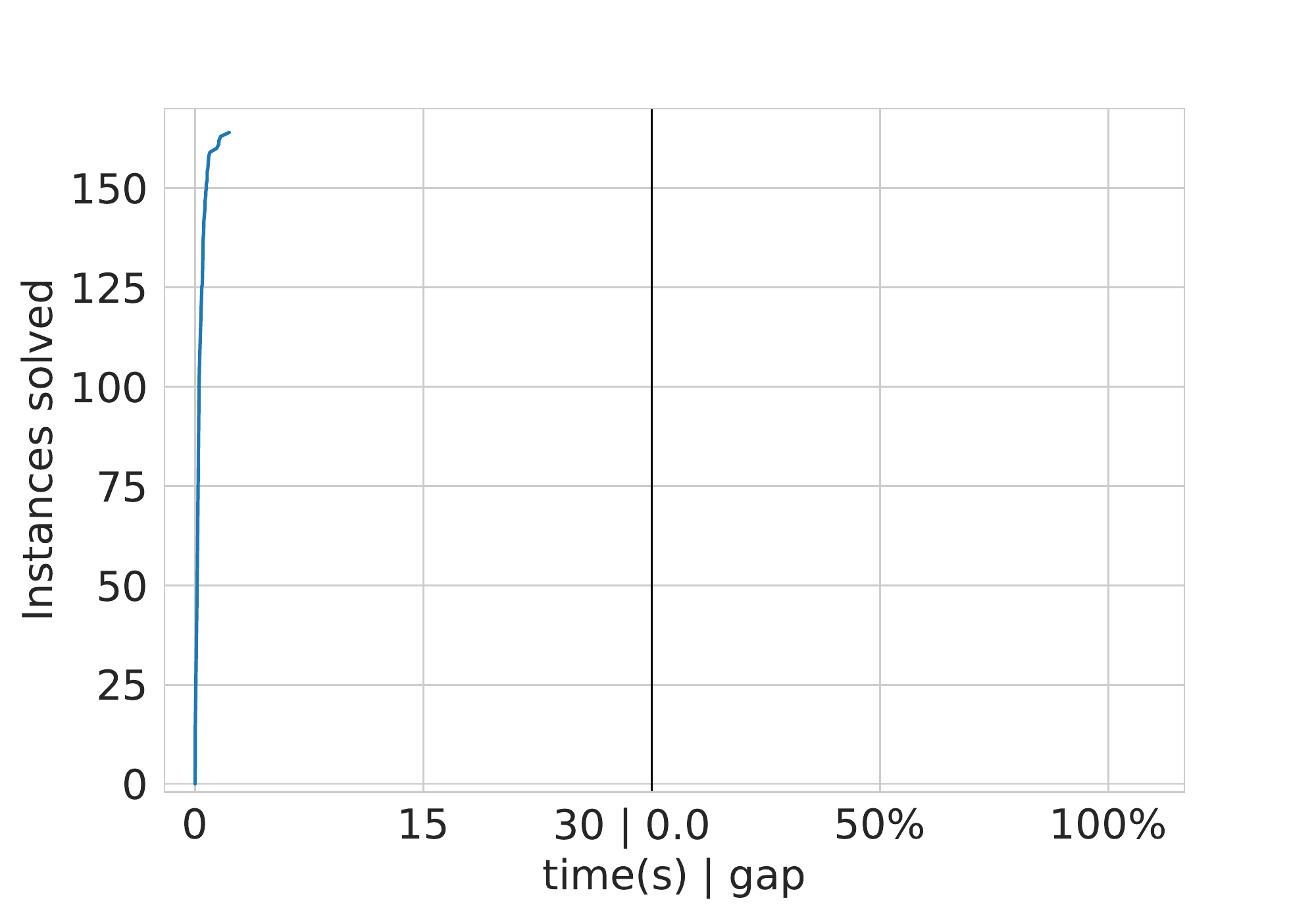}%
}
\subfloat[][\centering  \texttt{mult4} instances \label{fig:linearizationtime_mult4}]{%
  \includegraphics[scale=0.35]{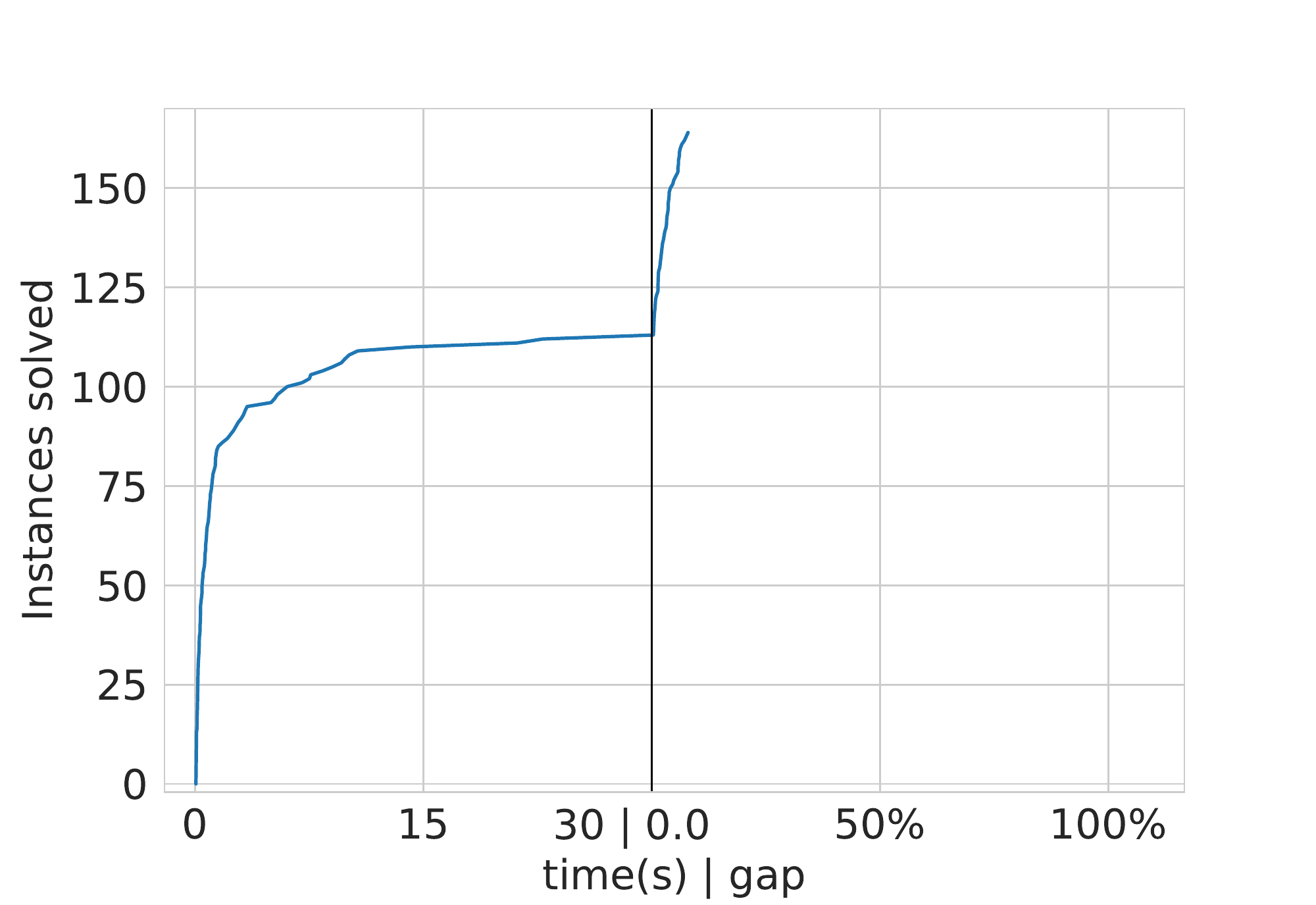}%
}\\
\subfloat[][\centering \texttt{vision} instances \label{fig:linearizationtime_vision}]{%
  \includegraphics[scale=0.34]{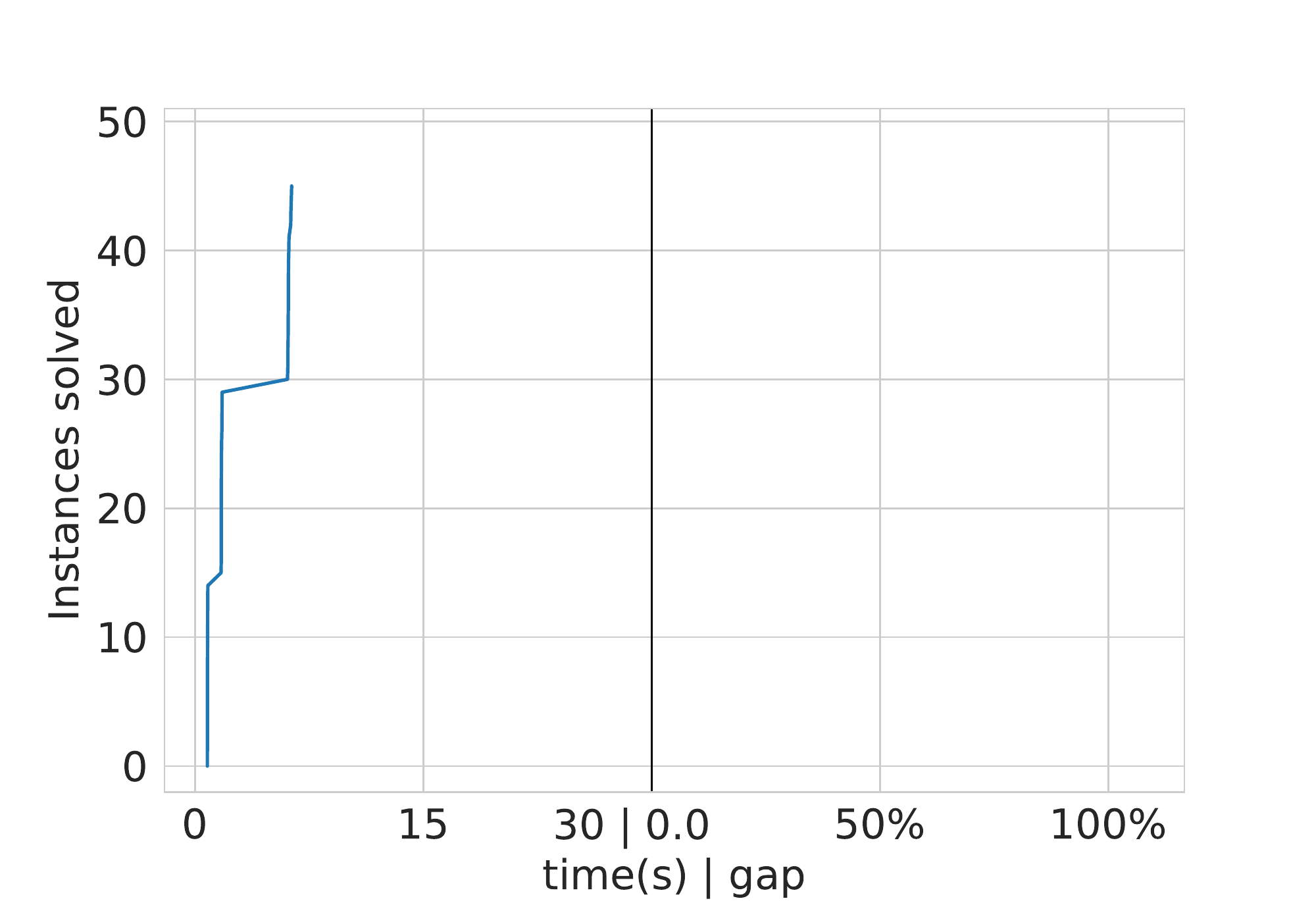}%
}
\subfloat[][\centering \texttt{autocorr} instances \label{fig:linearizationtime_autocorr}]{%
  \includegraphics[scale=0.34]{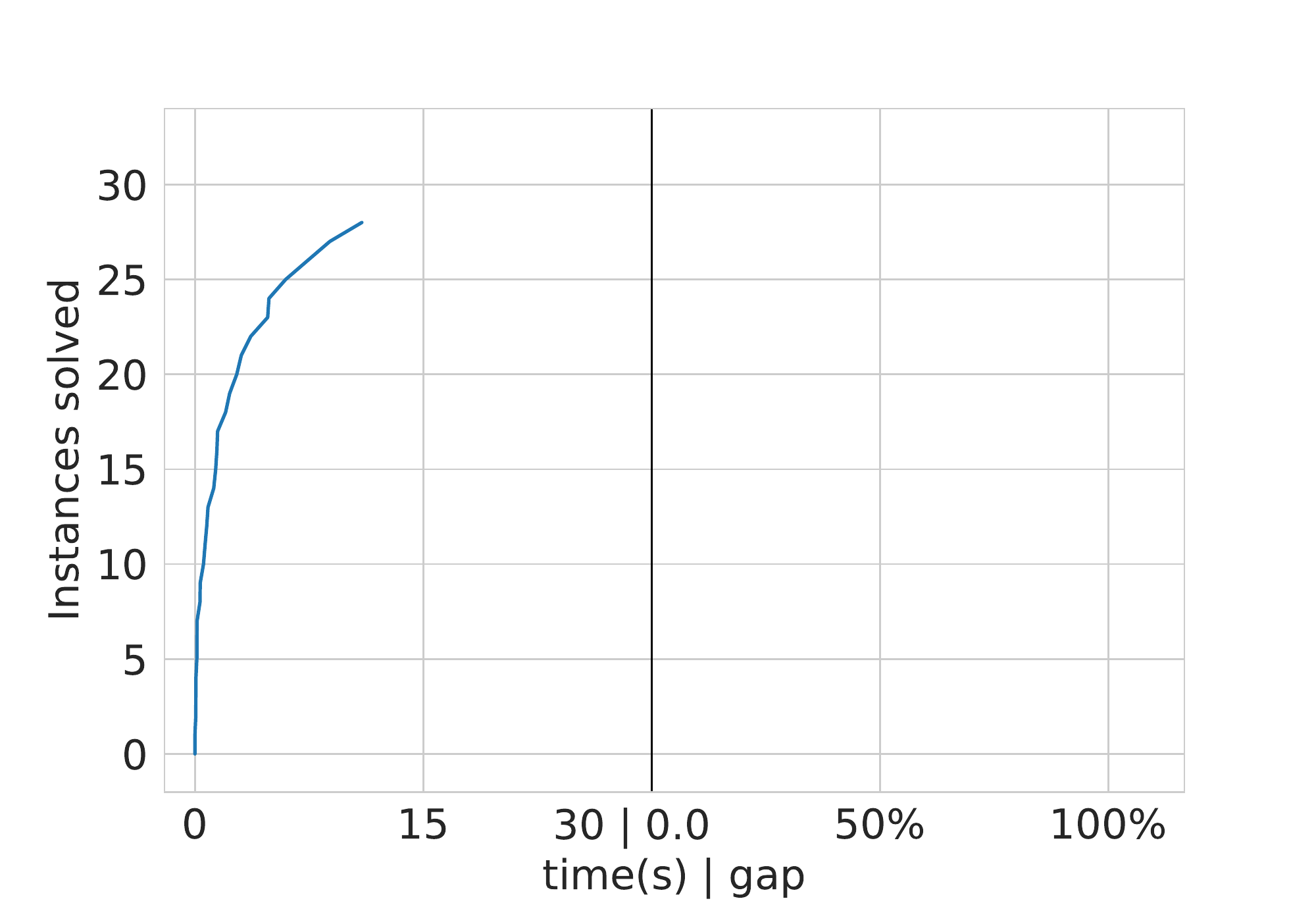}%
}
\caption{Cumulative performance plots of \texttt{MinLin} categorized by data set.}
\label{fig:linearizationtime}
\end{figure}
\texttt{MinLin} delivers strong performance and identifies a minimum linearization within less than 15 seconds for all instances in \texttt{mult3}, \texttt{vision}, and \texttt{autocorr}. In contrast, Figure~\ref{fig:size_mult4} shows that some instances of \texttt{mult4} cannot be solved to optimality within the time limit. Interestingly, we observe a correlation of 0.72 between the difference in the sizes of the linearizations produced by \texttt{Greedy} and \texttt{MinLin} and the runtime of \texttt{MinLin}, i.e., the harder instances benefit the most from an exact approach.


\subsection{Relaxation Bounds}
\label{sec:experiments_root_node_relaxations}

Next, we analyze the quality of the LP bounds of~\eqref{polyoptrml} corresponding to the minimum linearizations. More precisely, we calculate the  root-node (relaxation) gaps by comparing the LP bound 
of each algorithm \texttt{Alg} in $\{\texttt{Seq}, \texttt{Greedy},  \texttt{MinLin},  \texttt{BB}\}$ with the LP bound delivered by~\texttt{Full} 
as follows:
\begin{equation*}
\label{root_node_gap}
\text{Root-node gap} = \left(\dfrac{f_{\texttt{Full}} - f_{\texttt{Alg}}}{\max(|f_{\text{Full}}|, 10^{-3})}\right) \times 100,
\end{equation*}
where~$f_{\texttt{Alg}}$ is the root-node relaxation delivered by~\eqref{epolyopt} using the linearization of~$\texttt{Alg}$. Observe that~\texttt{Full} delivers the tightest 
formulation~\eqref{polyoptrml}, so~$f_{\texttt{Alg}} \leq f_{\texttt{Full}}$ holds for every instance. 

The results are presented in Figure~\ref{fig:root_node_gap}.
\begin{figure}[ht!]
\centering
\subfloat[][\centering \texttt{mult3} instances \label{fig:gap_d3}]{%
\includegraphics[scale=0.35]{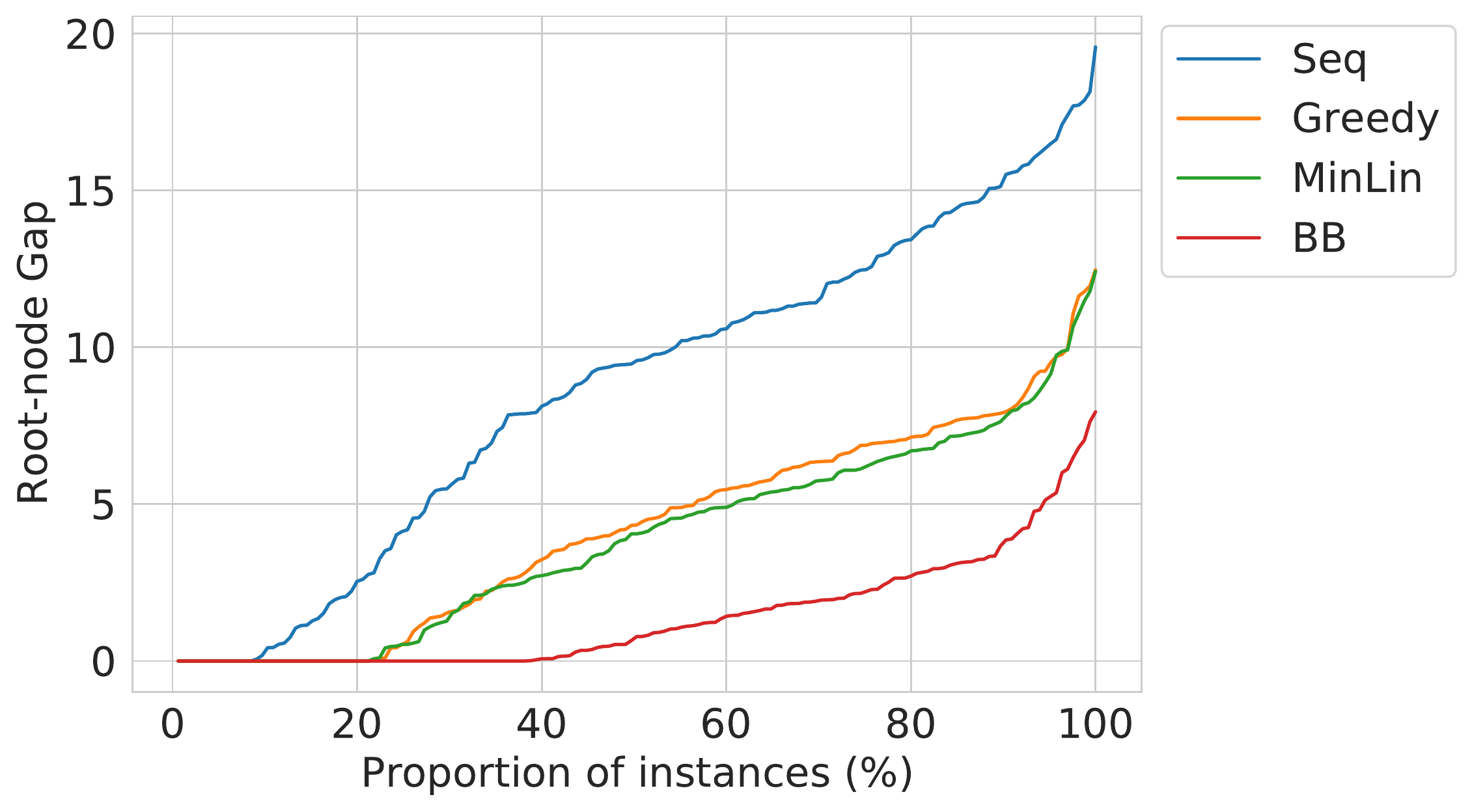}
}
\subfloat[][\centering \texttt{mult4} instances \label{fig:gap_d4}]{%
\includegraphics[scale=0.35]{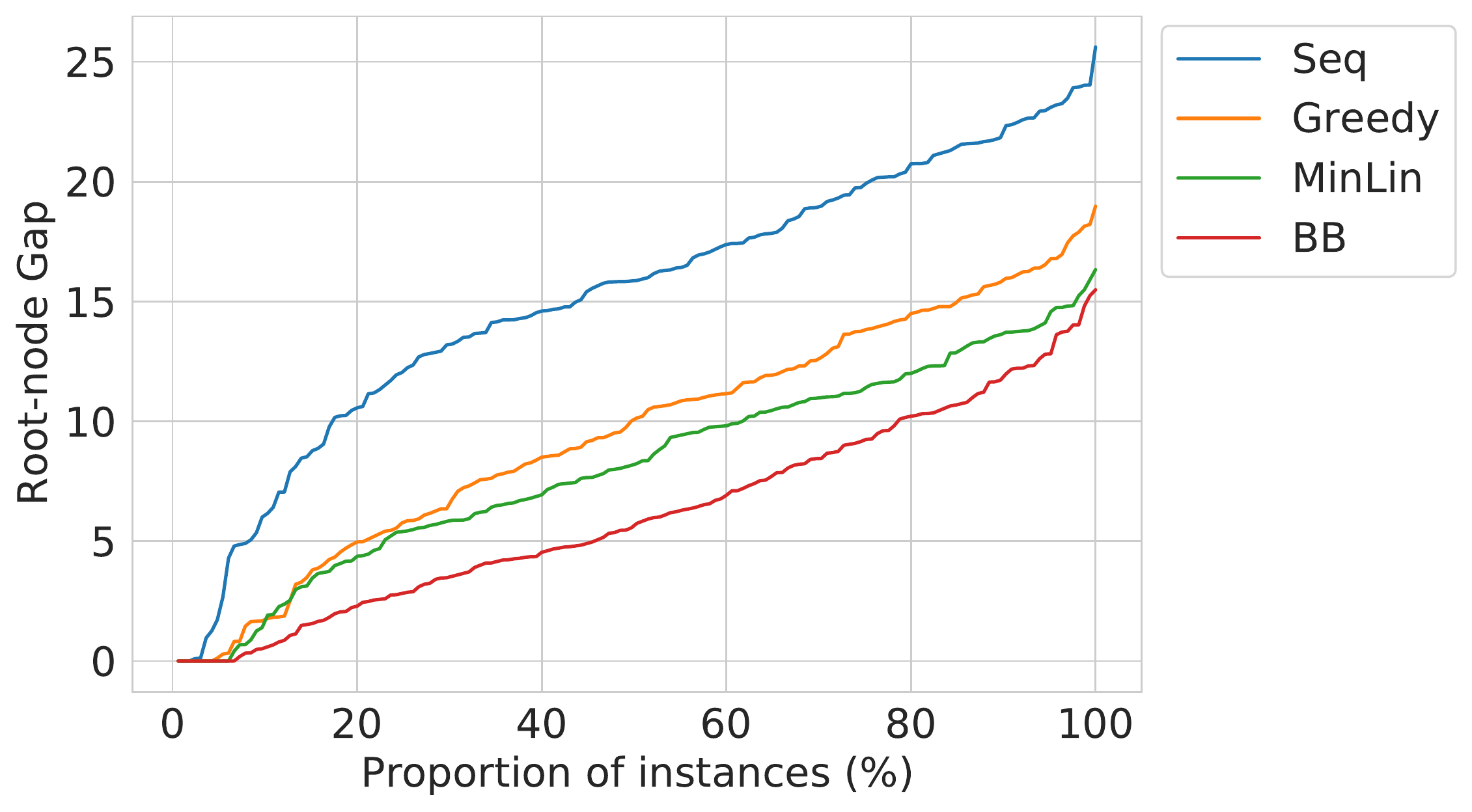}
}\\
\subfloat[][\centering \texttt{vision} instances \label{fig:gap_vision}]{%
\includegraphics[scale=0.35]{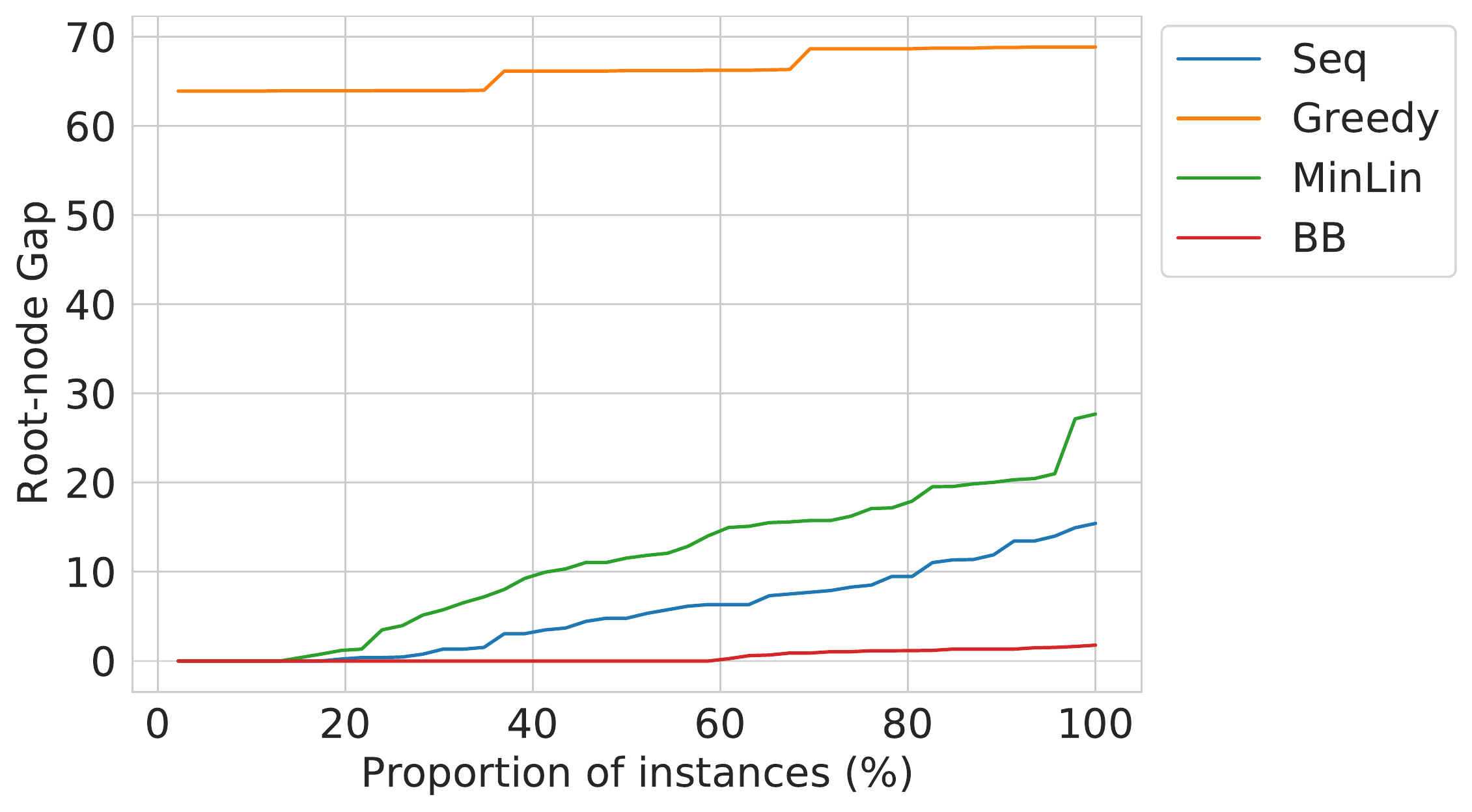}
}
\subfloat[][\centering \texttt{autocorr} instances \label{fig:gap_autocorr}]{%
\includegraphics[scale=0.35]{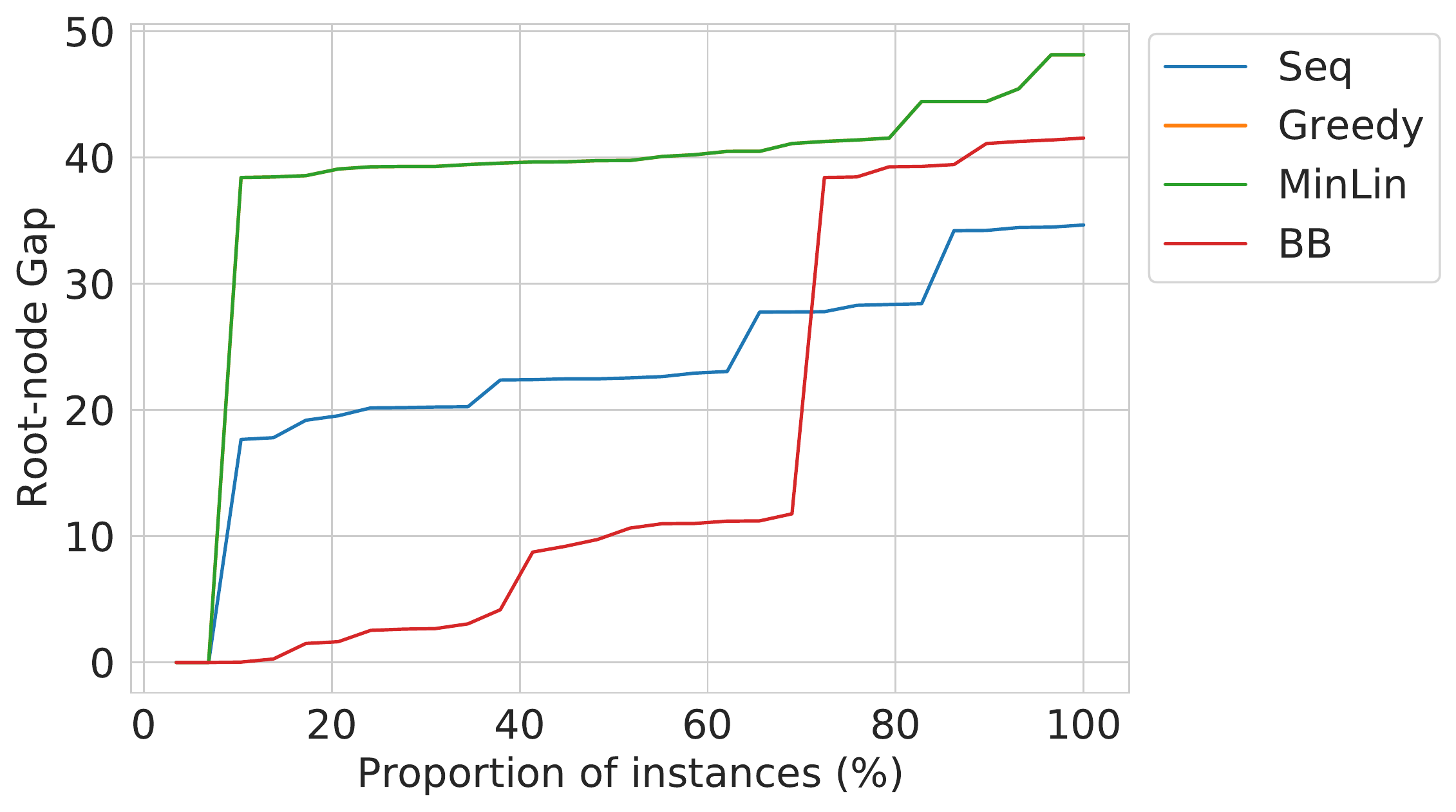}
}
\caption{Root-node Gap of all algorithms categorized by data set.}
\label{fig:root_node_gap}
\end{figure} 
The performances of the algorithms on the \texttt{mult3} and \texttt{mult4} instances are similar; \texttt{Seq} is the worst and \texttt{BB} is the best, whereas \texttt{Greedy} is slightly superior to~\texttt{MinLin}. The results for the \texttt{vision} instances are similar, with the remarkable exception of \texttt{Greedy}, which performs very poorly. These results show that \texttt{Greedy} is not only suboptimal with respect to the size of the linearization (see Example~\ref{ex:vision_greedy}), but may also deliver poor relaxation bounds. Finally, \texttt{Seq} and \texttt{Greedy} deliver exactly the same results for all instances in \texttt{autocorr}, which eventually is superior to both \texttt{MinLin} and \texttt{BB}.

\subsection{Experiments with global optimization solvers}
\label{sec:experiments_global_solvers}

Next, we report the results of our experiments for the entire optimization pipeline. Namely, we solve each instance by computing a set~$T$ of linearization triples first, using \texttt{Seq}, \texttt{Greedy}, \texttt{MinLin}, \texttt{BB}, or \texttt{Full}, and then we solve the following quadratically-constrained program (QCP) using~$T$. 
\begin{equation}
\begin{aligned}
   & \min && \sum_{i \in [\nmonomials]} \alpha_i y_{\indexset_i} \\
    & \text{s.t.} && y_{\indexset_1 \cup \indexset_2} = 
    y_{\indexset_1} y_{\indexset_2}, && \forall \left( {\indexset_1},
    {\indexset_2}, {\indexset_1 \cup \indexset_2}  \right)  \in T  \\
    &&& y_{\indexset} \in [0,1]^\nvariables, && \forall \indexset \in \graphnodes.
\end{aligned} \label{qcp}
\end{equation}

We use \textbf{Gurobi} to solve~\eqref{qcp}, so the QCP is obtained from~\eqref{polyopt} given a triple set~$T$ through the application of a McCormick linearization for each triple in~$T$ as a pre-processing operation. The runtime is limited at 10 minutes, from which we deduct the time spent to find a minimum linearization (in the case of \texttt{MinLin}) and a best-bound linearization (in the case of \texttt{MinLin} and \texttt{BB}). We do not consider the time spent on the construction of the models. We report the optimality gaps using the same expression adopted by \textbf{Gurobi}, i.e.,  we use 
\begin{equation*}
\text{Gap} = \left(\dfrac{f_{\texttt{ub}} - f_{\texttt{lb}}}{\max(|f_{\text{ub}}|, 10^{-3})}\right) \times 100,
\end{equation*}
where~$f_{\texttt{ub}}$ and $f_{\texttt{lb}}$ are the best upper and lower bound, respectively, obtained  within the time limit. 

Figure~\ref{fig:profiles_time_gap1} and~\ref{fig:profiles_time_gap2}
present the 
performance profiles of all algorithms. These plots are similar to those in Figure~\ref{fig:linearizationtime}, but we tailor the scales and the presentation for each data set in order to better exhibit the most relevant information.

Figure~\ref{fig:profiles_time_gap1} shows the results for \texttt{mult3} and \texttt{mult4}. The performance of the algorithms on these instances have an extreme behavior; they are either solved  to optimality within the time limit or no meaningful (i.e., finite) gap is identified. Therefore, we restrict the performance profile only to the runtime part; moreover, both coordinates are presented in log-scale.
\begin{figure}[htp]
\centering
\subfloat[][\centering \texttt{mult3} \label{fig:qcp_d3}]{%
  \includegraphics[scale=0.35]{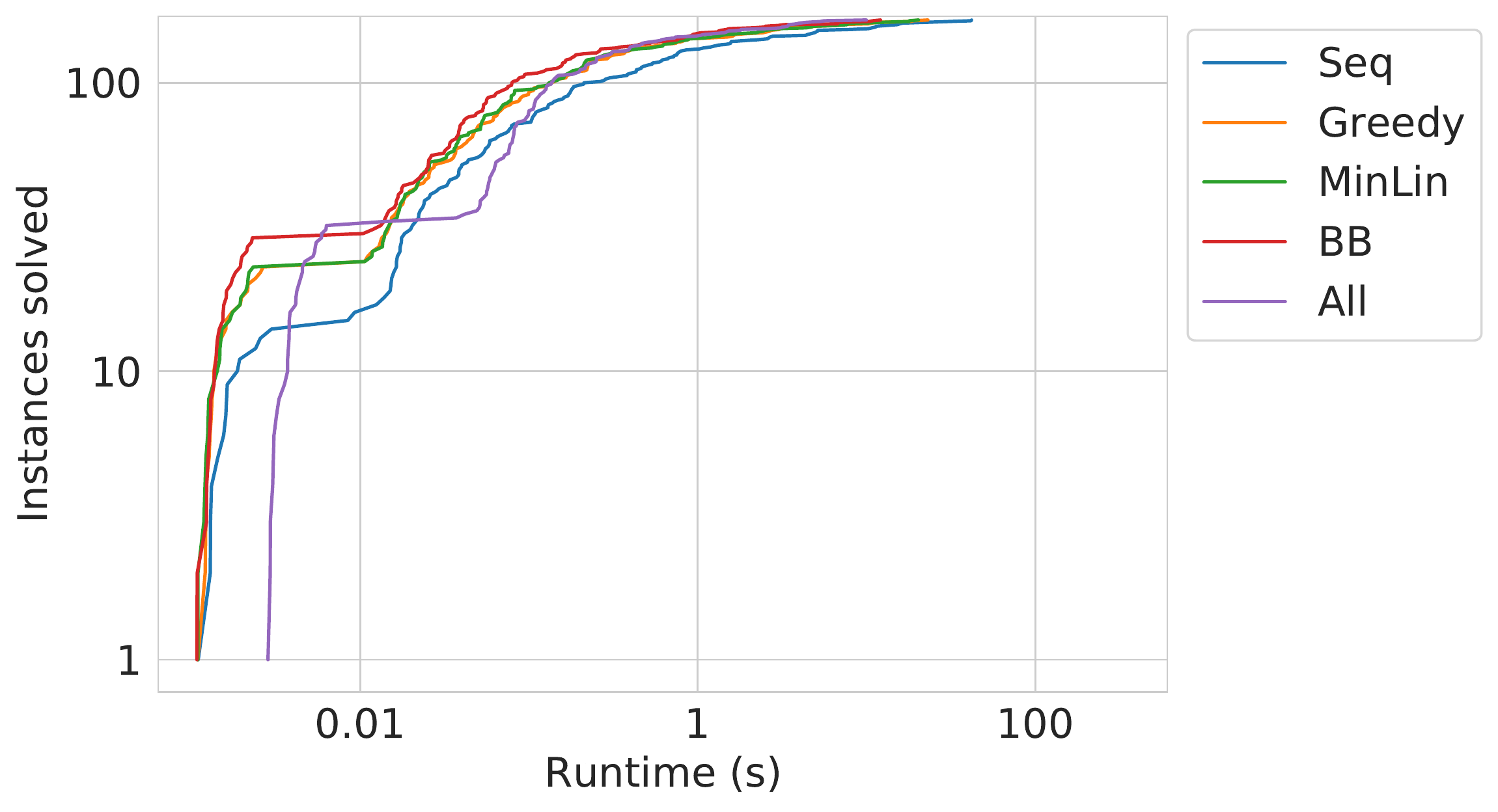}%
}
\subfloat[][\centering \texttt{mult4} \label{fig:qcp_d4}]{%
  \includegraphics[scale=0.35]{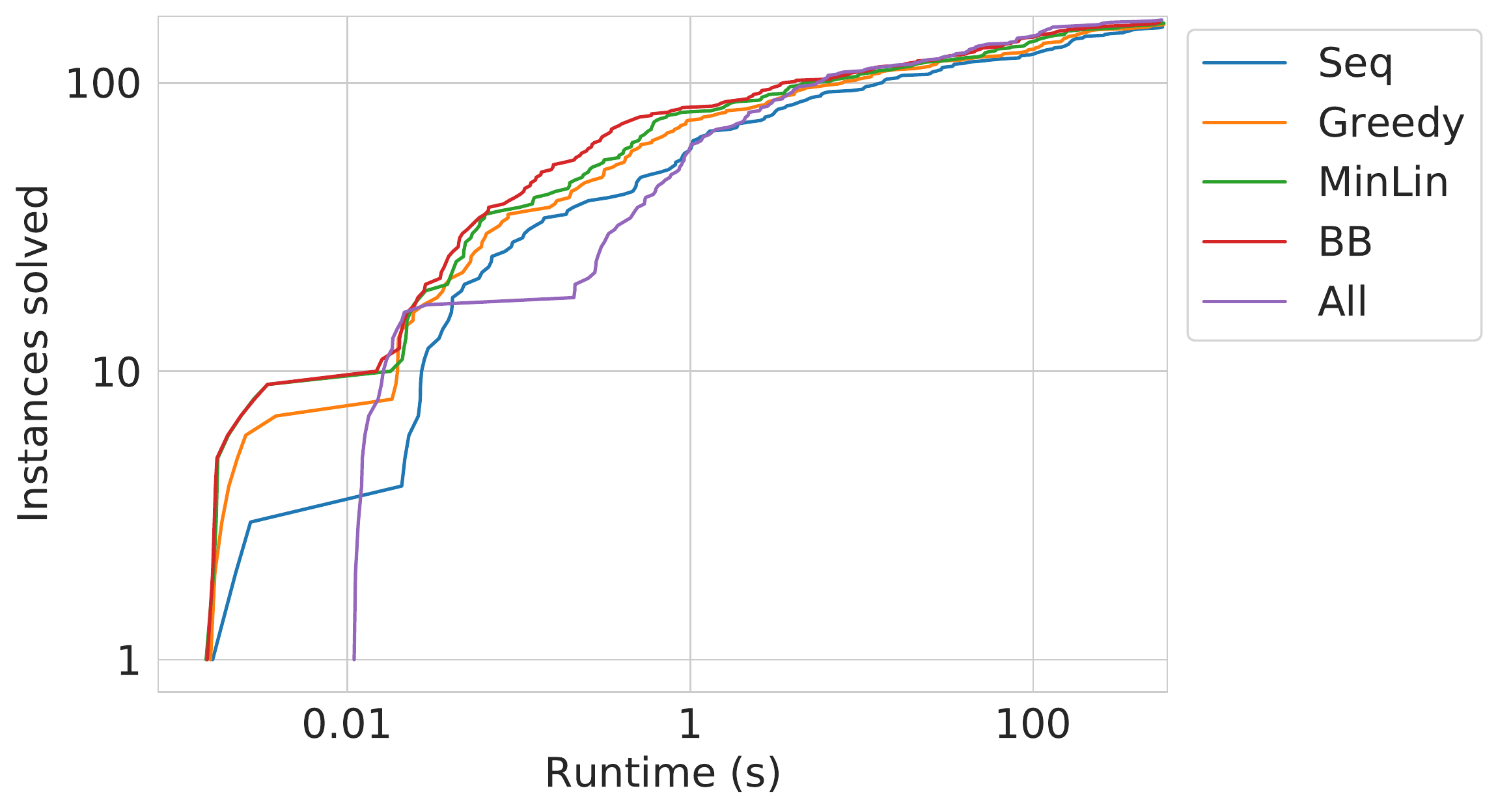}%
}
\caption{Comparison between the different linearizations when solving~\eqref{qcp}  (\texttt{mult3} and \texttt{mult4}).}
\label{fig:profiles_time_gap1}
\end{figure}
 
 Overall, the results  show that \texttt{BB} delivers solid performance and even beats \texttt{Full} for small runtimes. For \texttt{mult3} instances, all algorithms solve all instances to optimality, and \texttt{BB} has the best median solution time (0.05 seconds, versus 0.13 of \texttt{Seq}) and the second best average solution time (losing only to \texttt{All}). For \texttt{mult4},  \texttt{BB} closes the optimality gap for more instances than the other algorithms (except \texttt{All}) and has the smallest median runtime of~\texttt{BB} of 1.35 seconds (versus 3.6 seconds of \texttt{Seq}).

In contrast with \texttt{mult3} and \texttt{mult4}, the \texttt{vision} and \texttt{autocorr} instances are harder and could not be solved to optimality by any algorithm; the exceptions are two instances of \texttt{autocorr}, which are solved in a negligible amount of time by all algorithms. Therefore, we only report the optimality gaps for these data sets, using linear scale on both coordinates. 
\begin{figure}[htp]
\centering
\subfloat[][\centering \texttt{vision} \label{fig:qcp_vision}]{%
  \includegraphics[scale=0.35]{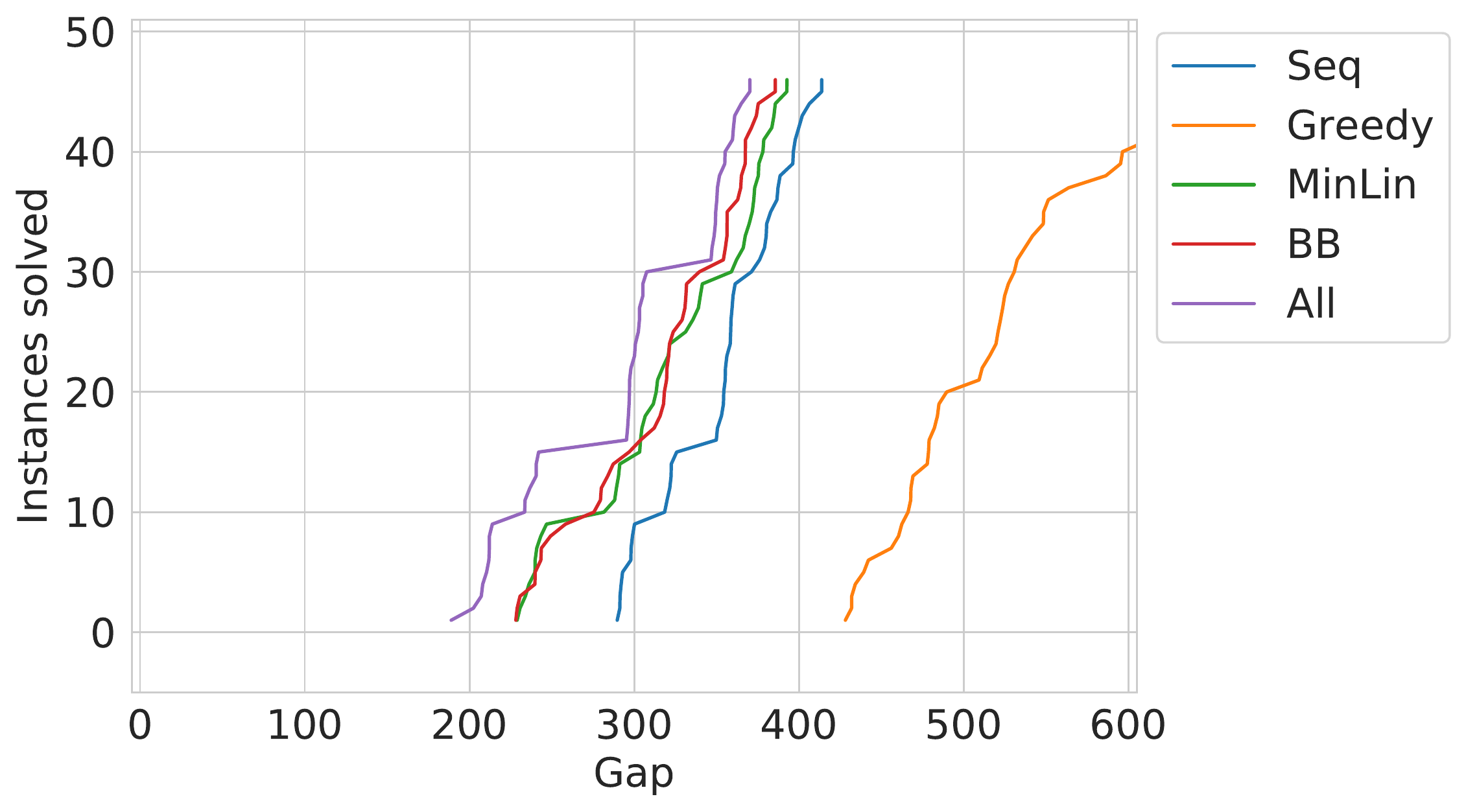}%
}
\subfloat[][\centering \texttt{autocorr}\label{fig:qcp_autocorr}]{%
  \includegraphics[scale=0.35]{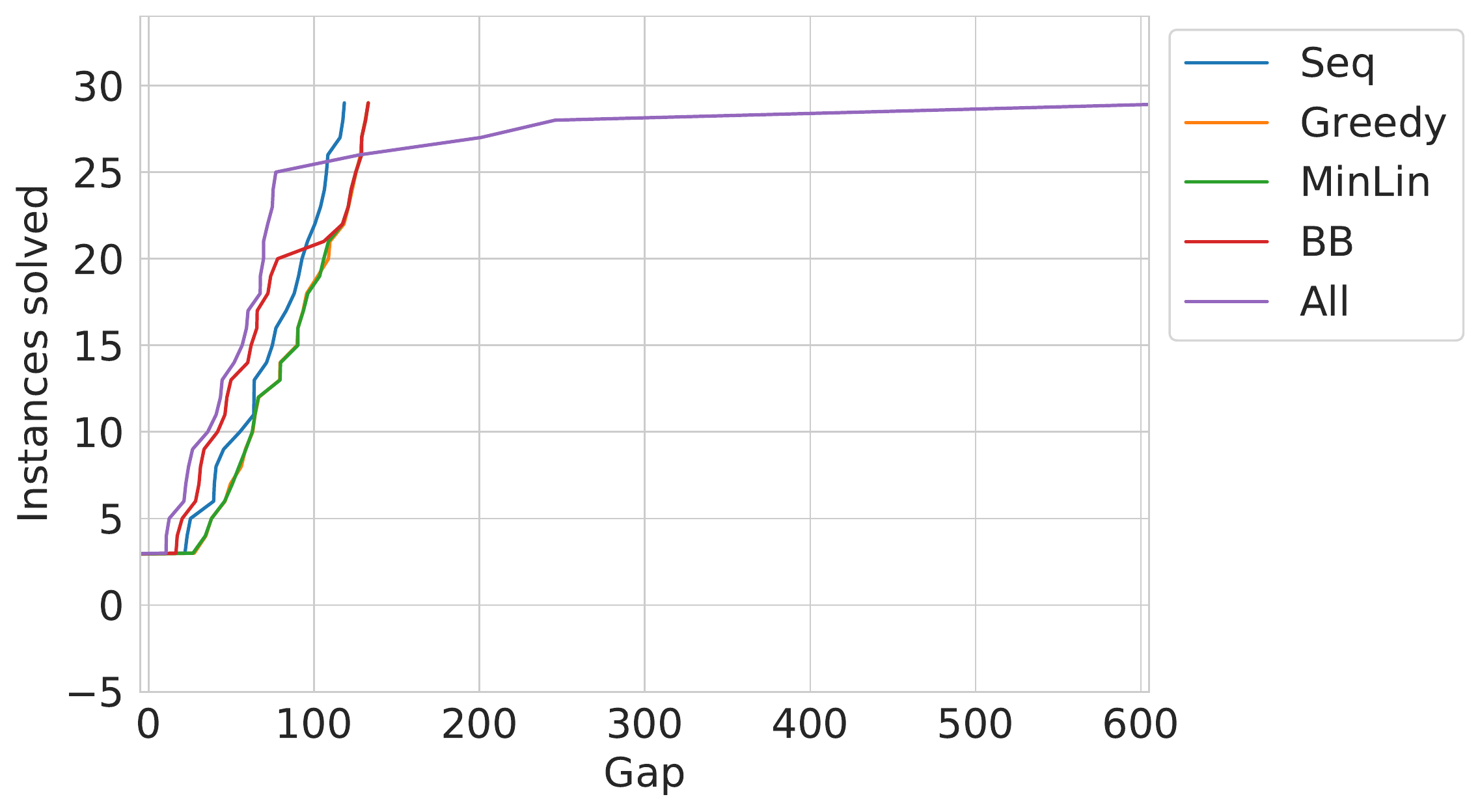}%
}
\caption{Comparison between the different linearizations when solving \eqref{qcp} (\texttt{vision} and \texttt{autocorr}).}
\label{fig:profiles_time_gap2}
\end{figure}

The results show that \texttt{All} is overall better; the result is not surprising, as a model with all triples is expected to be tighter. In contrast, \texttt{Greedy} is notably worse than all the other algorithms on the \texttt{vision} instances (see Example~\ref{ex:vision_greedy}). Both \texttt{BB} and \texttt{MinLin} consistently outperform \texttt{Seq} on the \texttt{vision} instances; in contrast, their performance is relatively similar for the~\texttt{autocorr} instances.

\section{Conclusion and Future Work}\label{sec:Conclusion}

In this work, we present a systematic investigation of linearization techniques of multilinear programs based on Recursive McCormick Relaxations. More precisely, we design algorithms to identify optimal linearizations  using two criteria: number of linearization terms and strength of the LP relaxation bound. The identification of a minimum-size linearization is shown to be NP-hard, and a greedy approach to the problem can deliver arbitrarily bad results, so we present an exact algorithm. We explore structural properties of the problem to derive a MIP that identifies a linearization of bounded cardinality delivering the best relaxation bound. Our algorithms are computationally more expensive than the linearization techniques used by the state-of-the-art nonlinear optimization solvers, but our computational results show that the additional computational overhead is compensated by the strength of the resulting linearized model, resulting in faster overall computational time. 

Our results are restricted to unconstrained multilinear programs, with either continuous and binary variables. One can easily adapt our algorithms to solve instances with linear or multilinear constraints, but preliminary experiments suggest that the impact of our algorithms is not as notable as in the unconstrained case, especially for linearizations that minimize the relaxation bound. The investigation and extension of our ideas to constrained problems can lead to an interesting research direction.


\bibliographystyle{plainnat} 
\bibliography{bib} 

\begin{thebibliography}{36}
\providecommand{\natexlab}[1]{#1}
\providecommand{\url}[1]{\texttt{#1}}
\expandafter\ifx\csname urlstyle\endcsname\relax
  \providecommand{\doi}[1]{doi: #1}\else
  \providecommand{\doi}{doi: \begingroup \urlstyle{rm}\Url}\fi

\bibitem[Ahmadi and Majumdar(2016)]{AhmadiMajumdar2016}
A.A. Ahmadi and A.~Majumdar.
\newblock Some applications of polynomial optimization in operations research
  and real-time decision making.
\newblock \emph{Optimization Letters}, 10, 2016.

\bibitem[Bao et~al.(2009)Bao, Sahinidis, and Tawarmalani]{Bao2009}
Xiaowei Bao, Nikolaos~V. Sahinidis, and Mohit Tawarmalani.
\newblock {Multiterm polyhedral relaxations for nonconvex, quadratically
  constrained quadratic programs}.
\newblock \emph{Optimization Methods and Software}, 24\penalty0 (4-5):\penalty0
  485--504, 2009.
\newblock ISSN 10556788.

\bibitem[Bao et~al.(2015)Bao, Khajavirad, Sahinidis, and
  Tawarmalani]{bao2015global}
Xiaowei Bao, Aida Khajavirad, Nikolaos~V Sahinidis, and Mohit Tawarmalani.
\newblock Global optimization of nonconvex problems with multilinear
  intermediates.
\newblock \emph{Mathematical Programming Computation}, 7\penalty0 (1):\penalty0
  1--37, 2015.

\bibitem[Belotti et~al.(2009{\natexlab{a}})Belotti, Lee, Liberti, Margot, and
  W{\"a}chter]{BeLeLiMaWa08}
P.~Belotti, J.~Lee, L.~Liberti, F.~Margot, and A.~W{\"a}chter.
\newblock Branching and bounds tightening techniques for non-convex {MINLP}.
\newblock \emph{Optimization Methods and Software}, 24\penalty0 (4-5):\penalty0
  597--634, 2009{\natexlab{a}}.

\bibitem[Belotti et~al.(2009{\natexlab{b}})Belotti, Lee, Liberti, Margot, and
  W\"{a}chter]{couenne}
P.~Belotti, J.~Lee, L.~Liberti, F.~Margot, and A.~W\"{a}chter.
\newblock Branching and bounds tightening techniques for non-convex minlp.
\newblock \emph{Optimization Methods \& Software}, 24:\penalty0 597--634,
  2009{\natexlab{b}}.

\bibitem[Belotti et~al.(2013)Belotti, Cafieri, Lee, Liberti, and
  Miller]{BelottiCafieri2013}
P.~Belotti, S.~Cafieri, J.~Lee, L.~Liberti, and A.~J. Miller.
\newblock On the composition of convex envelopes for quadrilinear terms.
\newblock In \emph{Optimization, Simulation, and Control}, volume~76 of
  \emph{Springer Optimization and Its Applications}, pages 1--16. Springer,
  2013.

\bibitem[Burer and Saxena(2012)]{burer2012milp}
Samuel Burer and Anureet Saxena.
\newblock The milp road to miqcp.
\newblock \emph{Mixed integer nonlinear programming}, pages 373--405, 2012.

\bibitem[Buss and Goldsmith(1993)]{buss1993nondeterminism}
Jonathan~F Buss and Judy Goldsmith.
\newblock Nondeterminism within $\texttt{P}^{*}$.
\newblock \emph{SIAM Journal on Computing}, 22\penalty0 (3):\penalty0 560--572,
  1993.

\bibitem[Cafieri et~al.(2010)Cafieri, Lee, and Liberti]{Cafieri2010}
Sonia Cafieri, Jon Lee, and Leo Liberti.
\newblock {On convex relaxations of quadrilinear terms}.
\newblock \emph{Journal of Global Optimization}, 47\penalty0 (4):\penalty0
  661--685, 2010.
\newblock ISSN 15732916.

\bibitem[Crama and Rodríguez-Heck(2017)]{crh:17}
Y.~Crama and E.~Rodríguez-Heck.
\newblock A class of valid inequalities for multilinear 0–1 optimization
  problems.
\newblock \emph{Discrete Optimization}, 25:\penalty0 28--47, 2017.

\bibitem[Crama(1993)]{Crama1993}
Yves Crama.
\newblock {Concave extensions for nonlinear 0-1 maximization problems}.
\newblock \emph{Mathematical Programming}, 61\penalty0 (1-3):\penalty0 53--60,
  1993.
\newblock ISSN 00255610.

\bibitem[{Del Pia} and Khajavirad(2021)]{DelPia2021}
A.~{Del Pia} and A.~Khajavirad.
\newblock The running intersection relaxation of the multilinear polytope.
\newblock \emph{Mathematics of Operations Research}, 46\penalty0 ((3):\penalty0
  1008--1037, 2021.

\bibitem[{Del Pia} et~al.(2020){Del Pia}, Khajavirad, and Sahinidis]{dpks:20}
A.~{Del Pia}, A.~Khajavirad, and N.~V. Sahinidis.
\newblock On the impact of running intersection inequalities for globally
  solving polynomial optimization problems.
\newblock \emph{Mathematical programming computation}, 12:\penalty0 165--191,
  2020.

\bibitem[{Del Pia} and Khajavirad(2018)]{DelPia2018}
Alberto {Del Pia} and Aida Khajavirad.
\newblock {On decomposability of Multilinear sets}.
\newblock \emph{Mathematical Programming}, 170\penalty0 (2):\penalty0 387--415,
  2018.
\newblock ISSN 14364646.

\bibitem[Floudas(2000)]{Floudas00}
C.~Floudas.
\newblock \emph{Deterministic Global Optimization: Theory, Algorithms and
  Applications}.
\newblock Kluwer Academic Publishers, Dordrecht, 2000.

\bibitem[Floudas and Visweswaran(1993)]{FloudasViswa93}
C.~A. Floudas and V.~Visweswaran.
\newblock {Primal-relaxed dual global optimization approach}.
\newblock \emph{Journal of Optimization Theory and Applications}, 78\penalty0
  (2):\penalty0 187--225, 1993.
\newblock ISSN 00223239.

\bibitem[Fomin et~al.(2019)Fomin, Lokshtanov, Saurabh, and
  Zehavi]{fomin2019kernelization}
Fedor~V Fomin, Daniel Lokshtanov, Saket Saurabh, and Meirav Zehavi.
\newblock \emph{Kernelization: theory of parameterized preprocessing}.
\newblock Cambridge University Press, 2019.

\bibitem[Garey and Johnson(1979)]{garey1979computers}
Michael~R Garey and David~S Johnson.
\newblock Computers and intractability.
\newblock \emph{A Guide to the}, 1979.

\bibitem[Glover and Woolsey(1974)]{Glover1974}
Fred Glover and Eugene Woolsey.
\newblock {Converting the 0-1 Polynomial Programming Problem to a 0-1 Linear
  Program}.
\newblock \emph{Operations Research}, 22\penalty0 (1):\penalty0 180--182, 1974.
\newblock ISSN 0030-364X.

\bibitem[{Gurobi Optimization, LLC}(2022)]{gurobi}
{Gurobi Optimization, LLC}.
\newblock {Gurobi Optimizer Reference Manual}, 2022.
\newblock URL \url{https://www.gurobi.com}.

\bibitem[Horst and Tuy(1996)]{HorstTuy}
R.~Horst and H.~Tuy.
\newblock \emph{Global Optimization: Deterministic Approaches}.
\newblock Springer-Verlag, Berlin, Heidelberg, Germany, 3rd edition, 1996.

\bibitem[Karp(1972)]{karp1972reducibility}
Richard~M Karp.
\newblock Reducibility among combinatorial problems.
\newblock In \emph{Complexity of computer computations}, pages 85--103.
  Springer, 1972.

\bibitem[Lee et~al.(2018)Lee, Skipper, and Speakman]{LeeSkipper2018}
J.~Lee, D.~Skipper, and E.~Speakman.
\newblock Algorithmic and modeling insights via volumetric comparison of
  polyhedral relaxations.
\newblock \emph{Mathematical Programming}, 170\penalty0 (1):\penalty0 121--140,
  2018.

\bibitem[Luedtke et~al.(2012)Luedtke, Namazifar, and
  Linderoth]{LuedtkeNamzifar2012}
J.~Luedtke, M.~Namazifar, and J.~Linderoth.
\newblock Some results on the strength of relaxations of multilinear functions.
\newblock \emph{Math. Program.}, 136:\penalty0 325–351, 2012.

\bibitem[Mccormick(1976)]{McCormick1976}
Garth~P. Mccormick.
\newblock Computability of global solutions to factorable nonconvex programs:
  Part i -- convex underestimating problems.
\newblock \emph{Math. Program.}, 10\penalty0 (1):\penalty0 147–175, dec 1976.
\newblock ISSN 0025-5610.

\bibitem[Rikun(1997)]{Rikun1997}
Anatoliy~D. Rikun.
\newblock {A Convex Envelope Formula for Multilinear Functions}.
\newblock \emph{Journal of Global Optimization}, 10\penalty0 (4):\penalty0
  425--437, 1997.
\newblock ISSN 09255001.

\bibitem[Ryoo and Sahinidis(2001)]{RyooSahinidis01}
H.~S. Ryoo and N.~V. Sahinidis.
\newblock Analysis of bounds for multilinear functions.
\newblock \emph{J. Glob. Optim.}, 19:\penalty0 403–424, 2001.

\bibitem[Sahinidis(1996)]{Sahinidis1996}
Nikolaos~V Sahinidis.
\newblock {BARON: A general purpose global optimization software package}.
\newblock \emph{Journal of Global Optimization}, 8\penalty0 (2):\penalty0
  201--205, 1996.
\newblock ISSN 1573-2916.

\bibitem[Sherali(1997)]{Sherali1997}
H~D Sherali.
\newblock {Convex envelopes of multilinear functions over a unit hypercube and
  over special discrete sets}.
\newblock \emph{Acta Mathematica Vietnamica}, 22\penalty0 (1):\penalty0
  245--270, 1997.
\newblock ISSN 0251-4184.

\bibitem[Sherali and Adams(1999)]{SheraliAdamsBook}
H.~D. Sherali and W.~P. Adams.
\newblock \emph{Reformulation-Linearization Techniques in Discrete and
  Continuous Optimization}.
\newblock Nonconvex Optimization and Its Applications. Kluwer Academic
  Publishers, Dordrecht, 1999.

\bibitem[Smith and Pantelides(1999)]{SmithPantelides99}
E.~Smith and C.~Pantelides.
\newblock A symbolic reformulation/spatial branch-and-bound algorithm for the
  global optimisation of nonconvex minlps.
\newblock \emph{Comput. Chem. Eng.}, 23:\penalty0 457–478, 1999.

\bibitem[Speakman and Averkov(2022)]{SpeakmanAverkov22}
E.~Speakman and G.~Averkov.
\newblock Computing the volume of the convex hull of the graph of a trilinear
  monomial using mixed volumes.
\newblock \emph{Discrete Applied Mathematics}, 308:\penalty0 36--45, 2022.

\bibitem[Speakman and Lee(2017)]{SpeakmanLee17}
E.~Speakman and J.~Lee.
\newblock {Quantifying double McCormick}.
\newblock \emph{Mathematics of Operations Research}, 42\penalty0 (4):\penalty0
  1230--1253, 2017.

\bibitem[Speakman et~al.(2017)Speakman, Yu, and Lee]{SpeakmanYu2017}
E.~Speakman, H.~Yu, and J.~Lee.
\newblock Experimental validation of volume-based comparison for
  double-mccormick relaxations.
\newblock In \emph{CPAIOR}, volume 10335 of \emph{LNCS}. Springer, 2017.

\bibitem[Tawarmalani(2010)]{Tawarmalani10}
M.~Tawarmalani.
\newblock Inclusion certificates and simultaneous convexification of functions.
\newblock \emph{Working paper, Krannert School of Management, Purdue
  University}, 2010.

\bibitem[Tawarmalani and Sahinidis(2002)]{baronbook}
M.~Tawarmalani and N.V. Sahinidis.
\newblock \emph{Convexification and global optimization in continuous and
  mixed-integer nonlinear programming: theory, algorithms, software, and
  applications}, volume~65.
\newblock Springer Science \& Business Media, 2002.

\end{thebibliography}

\newpage

 \appendix

 \section{Proofs of Section~\ref{sec:minimumlinearization}  }

 \begin{proof}{Proof of Proposition~\ref{prop:greedy_bad}:} The proof of this proposition relies on the structural results presented in Section~\ref{sec:complexity}.  Let~$B = (U,V,E)$ be bipartite graph such that~$|U| = k$ for some~$k \in \mathbb{N}$, and let~$V \coloneqq \bigcup\limits_{i = 1}^k V_i$ (i.e., $V$ is partitioned into subsets $V_1,V_2,\ldots,V_k$) whereby~$|V_i| = \lfloor \frac{k}{i} \rfloor$, $i \in [k]$. We construct~$E$ by assigning exactly~$i$ neighbors in~$U$ to
each vertex in~$V_i$, $i \in [k]$. Moreover, each vertex in~$U$ has at most one neighbor in~$V_i$, and we assign neighbors in~$U$ to vertices in~$V_i$ in a way that the maximum degree of any vertex in~$U$ is~$k-1$. We obtain an instance of the 3-\polyopt{}  by applying the same construction presented in the proof of Theorem~\ref{thm:nphard} to the graph~$B$.

The greedy algorithm proceeds by selecting, in each iteration, the vertex with the largest number of uncovered neighbors. By construction, all the~$\sum_{i = 1}^k |V_i|$ pairs associated with~$V$ are incorporated to the linearization by the greedy algorithm, so the size of the solution is~$|V| \approx  \sum_{i = 1}^k \lfloor \frac{k}{i} \rfloor = \Theta(k \ln k)$. In contrast, a (minimum) linearization for the same instances picks all the pairs associated with~$U$, which contains only~$k$ elements, i.e., the proper triple set identified by \texttt{Greedy} is asymptotically~$O(\ln k)$ times larger than a minimum-sized one.
\end{proof}

 \begin{proof}{Proof of Theorem~\ref{thm:reductionrules}:}
For~\ref{rule1}, observe that as~$\trinomial$ shares no variables with other triples in~$\trinomials$, all pairs in~$\pairs(\trinomial)$ can only cover~$\trinomial$. Therefore, any optimal solution of~$f(\bfx)$ has exactly one element of~$\pairs(\trinomial)$. After the exhaustive application of~\ref{rule1}, each~$\trinomial$ has at least one neighbor~$\pair$ of degree at least 2. As any optimal solution that uses a neighbor of~$\trinomial$ of degree 1 may be replaced for another solution of same cardinality (or smaller) by using a neighbor of~$\trinomial$ of degree 2 instead, it follows that we can remove all elements of~$\pairs$ of degree 1, i.e., we can apply~\ref{rule2}. The application of~\ref{rule1} and~\ref{rule2} may lead to configurations where an element~$\trinomial$ of~$\trinomials$ has only one neighbor in~$\pairs$. As any feasible solution must contain at least one element of~$\pairs(\trinomial)$ for each~$\trinomial$ in~$\trinomials$, we apply~\ref{rule3}. From the validity of the previous rules, it follows that there is at least one optimal solution that does not contain elements of~$\pairs$ without neighbors, so~\ref{rule4} is valid. Finally, \ref{rule5}
 follows from the fact that a vertex~$\trinomial$ of~$\trinomials$ cannot be covered by any element of~$\pairs$ that does not belong to the same connected component in~$G$.
\end{proof}

 \begin{proof}{Proof of Proposition~\ref{prop:structurereduced}:} \ref{property1}  follows from the fact that~$|\pairs(\trinomial)| = 3$ in~$G$ for any~$\trinomial$ in~$\trinomials^r$ and from~\ref{rule3}. \ref{property2} follows directly from~\ref{rule2}. For~\ref{property3}, observe that any pair of elements~$\pair_1,\pair_2$ in~$\pairs$ sharing the same neighbors must have 
 exactly one variable in common.
 Therefore, there are exactly three variables associated with~$\pair_1$ and~$\pair_2$, so it defines exactly one element of~$\trinomials$, i.e., $\trinomials$ cannot have two distinct elements that are simultaneously neighbors of both~$\pair_1$ and~$\pair_2$. Finally, \ref{property4}  follows directly from~\ref{property3}, as any vertex in~$\pairs^r \setminus \{\pair\}$ can cover at most one neighbor of~$\pair$. 
 \end{proof}

 \begin{proof}{Proof of Theorem~\ref{thm:nphard}:}
The result  follows from a reduction of the vertex cover problem. In the vertex cover problem, we are given a graph~$G = (V,E)$ and the goal is to identify a subset~$V'$ of~$V$ such that for each edge~$e = (u,v)$ in~$E$ we have~$u \in V'$ or~$v \in V'$ (or both). The vertex cover problem is part of Karp's list of NP-complete problems~(\cite{karp1972reducibility}), and the problem is known to be hard even in planar graphs of degree at most 3~(\cite{garey1979computers}). 

Let~$G = (V,E)$ be the graph associated with an arbitrary instance~$I$ of the vertex cover problem. We construct the reduced bipartite graph~$G^r = (\pairs^r,\trinomials^r,E^r)$ associated with an instance of the 3-\polyopt{} as follows. For each vertex~$v$ in~$V$, we have an element~$y   x_v$ in~$\pairs$, and for each edge~$e = (u,v)$ in~$E^r$ we have an element~$x_u    x_v    y$ in~$T$. Informally, each vertex in~$V$ is associated with a pair in~$\pairs^r$ and each edge in~$E^r$ is associated with a triple in~$\trinomials^r$. A complete construction would also require the inclusion of~$(x_u,x_v)$ in~$\pairs^r$ for each~$(u,v)$ in~$E^r$; however, it follows from~\ref{property2} that we do not need to include them in~$\pairs^r$, as there is at least one optimal solution of~$(\pairs^r,\trinomials^r)$ that does not use elements of~$\pairs^r$ of degree 1. Therefore, we build~$E^r$ as in~\S\ref{sec:dominatingset}, but taking into account the transformations in~\S\ref{sec:reductionrules}.  For an example, see Figure~\ref{fig:nphardnessexample}.

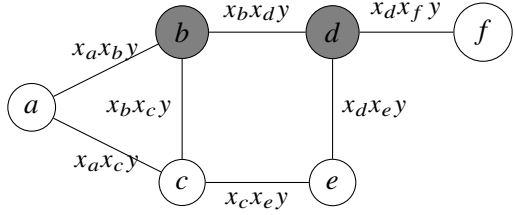
\begin{figure}
    \centering
\begin{tikzpicture}[main/.style = {draw, circle}] 

\node[main] (a) at (0,0) {$a$}; 
\node[main] (b)[fill=gray] at (2,1) {$b$};
\node[main] (c) at (2,-1) {$c$}; 
\node[main] (d)[fill=gray] at (4,1) {$d$};
\node[main] (e) at (4,-1) {$e$}; 
\node[main] (f) at (6,1) {$f$};

\path[every node/.style={font=\sffamily\small}]
        (a) edge node [above] {$x_a   x_b    y$} (b)
        (a) edge node [below] {$x_a    x_c    y$} (c)
        (b) edge node [left] {$x_b    x_c    y$} (c)
        (b) edge node [above] {$x_b    x_d    y$} (d)
        (c) edge node [below] {$x_c    x_e    y$} (e)
        (d) edge node [right] {$x_d    x_e    y$} (e)
        (d) edge node [above] {$x_d    x_f    y$} (f);

\end{tikzpicture} 
    
    \caption{Example of instance of the vertex cover problem for~$G = (V,E)$, where~$V = \{a,b,c,d,e\}$ and~$E = \{(a,b), (a,c), (b,c), (b,d), (c,e), (d,e), (d,f)\}$. The figure shows the monomials associated with each edge; namely, we have~$\trinomials = \{ x_a   x_b    y, x_a   x_c   y, x_b   x_c   y, x_b   x_d   y, x_c   x_e   y, x_d   x_e   y, x_d   x_f   y\}$.  The construction of the 3-\polyopt{} instance is complete with~$\pairs = \{x_a   y, x_b   y, x_c   y, x_d   y, x_e   y, x_f   y \}$. An optimal solution for the vertex cover instance is the set~$\{b,d\}$, whereas~$\{x_b   y, x_d   y \}$ is the optimal solution for the associated 3-\polyopt{} instance.}
    \label{fig:nphardnessexample}
\end{figure}

Any optimal solution~$V'$ for~$I$ is associated with a set of elements~$\pairs'$ in~$\pairs^r$ that cover each element of~$\trinomials^r$. In particular, the one-to-one relationships between~$V$ and~$\pairs^r$ and between~$E$ and~$\trinomials^r$ naturally extends to the coverage of edges by vertices in~$G$ and the coverage of triples by pairs in~$(\pairs^r,\trinomials^r)$. Therefore, it follows that the 3-\polyopt{} is NP-hard.
\end{proof}

 \begin{proof}{Proof of Theorem~\ref{thm:fpt}:} The structural properties of the reduced problem allow us to show that the 3-\polyopt{} is fixed-parameter tractable in the sizer~$k$ of the linearization;  
we denote this parameterized decision problem as~$(G^r,k)$. First, we show the adaptation of the kernelization procedure proposed by~\citet{buss1993nondeterminism} for the vertex cover problem applies to the 3-\polyopt{}.

\begin{lemma}[Rule 6]\label{rule6} If~$G^r$ contains an element~$\pair$ in~$\pairs$ with degree greater than or equal to~$k+1$, remove~$\pair$ and its neighbors and solve~$(G^r - \pair,k-1)$.
\end{lemma}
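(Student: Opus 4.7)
The plan is to adapt the classical Buss kernelization argument for Vertex Cover, using~\ref{property3} and~\ref{property4} of Proposition~\ref{prop:structurereduced} as the structural backbone of the reduction. The key observation is that if $\pair$ has many neighbors in $\trinomials^r$, then \emph{not} selecting $\pair$ forces the solution to include so many other pairs that the cardinality budget $k$ is exceeded.

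First, I would assume for contradiction that there exists a feasible linearization $S \subseteq \pairs^r$ of size at most $k$ with $\pair \notin S$. By~\ref{property4}, for every neighbor $\trinomial$ of $\pair$ in $\trinomials^r$, the set $S$ must contain a distinct pair $\pair' \in \pairs(\trinomial)$ that covers $\trinomial$. The next step is to argue that these substitute pairs are all \emph{distinct}: if a single $\pair' \in S$ covered two different neighbors $\trinomial_1,\trinomial_2$ of $\pair$, then $\{\pair,\pair'\}$ and $\{\trinomial_1,\trinomial_2\}$ would form a $K_{2,2}$ in $G^r$, contradicting~\ref{property3}. Consequently $|S| \geq \deg(\pair) \geq k+1$, contradicting $|S|\leq k$. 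Hence every solution of size at most $k$ contains $\pair$.

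Once $\pair$ is forced into the solution, every $\trinomial \in \pairs(\trinomial)$ wait, every $\trinomial$ adjacent to $\pair$ is covered by $\pair$ itself, so these elements can be removed from $\trinomials^r$; the pair $\pair$ can be removed from $\pairs^r$ as it has been committed. This yields exactly the instance $(G^r - \pair, k-1)$, whose feasibility is equivalent to that of $(G^r,k)$ by the forcing argument above. I would close the proof by noting that if $\pair$ has no neighbors left after removal (or the resulting graph is empty) the reduction still proceeds correctly, and that the budget decrement by one accounts for the single commitment to $\pair$.

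The only delicate point is the distinctness argument in the first paragraph; it is really where~\ref{property3} (being $K_{2,2}$-free) and~\ref{property4} must be combined. Everything else is essentially the standard Buss trick transplanted from vertices/edges of $G$ to $\pairs^r/\trinomials^r$ in $G^r$. No new machinery beyond the already-established properties of the kernel is needed.
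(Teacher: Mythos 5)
Your proof is correct and follows essentially the same route as the paper: it is the Buss-style forcing argument, with \ref{property4} (whose distinctness content the paper had already derived from the $K_{2,2}$-freeness in \ref{property3}) forcing $\pair$ into any solution of size at most $k$, and the straightforward conversion of certificates between $(G^r,k)$ and $(G^r-\pair,k-1)$. The only difference is presentational --- you re-derive the ``each $\pair'\neq\pair$ covers at most one neighbor of $\pair$'' step from \ref{property3} inside the proof, whereas the paper simply cites \ref{property4}; also tidy the stray ``wait'' in your second paragraph before submitting.
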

\begin{proof} This result follows from~\ref{property4}. Namely, if~$\pair$ has degree greater than or equal to~$k+1$, than any solution for the 3-\polyopt{} that does not contain~$\pair$ must contain at least~$k+1$ elements of~$\pairs \setminus \{\pair\}$ to cover its neighborhood. Similarly, any certificate showing that~$(G^r - \pair,k-1)$ is an ``yes'' instance can be efficiently converted in a ``yes'' certificate for~$(G^r,k)$.
\end{proof}

The deletion of a vertex~$\pair$ may affect all the elements in~$\trinomials$ as well as the elements in~$\pairs$, so the application of Rule 6 takes time~$O(|\pairs|(|\pairs|+|\trinomials|))$. Our fixed-parameter tractable procedure to solve an instance~$(G^r,k)$ of the 3-\polyopt{} consists of the application of Rules 1, 2, 3, 4, and 6; observe that, in addition to Rule 6, Rules 1 and 3 may also change (decrease) the value of~$k$. We can omit Rule 5 for the decision version of the problem.

 First, we claim that if~$(G^r,k)$ is a ``yes'' instance, then $|E| \leq k^2$. If Rule 6 (Proposition~\ref{rule6}) cannot be applied, all vertices in~$\pairs$ have at most~$k$ neighbors in~$\trinomials$. As at most~$k$ vertices of~$\pairs$ may be selected and, consequently, at most~$k^2$ vertices of~$\trinomials$ can be covered, it follows that~$|E| \leq k^2$. 

Next, we claim that if~$(G^r,k)$ is a ``yes'' instance, then $|\trinomials| \leq k^2/2$ and $|\pairs| \leq k^2/2$. From~\ref{property1}, each element of~$\trinomials$ must have at least 2 neighbors in~$\pairs$, so~$|\trinomials| \leq k^2/2$. Similarly, as~\ref{property2} shows that each element of~$\pairs$ has at least 2 neighbors in~$\trinomials$, it follows that~$|\pairs| \leq k^2/2$.

The exhaustive application of Rules 1, 2, 3, 4, and 6 can be performed in polynomial time. Namely, in each step, at least one vertex is removed, so in the worst case we have~$O( (|\pairs|+|\trinomials|)(2|\pairs| + |\trinomials| + |\trinomials|^2) + |\pairs|^2 + |\pairs||\trinomials|) = O( (|\pairs|+|\trinomials|)(|\pairs|^2+|\trinomials|^2)) = O(w^3)$, where~$w = |\pairs| + |\trinomials|$ represents the size of the instance. A bounded search tree on the kernel needs time~$T(w,k) = O(3^k n)$; each vertex in~$\trinomials$ has at most 3 neighbors, and a vertex of~$\pairs$ can be removed (with its neighbors in~$\trinomials$) in time~$O(w)$. As~$w = O(k^2)$ after the kernelization procedure, the brute-force procedure consumes time~$O(3^k   k^2)$. In total, the algorithm consumes time~$O(w^3 + 3^k   k^2)$ = $O(k^6 + 3^k   k^2)$, and therefore the 3-\polyopt{} is fixed-parameter tractable. 
\end{proof}

\end{document}